\newtheorem{theoremABC}{Theorem}
\newtheorem{conjectureABC}[theoremABC]{Conjecture}
\theoremstyle{definition}
 \newtheorem{defi}{Definition}[section]
\theoremstyle{remark}
 \newtheorem{remark}[defi]{Remark}
\theoremstyle{plain}
\newtheorem{prop}[defi]{Proposition}
\newtheorem{lemma}[defi]{Lemma}
\newtheorem*{acknow}{Acknowledgments}
\def\e#1\e{\begin{equation}#1\end{equation}}
\def\ea#1\ea{\begin{align}#1\end{align}}
\numberwithin{equation}{section}
\newcommand{\zz}{\mathbb{Z}}
\newcommand{\rr}{\mathbb{R}}
\newcommand{\Z}[1]{\zz_{#1}}
\newcommand{\abs}[1]{\left\vert #1 \right\vert}
\newcommand{\Acal}{\mathcal{A}}
\newcommand{\Bcal}{\mathcal{B}}
\newcommand{\Ccal}{\mathcal{C}}
\newcommand{\Dcal}{\mathcal{D}}
\newcommand{\Fcal}{\mathcal{F}}
\newcommand{\Gcal}{\mathcal{G}}
\newcommand{\Mcal}{\mathcal{M}}
\newcommand{\Ncal}{\mathcal{N}}
\newcommand{\Wcal}{\mathcal{W}}
\newcommand{\Xcal}{\mathcal{X}}
\newcommand{\Lbb}{\mathbb{L}}
\newcommand{\Uund}{\underline{U}}
\newcommand{\Dund}{\underline{D}}
\newcommand{\Lund}{\underline{L}}
\newcommand{\kund}{\underline{k}}
\newcommand{\lund}{\underline{l}}
\newcommand{\xund}{\underline{x}}
\renewcommand{\Vert}{\mathrm{Vert}}
\newcommand{\Edges}{\mathrm{Edges}}
\newcommand{\IntEd}{\mathrm{IntEd}}
\newcommand{\Leaves}{\mathrm{Leaves}}
\newcommand{\Roots}{\mathrm{Roots}}
\newcommand{\Ainf}{$A_\infty$}
\newcommand{\One}{\underline{1}}
\newcommand{\Fuk}{\mathcal{F}uk}
\newcommand{\Symp}{\mathcal{S}ymp}
\newcommand{\Ham}{\mathcal{H}am}
\newcommand{\fBialg}{f\text{-}\mathcal{B}ialg}
\newcommand{\dCat}{d\text{-}\mathcal{C}at}
\newcommand{\Cob}{\mathcal{C}ob}
\newcommand{\arnaque}{\begin{flushright}
$\Box$
\end{flushright}}
\newcommand{\OSz}{Ozsv{\'a}th and  Szab{\'o}}
\newcommand{\MW}{Manolescu and Woodward}
\newcommand{\WW}{Wehrheim and Woodward}
\title[Bialgebras, and Lie monoid actions in Morse and Floer theory, I]{Bialgebras, and Lie monoid actions in Morse and Floer theory, I}
\author{Guillem Cazassus}
\address{Centre for Quantum Mathematics, 
University of Southern Denmark}
\email{g.cazassus@gmail.com}
\author{Alexander Hock}
\address{Institut für Mathematik, Heidelberg University}
\email{alexander.hock@uni-heidelberg.de}
\author{Thibaut Mazuir}
\address{Institut für Mathematik, Humboldt Universität zu Berlin}
\email{thibaut.mazuir@hotmail.fr}
\thanks{GC was funded by EPSRC grant reference EP/T012749/1, the Simons Collaboration grant no. 994320, and the ERC-SyG project ReNewQuantum. AH’s work is funded by the German Research Foundation (Deutsche Forschungsgemeinschaft, DFG) through a Walter-Benjamin fellowship, project number 551922044. TM was funded by the Deutsche Forschungsgemeinschaft (DFG, German Research Foundation) under Germany's Excellence Strategy – The Berlin Mathematics Research Center MATH+ (EXC-2046/1, project ID: 390685689).}
\begin{document}

\begin{abstract}
We introduce a new family of oriented manifolds with boundaries called the forest biassociahedra and forest bimultiplihedra, generalizing the standard biassociahedra. 
They are defined as moduli spaces of ascending-descending biforests and are expected to act as parameter spaces for operations defined on Morse and Floer chains in the context of compact Lie group actions.

We study the structure of their boundary, and derive some algebraic notions of ``$f$-bialgebras'', as well as related notions of bimodules, morphisms and categories. This allows us to state some conjectures describing compact Lie group actions on Morse and Floer chains, and on Fukaya categories.

\end{abstract}

\maketitle
\tableofcontents

\newpage

\section{Introduction}
\label{sec:intro}

Consider a compact Lie group $G$, or more generally a Lie monoid\footnote{by which we mean a closed smooth manifold with an associative product}, acting on a smooth manifold $X$ (resp. a symplectic manifold $M$). How does its action interact with  the multiplicative structures of Morse theory of $X$ (resp. Floer theory of $M$)? 
The goal of this paper is to introduce a new ad hoc algebraic framework  in order to answer this question.

Our construction starts with the following simple observation. The homology $H_*(G)$ has the structure of a bialgebra (and if $G$ is a group, a Hopf algebra): its product is the Pontryagin product, which captures the group structure and is induced by the multiplication; and its coproduct is dual to the cup product and induced by the diagonal $G\to G\times G$, which is the classical counterpart of the Donaldson product (the multiplicative structure in Floer theory).

We produce a chain-level counterpart of this structure, called \emph{forest bialgebra}, or \emph{f-bialgebra}, that appears naturally on Morse chains. We use the word ``forest'' in order to emphasize that this structure is defined in terms of combinatorics of forests and not only trees as for standard algebraic operads.

By studying some moduli spaces of metric ``biforests'', we introduce 
some algebraic notions that lead us to a rigorous algebraic framework allowing us to formulate the $G$-action at the chain level. As forests are involved, operations will be indexed by multi-indices $\kund = (k_1, \ldots , k_a)$, $\lund = (l_1, \ldots , l_b)$, where $k_i, l_i\geq 1$ are integers, as opposed to just pairs of integers. 
The various notions in the conjectures below will be introduced in Section~\ref{sec:bialg_bimod_mph}. Loosely speaking, these are all chain-level notions that, on the level of homology, induce the standard notions suggested by their names. The prefix $u$- (resp. $d$-) stands for ``up'' (resp. ``down''), and refers to ascending forests corresponding to $\kund$, related to the product structure (resp. descending forests corresponding to $\lund$, related to the coproduct structure).

\begin{conjectureABC}[Lie monoid actions in Morse theory]\label{conj:Morse}
Assume that:

\noindent\begin{minipage}{.2\textwidth}\begin{tikzcd}
   G \ar{r}{\varphi}& G' \\
   X \ar{r}{f} \ar[,loop ,out=123,in=57,distance=2.5em]{}{} \ar[,loop ,out=-123,in=-57,distance=2.5em]{}{}& X' \ar[,loop ,out=123,in=57,distance=2.5em]{}{} \ar[,loop ,out=-123,in=-57,distance=2.5em]{}{}\\
   H \ar{r}{\psi}& H'
\end{tikzcd}
\end{minipage}
\begin{minipage}{.7\textwidth}
\begin{itemize}
\item $G,G',H,H'$ are compact Lie monoids,
\item $X,X'$ are closed smooth manifolds, acted on by $G\times H$ and $G'\times H'$ respectively,
\item $\varphi, \psi$ are morphisms of Lie monoids,
\item $f\colon X \to X'$ is $(\varphi, \psi)$ bi-equivariant.
\end{itemize}
\end{minipage}

Then, at the level of Morse complexes\footnote{When writing the Morse complex of a manifold, we leave implicit some auxiliary choices (Morse functions, perturbations, ...)}:
\begin{enumerate}
\item $CM(G),CM(G'),CM(H),CM(H')$ are f-bialgebras,
\item $CM(X),CM(X')$ are $u$-bimodules (of f-bialgebras),
\item $\varphi, \psi$ induce morphisms of f-bialgebras 
\ea
\varphi_* &\colon CM(G)\to CM(G'), \\
\psi_* &\colon  CM(H)\to CM(H').
\ea
\item $f$ induces a $(\varphi_*, \psi_*)$ bi-equivariant morphism of $u$-bimodules 
\e
f_*\colon CM(X)\to CM(X').
\e
\end{enumerate}
\end{conjectureABC}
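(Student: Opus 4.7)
The plan is to realize all the operations predicted by this conjecture by counting rigid configurations of perturbed Morse gradient trees parameterized by the forest biassociahedra $\Ssq_{\kund, \lund}$ (and their morphism variants, the forest bimultiplihedra) introduced earlier. Concretely, fix auxiliary Morse--Smale data on each of $G$, $G'$, $H$, $H'$, $X$, $X'$. To a biforest $T$ with $k$ ascending leaves and $l$ descending roots, and to a chosen metric on $T$, associate the moduli space of continuous maps $T \to G$ which, on each edge, are perturbed negative gradient flow lines of the chosen Morse function, and which at each internal vertex of ascending valence $p$ and descending valence $q$ satisfy the constraint that the $q$ outgoing points equal the iterated product $G^p \to G$ of the $p$ incoming points (followed by the diagonal $G \to G^q$ in the descending direction). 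Counting the zero-dimensional components of these moduli spaces, with appropriate signs, defines an operation $CM(G)^{\otimes k} \to CM(G)^{\otimes l}$ for every $(\kund, \lund)$.

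The second step is to perform the analogous construction for the manifold $X$. Since $X$ is acted on by $G \times H$, the relevant configurations involve biforests whose internal edges are labelled by $G$, $H$, or $X$, with the vertex constraints now given by the action map $G \times H \times X \to X$ together with the multiplications on $G$ and $H$. This yields, for each multi-index pair, an operation of the appropriate arity on $CM(X)$, and these are exactly the structure maps of a $u$-bimodule over the pair of f-bialgebras $(CM(G), CM(H))$. The morphism data $(\varphi_*, \psi_*, f_*)$ is obtained in the same way by replacing the spaces $\Ssq_{\kund, \lund}$ with the forest bimultiplihedra: these moduli parameterize biforests carrying a single ``color change'' locus, and the corresponding configurations combine gradient trajectories in the source spaces with those in the target spaces, glued across the locus by $\varphi$, $\psi$, or $f$.

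The third step is to translate the boundary structure of the moduli spaces into the algebraic relations defining f-bialgebras, $u$-bimodules, morphisms, and bi-equivariance. Standard Morse-theoretic compactness and gluing identifies the codimension-one boundary of the compactified moduli spaces with two kinds of strata: degenerations of the underlying metric biforest (contributing the internal differential $\partial$ on $\Ssq_{\kund, \lund}$, to be analyzed using the results of the previous sections of the paper), and breakings of gradient trajectories (contributing the Morse differential composed with the operation). The vanishing of the signed count of the boundary of a one-dimensional moduli space then yields, for each $(\kund, \lund)$, precisely the $\partial$-equation that the operations must satisfy to define an f-bialgebra, a $u$-bimodule, a morphism of f-bialgebras, or a bi-equivariant morphism of $u$-bimodules, respectively. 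The four statements in the conjecture then correspond to the four flavors of moduli space used.

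The main obstacle is analytical rather than combinatorial: one must develop a transversality theory for these vertex-constrained forest moduli with sufficient flexibility to make the counts well-defined over $\zz$ (or at least $\zz/2$), and then show that the SFT-style compactification has exactly the expected codimension-one strata, with no hidden contributions from sphere bubbling at vertices or from the non-compactness of $X$ before the action is quotiented out. Because the vertex constraints are imposed via the smooth maps $m_G$, $m_H$, $a$, $\varphi$, $\psi$, $f$ --- which are not generic --- one expects to need equivariant perturbations of the gradient trajectories near each vertex in the spirit of the abstract perturbation schemes used in Morse--Bott theory; carrying this out rigorously, together with the matching of boundary strata to the cellular boundary operator on $\Ssq_{\kund, \lund}$ described in the body of the paper, is the substantive analytic content of the conjecture and the reason it is stated here only as a conjecture rather than a theorem.
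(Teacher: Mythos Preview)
The statement you are trying to prove is labeled \emph{Conjecture~A} in the paper, and the paper does not contain a proof of it. The authors state explicitly that ``the first author plans to prove Conjecture~\ref{conj:Morse} and \ref{conj:Floer} with due details in forthcoming work,'' and the body of the paper only supplies the combinatorial and polytopal ingredients (the spaces $K^{\kund}_{\lund}$, $J^{\kund}_{\lund}$, their orientations, and the resulting algebraic definitions), together with informal hints in Section~\ref{ssec:idea_constr} and Section~\ref{ssec:geom_realiz} about the intended Morse-theoretic construction. So there is no ``paper's own proof'' against which to compare your attempt.

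That said, your outline is in good agreement with those hints: the paper indeed envisions defining $\alpha^{\kund}_{\lund}$ by counting rigid flow graphs in $G$ with domain $\abs{U}\times_{\rr}\abs{D}$ (the Henriques intersection picture), with vertex conditions coming from the multiplication $m$ and the diagonal $\Delta$; the bimodule operations arise from the analogous count in $X$ with some edges labelled by $G$, $H$ and vertex conditions coming from the actions; and the morphism structures come from the bimultiplihedra $J^{\kund}_{\lund}$ via a grafting level. Your identification of the two codimension-one boundary contributions (Morse breaking versus domain degeneration) matches exactly the description in Section~\ref{ssec:idea_constr}, and your final paragraph correctly diagnoses why this remains a conjecture rather than a theorem.

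Two small corrections. First, the parameter spaces in the paper are denoted $K^{\kund}_{\lund}$ and $J^{\kund}_{\lund}$; the symbol $\Ssq_{\kund,\lund}$ you use does not appear. Second, your description of a biforest as having ``$k$ ascending leaves and $l$ descending roots'' blurs the picture slightly: the ascending forest of type $\kund$ has $\abs{\kund}$ leaves and $a=n(\kund)$ roots, and the descending forest of type $\lund$ is a separate object with $\abs{\lund}$ leaves and $b=n(\lund)$ roots; the graph one maps to $G$ is their fibered product over the height $\rr$, not a single forest with mixed leaves and roots.
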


We now state the Floer theoretic counterpart of Conjecture~\ref{conj:Morse}. We loosely define a \emph{nice enough} symplectic manifold or Lagrangian to be a symplectic manifold or Lagrangian satisfying some standard assumptions that prevent bubbling (exact, monotone...). We say that a Lagrangian correspondence $\Lambda\subset M^-\times M'$ is \emph{well-composable on the left} if for any nice enough $L\subset M$, the composition $L\circ\Lambda\subset M'$ is nice enough, and embedded in the sense of Wehrheim and Woodward \cite{WWcompo}.

\begin{remark}We choose to work with \emph{chain} complexes, as opposed to \emph{cochain} complexes, in order to work in a covariant setting. In particular a group action leads to a module structure at the chain level, as opposed to a comodule structure. The Fukaya category, as ordinarily defined, involves Floer \emph{cochains}. Therefore, what used to be composition operations become ``decomposition operations'' in our setting, i.e. one gets an \Ainf\ ``co-category''. One can easily get dual similar statements for cochains by exchanging the roles of $\kund$ and $\lund$.
\end{remark}

\begin{conjectureABC}[Lie monoid actions in Floer theory]\label{conj:Floer}
Assume that:

\noindent\begin{minipage}{.3\textwidth}\begin{tikzcd}
 &  G \ar{r}{\varphi}& G' \\
L_0, L_1 \ar[hookrightarrow]{r}{}&  M \ar{r}{\Lambda} \ar[,loop ,out=123,in=57,distance=2.5em]{}{} \ar[,loop ,out=-123,in=-57,distance=2.5em]{}{}& M' \ar[,loop ,out=123,in=57,distance=2.5em]{}{} \ar[,loop ,out=-123,in=-57,distance=2.5em]{}{}\\
 &  H \ar{r}{\psi}& H'
\end{tikzcd}
\end{minipage}
\begin{minipage}{.7\textwidth}
\begin{itemize}
\item $G,G',H,H',\varphi, \psi$ are as above.
\item $M,M'$ are nice enough symplectic manifolds, acted on symplectically by $G\times H$ and $G'\times H'$ respectively,
\item $L_0, L_1\subset M$ is a pair of $(G\times H)$-invariant nice enough Lagrangians,
\item $\Lambda\subset M^-\times M'$ is a $(G\times H)$-invariant Lagrangian correspondence well-composable on the left.
\end{itemize}
\end{minipage}
Then:
\begin{enumerate}
\item $CF(L_0), CF(L_1)$ are $u$-bimodules,
\item $CF(L_0, L_1)$ is a $\square$-quadrimodule, with
\e
\square = \begin{pmatrix}
CM(G) & CM(G) & CM(G) \\ 
CF(L_0) & . & CF(L_1) \\ 
CM(H) & CM(H) & CM(H) 
\end{pmatrix} .
\e
\item The sub-categories of the \emph{Fukaya co-categories}  $\Fuk(M), \Fuk(M')$ consisting in invariant Lagrangians can be upgraded to $u$-bimodule $d$-categories over $(CM(G), CM(H))$, resp. $(CM(G'), CM(H'))$.
\item The \Ainf -functor  $\Phi_{\Lambda}\colon \Fuk(M)\to \Fuk(M')$ \cite{MauWehrheimWoodward,Fukaya_functor} can be upgraded to a $(\varphi_*, \psi_*)$-equivariant functor of $u$-bimodule $d$-categories.
\end{enumerate}
\end{conjectureABC}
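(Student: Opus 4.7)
The plan is to mirror Conjecture~\ref{conj:Morse} by producing the required operations as counts of pseudoholomorphic curves decorated by gradient flow data in $G$ and $H$, with combinatorial structure governed by ascending--descending biforests. To each pair of multi-indices $(\kund,\lund)$ one associates a moduli space consisting of: a pseudoholomorphic curve (strip, disk, or quilted strip) with the prescribed Lagrangian boundary conditions, a collection of Morse trajectories in $G$ and $H$ organized into a biforest, and attaching data using the $(G\times H)$-action to couple the Morse side to marked boundary points of the curve. The edge lengths of the biforest are matched with positions of boundary marked points, so that the total moduli fibers over a product of a forest biassociahedron (or bimultiplihedron) and a Floer moduli, and its codimension-$1$ boundary can be read off from the boundary strata of both factors.

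For part~(1), this produces operations
\e
\mu^{\kund,\lund}\colon CM(G)^{\otimes|\kund|} \otimes CF(L_i) \otimes CM(H)^{\otimes|\lund|} \longrightarrow CF(L_i),
\e
whose $u$-bimodule relations follow by splitting the codimension-$1$ boundary into Floer breakings, Morse breakings in $CM(G)$ or $CM(H)$, and biforest degenerations. For part~(2), one enlarges the picture to strips from $L_0$ to $L_1$; the three columns of $\square$ reflect the three zones on the boundary of such strips where biforest attachments can sit (along the $L_0$-side, at the intersection points, along the $L_1$-side), while the two outer rows correspond to the monoids $G$ and $H$. Part~(3) is obtained by combining the $A_\infty$-operations of the Fukaya (co-)category---parametrizing holomorphic polygons with $(G\times H)$-invariant Lagrangian boundaries---with biforest attachments at each marked point, the boundary analysis then yielding the $u$-bimodule $d$-category axioms.

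For part~(4), one takes the Mau--Wehrheim--Woodward quilted polygons defining $\Phi_\Lambda$ and decorates them with biforests whose trajectories attach on both sides of the seam via the $(G\times H)$- and $(G'\times H')$-actions, using bi-equivariance of $\Lambda$ and the morphisms $\varphi,\psi$ to interlink the $M$- and $M'$-sides. Well-composability of $\Lambda$ on the left is used to ensure that strip-shrinking limits of the decorated quilted moduli spaces remain tame, so that the codimension-$1$ boundary encodes the required equivariance of $\Phi_\Lambda$ with respect to $(\varphi_*,\psi_*)$ as a morphism of $u$-bimodule $d$-categories.

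The main obstacle is analytic and has three layers. First, to exploit the $(G\times H)$-action one would like to use equivariant auxiliary data (almost complex structures, Hamiltonian perturbations), which generically fails to achieve transversality; a Morse--Bott or Seidel--Smith-style equivariant perturbation scheme compatible with the biforest parameter must be developed, together with appropriate orientation conventions. Second, identifying the codimension-$1$ boundary of the decorated moduli spaces with the boundary strata of the forest biassociahedra and bimultiplihedra requires a gluing theorem allowing biforest collisions to interleave coherently with Floer and Morse breakings. Finally, for part~(4) the compatibility between strip-shrinking \`a la Wehrheim--Woodward and biforest degenerations is expected to be the most delicate point, and is likely where the asymmetric ``well-composable on the left'' hypothesis becomes essential: one must show that no new boundary stratum appears when a biforest collision occurs simultaneously with a geometric composition of $L_i$ with $\Lambda$.
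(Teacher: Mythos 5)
This statement is Conjecture~B of the paper: it is not proven there, and the authors explicitly defer its proof to forthcoming work. What the paper does provide is the intended geometric input (Section~\ref{ssec:geom_realiz}): the operations are to be defined by counting maps from \emph{hybrid graphs} $H(U,D)=\abs{U}^{\mathrm{Morse}}\times_{\rr}\langle D\rangle\cup\abs{U}^{\mathrm{Floer}}\times_{\rr}\langle\!\langle D\rangle\!\rangle$, parametrized by the spaces $K^{\kund}_{\lund}$, $J^{\kund}_{\lund}$. Your overall strategy --- decorated moduli spaces fibering over the forest biassociahedra and bimultiplihedra, with the algebraic relations read off from codimension-one boundary strata, and the honest acknowledgement that equivariant transversality, gluing, and compatibility with strip-shrinking are the open analytic points --- is consistent with that plan, and your identification of where well-composability on the left enters is apt.

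However, there are two concrete discrepancies. First, the operation you display for part~(1),
\begin{equation*}
\mu^{\kund,\lund}\colon CM(G)^{\otimes\abs{\kund}}\otimes CF(L_i)\otimes CM(H)^{\otimes\abs{\lund}}\longrightarrow CF(L_i),
\end{equation*}
has the wrong shape: it treats $\lund$ as indexing the $CM(H)$-inputs and produces a single output. In the paper's Definition~\ref{def:kund_bimod}, \emph{both} the $CM(G)$- and $CM(H)$-inputs sit inside the split ascending multi-index $\kund^l\,|\epsilon|\,\kund^r$, while $\lund$ is the \emph{descending} (coproduct) multi-index, so that the target is $(CM(G)^{\abs{\lund}})^{a^l}\otimes CF(L_i)^{\epsilon\abs{\lund}}\otimes (CM(H)^{\abs{\lund}})^{a^r}$, with multiple outputs. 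What you describe is essentially an $A_\infty$-bimodule over two $A_\infty$-algebras; it omits the coproduct direction whose interaction with the product is the entire point of the $f$-bialgebra formalism (the Hopf-type relations $(R^{\kund}_{\lund})$). Geometrically, this is the direction realized by replacing the strip with the shredded strips $\langle\!\langle D\rangle\!\rangle$, which your sketch never addresses. Second, the coupling between the Morse trajectories in $G,H$ and the holomorphic curve is not, in the paper's picture, an attachment at isolated boundary marked points: the Morse edges attach along interior \emph{seams}, with the matching condition $\gamma(0)\cdot u_-(0,t)=u_+(0,t)$ imposed for all $t\in[0,1]$ (equation~(\ref{eq:hybrid_vert_cond})). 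Any proof along your lines would need to be rebuilt on these two corrected foundations before the boundary analysis can even be set up.
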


\begin{remark}In Conjecture~\ref{conj:Floer}, (1) and (2) are consequences of (3).
\end{remark}

\begin{remark}As for non-equivariant Floer theory, if one wants to allow more general symplectic manifolds and (possibly immersed) Lagrangians, and drop the ``nice enough'' and ``well-composable at the left'' assumptions, one must work with curved structures. Curved analogs in our setting should also exist, and should include operations for which entries of the multi-index $\lund$ can be zero. We expect a (by no means straightforward) parallel story as in 
\cite{FOOObook1,FOOObook2}, with bounding chains, etc... 
\end{remark}

In the setting of Hamiltonian actions, one can replace Morse complexes by wrapped Fukaya categories. This enables extending the class of objects to possibly non-invariant Lagrangians, and is perhaps more closely related to Teleman's approach, see Section~\ref{ssec:rel_work}. Since composition with the Weinstein correspondence might lead to immersions, one needs to allow immersions here, and therefore ``$d$-curved structures''.

\begin{conjectureABC}[Hamiltonian actions in Floer theory]\label{conj:wrapped}
Assume now that in the setting of Conjecture~\ref{conj:Floer}  the actions are Hamiltonian, then:

\begin{enumerate}
\item The wrapped Fukaya categories $\Wcal (T^* G), \Wcal (T^* H)$ are $d$-\emph{curved} $d$-categories.
\item The morphisms $\varphi, \psi$ induce $d$-functors
\ea
\varphi_* &\colon \Wcal (T^* G) \to \Wcal (T^* G'), \\
\psi_*  &\colon  \Wcal (T^* H)\to  \Wcal (T^* H').
\ea
\item  The whole co-category $\Fuk(M)$ (including non-invariant Lagrangian immersions) is a $d$-\emph{curved} $(\Wcal (T^* G), \Wcal (T^* H))$ $u$-bimodule $d$-categories.
\item The \Ainf -functor  $\Phi_{\Lambda}\colon \Fuk(M)\to \Fuk(M')$ can be upgraded to a $(\varphi_*, \psi_*)$-equivariant functor of $u$-bimodule $d$-categories.
\end{enumerate}
\end{conjectureABC}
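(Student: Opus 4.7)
The plan is to bootstrap from Conjecture~\ref{conj:Floer} by replacing each Morse complex $CM(G)$ with the wrapped Fukaya category $\Wcal(T^*G)$, using cotangent lifts and the Weinstein action correspondence to encode the Hamiltonian structure, and absorbing the loss of embeddedness of composed correspondences into $d$-curvature terms. Throughout, the structure maps would be defined via quilted Floer theory with seams coming from Lagrangian correspondences canonically associated to the Lie monoid data.

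For (1), I would construct the $d$-curved $d$-category structure on $\Wcal(T^*G)$ from the biforest operations applied to Lagrangian correspondences attached to the Lie monoid structure of $G$: the multiplication $m\colon G\times G \to G$ yields a correspondence $\Lambda_m \subset T^*G^-\times T^*G^-\times T^*G$ (the cotangent lift of $\mathrm{graph}(m)$), the unit produces an identity correspondence, and the diagonal $\Delta_G\colon G\to G\times G$ produces a coproduct correspondence. Operations indexed by biforests with multi-indices $(\kund, \lund)$ are then defined as counts of quilted disks with seams labelled by appropriate combinations of these correspondences, weighted by the corresponding forest biassociahedra and bimultiplihedra. For (2), the morphism $\varphi$ induces a correspondence $\Lambda_\varphi \subset T^*G^-\times T^*G'$ coming from the graph of $\varphi$, and the $d$-functor $\varphi_*$ is defined by the same quilted construction with an additional $\Lambda_\varphi$-seam; the functor axioms reflect compatibility of $\Lambda_\varphi$ with $\Lambda_m, \Lambda_{m'}$ under geometric composition, mirroring the $f$-bialgebra morphism axioms.

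For (3), the Hamiltonian $(G\times H)$-action on $M$ yields via the moment map a Weinstein action correspondence $W_{G,M} \subset T^*G^-\times M^-\times M$ (and similarly $W_{H,M}$), with the property that for $G$-invariant $L$ the geometric composition $W_{G,M}\circ L$ recovers $L$. For a general, possibly non-invariant Lagrangian the composition is typically immersed, and the $u$-bimodule $d$-category structure on $\Fuk(M)$ over $(\Wcal(T^*G), \Wcal(T^*H))$ is then defined by quilted Floer operations with $W_{G,M}$ and $W_{H,M}$ as seams, with self-intersection strata of the composed correspondences producing the $d$-curvature. For (4), the $(G\times H)$-equivariance of $\Lambda$ translates into an isomorphism of geometric compositions relating $W_{G,M}$, $\Lambda$, and $W_{G',M'}$, which through the \MW\ machinery of \cite{MauWehrheimWoodward,Fukaya_functor} upgrades $\Phi_\Lambda$ to a $(\varphi_*,\psi_*)$-equivariant functor of $u$-bimodule $d$-categories.

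The main obstacle will be the systematic treatment of the $d$-curvature arising from non-embedded compositions of Lagrangian correspondences once $G$-invariance is dropped. Verifying the higher coherences amounts to a boundary analysis of quilted moduli spaces parametrized by forest biassociahedra and bimultiplihedra (and their curved analogs), showing that every codimension-one degeneration either matches a composition of the defined operations or factors through strip/disk bubbling off a self-intersection of a composed correspondence, and gets captured by a curvature term. In parallel with the bounding cochain formalism of \cite{FOOObook1,FOOObook2} in the \Ainf\ setting, this will likely require an obstruction-theoretic construction of consistent perturbation data, adapted to biforest combinatorics, to ensure transversality and to realize all bubbling contributions as honest chain-level operations with values in the relevant $d$-curvature spaces.
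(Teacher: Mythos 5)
This statement is Conjecture~C of the paper, not a theorem: the authors give no proof, explicitly defer it to forthcoming work, and in this paper only supply the algebraic target notions (Definitions~\ref{def:lund_cat}--\ref{def:fun_kund_bimod_lund_cat}) and the intended geometric realization, namely the pseudo-holomorphic foams with domain $F(U,D)=\abs{U}\times_{\rr}\langle\!\langle D\rangle\!\rangle$ with seams decorated by the Weinstein correspondences $\Lambda_G(M)\subset T^*G\times M^-\times M$ and $\Lambda_G(T^*G)$. Your proposal correctly reconstructs this intended strategy --- correspondences from $m$, $\Delta$, $\varphi$, and the moment map; quilted counts parametrized by $K^{\kund}_{\lund}$ and $J^{\kund}_{\lund}$; curvature absorbing non-embedded compositions --- so as an outline it is faithful to what the paper has in mind.

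However, it is an outline and not a proof, and the gaps it leaves open are exactly the ones that make this a conjecture. First, the notion of ``$d$-curved $d$-category'' is not defined anywhere in the paper (Definition~\ref{def:lund_cat} is explicitly only the uncurved version, and the authors remark that the curved analogs should involve operations with entries of $\lund$ equal to zero and a bounding-chain formalism parallel to \cite{FOOObook1,FOOObook2}); you cannot prove statements (1) and (3) before that algebraic structure is pinned down, and your phrase ``self-intersection strata \ldots producing the $d$-curvature'' presupposes a construction that does not yet exist. Second, the analytic foundations are entirely missing: compactness and transversality for foams whose seams carry non-embedded (immersed) compositions of correspondences is precisely the figure-eight/strip-shrinking problem that obstructs the Wehrheim--Woodward program in general, and asserting that ``every codimension-one degeneration \ldots gets captured by a curvature term'' is a restatement of the difficulty rather than a resolution of it. Third, the wrapped setting introduces additional issues (non-compactness of $T^*G$, compatibility of wrapping with the biforest operations and with the monoid structure on objects required by Definition~\ref{def:lund_cat}) that your sketch does not address. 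So your proposal should be read as a correct identification of the intended proof architecture, with the substantive content still to be supplied.
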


The first author plans to prove Conjecture~\ref{conj:Morse} and \ref{conj:Floer} with due details in forthcoming work \cite{morse_fun,biass2}. In Section\ref{ssec:idea_constr} and throughout the paper we have included some hints for the Morse and Floer experts that connect our constructions with these conjectures. 
Except for these hints, we hope that this paper is accessible without prior knowledge in Morse and Floer theory.

\begin{remark}These statements are stated  for Lie monoids, but if these are Lie groups, $f$-bialgebras can be promoted to ``$f$-Hopf algebra'', a notion that will be introduced in \cite{f_Hopf}.
\end{remark}

As a first step towards Conjectures~\ref{conj:Morse}, \ref{conj:Floer} and \ref{conj:wrapped}, we introduce in this paper new moduli spaces of metric ``biforests'' $K^{\kund}_{\lund}$ and $J^{\kund}_{\lund}$, called respectively  \emph{forest biassociahedra} and  \emph{forest bimultiplihedra}. They will be used as as parameter spaces in order to define some moduli spaces of flowlines and pseudo-holomorphic curves \cite{biass2}. We construct ``face-coherent partial compactifications'', i.e. we glue to them products of copies of themselves and obtain manifolds with boundaries $\left( \overline{K}^{\kund}_{\lund} \right)_{\leq 1}$ and $\left( \overline{J}^{\kund}_{\lund} \right)_{\leq 1}$. These are noncompact, but are ``compact enough'' for our purposes, meaning that generically, the one dimensional moduli spaces they parametrize will extend to compact moduli spaces over them. The precise description of their boundary will then allow us to introduce the algebraic notions one should observe in Morse and Floer theory. We conjecture that these new manifolds with boundaries can in fact be realized as standard polytopes in a Euclidean space. 
We moreover provide five different geometric realizations  (Section~\ref{ssec:geom_realiz}), that should allow one to prove our various conjectures.

\subsection{Idea of the constructions}
\label{ssec:idea_constr}We now present the main ideas of our approach, and explain why Conjectures~\ref{conj:Morse}, \ref{conj:Floer} and  \ref{conj:wrapped} should hold. For simplicity, let us work over $\Z{2}$ in this section. 
We focus on a Lie monoid $G$ endowed with a Morse function (implicit in the notations). 
The Pontrjagyn product and the coproduct are respectively induced by the multiplication and the diagonal:
\ea
m &\colon G\times G \to G, \\
\Delta &\colon G \to G\times G.
\ea
They respectively induce an \Ainf -algebra and an \Ainf -coalgebra structure on the Morse chains $CM(G)$. The \Ainf -coalgebra structure (more precisely, the corresponding dual \Ainf -algebra structure on cochains) is due to Fukaya \cite{Fukaya_htpy}, see also \cite{FukayaOh_zeroloop,Mescher_book}. It is defined by counting Morse flow trees. The \Ainf -algebra structure was introduced in \cite{equiv_trees} and involves similar trees, except that they grow in the opposite direction, and that the condition at vertices involves $m$ rather than $\Delta$.

One can ask how these two structures interact. Are they part of a larger structure? At the homology level, the product and coproduct satisfy the Hopf relation:
\e\label{eq:Hopf}
\Delta_* \circ m_* = (m_*)^{\otimes 2} \circ \tau_{23} \circ (\Delta_*)^{\otimes 2} ,
\e
with $\tau_{23}\colon H_*(G)^{\otimes 4} \to H_*(G)^{\otimes 4}$ exchanging the second and third factors.

At the chain level, it is only expected to hold up to homotopy. Indeed, one can construct a map ${\alpha}^2_2 \colon CM(G)^{\otimes 2} \to CM(G)^{\otimes 2} $ by counting a moduli space of flow graphs, parametrized by a family of abstract graphs $K_2^2$, as in Figure~\ref{fig:K_2_2}. Denoting ${\alpha}^2_1=m_*$ and ${\alpha}^1_2=\Delta_*$ the chain-level pushforwards, the relation becomes (ignoring signs):
\e\label{eq:R_2_2}\tag{$R^2_2$}
\partial ( {\alpha}^2_2 ) = {\alpha}^1_2 \circ {\alpha}^2_1 + ({\alpha}^2_1)^{\otimes 2} \circ \tau_{23} \circ ({\alpha}^1_2)^{\otimes 2} ,
\e
which, as one expects, is (\ref{eq:Hopf}) up to homotopy. In order to understand what further coherence relations these satisfy, one is led to consider a 2-dimensional family of graphs as drawn in Figure~\ref{fig:K_3_2}.

\begin{figure}[!h]
    \centering
    \def\svgwidth{.50\textwidth}
    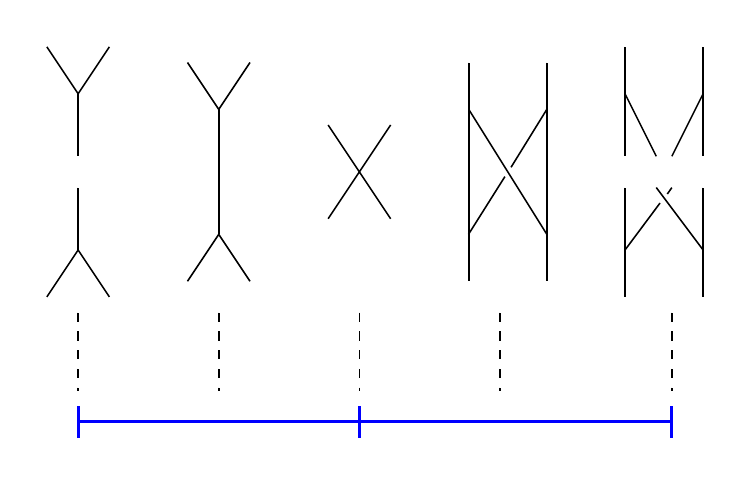
      \caption{The moduli space $K^2_2$. It consists in two intervals, parametrized by the height between vertices. These two intervals overlap where these heights are zero.}
      \label{fig:K_2_2}
\end{figure}

\begin{figure}[!h]
    \centering
    \def\svgwidth{.70\textwidth}
    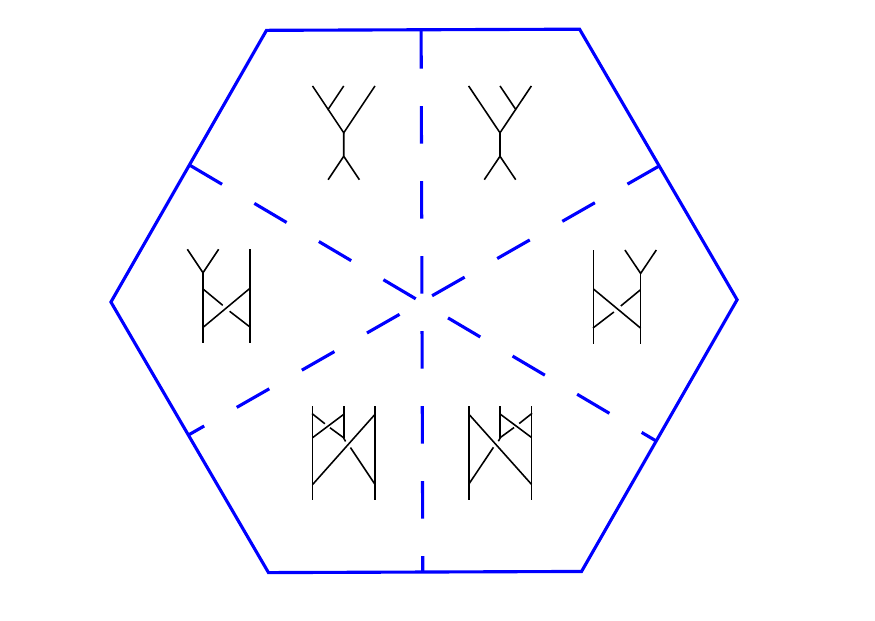
      \caption{The moduli space $K^3_2$. Each of the six chambers is parametrized by two heights. When a height goes to zero, one crosses a wall (interior dashed lines) and moves to an adjascent chamber. When a height goes to infinity, the graph breaks: one reaches an exterior boundary of the hexagon.}
      \label{fig:K_3_2}
\end{figure}

This suggests that more generally, there should exist moduli spaces $K^k_l$ encoding the combinatorics of similar graphs with $k$ inputs and $l$ outputs. These should contain $K_k \times K_l \times [0, +\infty)$ by gluing an ascending $k$-tree with a descending $l$-tree, with distance between the root vertices given by the $[0, +\infty)$ parameter. When this parameter goes to zero, this family should continue, with the two merging vertices replaced by a ``Hopf pattern'' (as in $K^2_2$), and the new born vertices might also collide with other vertices, giving further Hopf patterns as in the two bottom chambers of $K^3_2$. In this process, graphs become increasingly complicated, and it 
 was not so clear to us how to describe these in full generality, until André Henriques made a beautiful observation to us: ``your graphs are obtained by intersecting trees'', i.e. as in Figure~\ref{fig:inters_graphs}. This observation seems to have been known to combinatorics experts as well, as we were told by Bruno Vallette and Frédéric Chapoton. 

\begin{figure}[!h]
    \centering
    \def\svgwidth{.50\textwidth}
    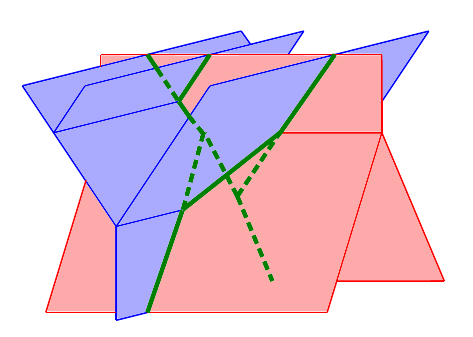
      \caption{Graphs as intersections of trees. Observe that the green graph corresponds to the middle left chamber of $K^3_2$.}
      \label{fig:inters_graphs}
\end{figure}

It now becomes clear that one can define $K^k_l$ as :
\e
K^k_l = K_k \times K_l \times \rr,
\e
with the $\rr$ parameter corresponding to the relative height between the root vertices, allowed to be negative. This makes the relation with the other biassociahedron more transparent, see Section~\ref{ssec:rel_work}.

By applying the above intersection procedure, we can define moduli spaces of flow graphs, and a family of maps 
\e
{\alpha}^k_l\colon CM(G)^k \to CM(G)^l ,
\e
defined by counting their zero dimensional part. To write down which coherence relations these satisfy, as it is standard in Morse theory, one considers one-dimensional moduli spaces and use the fact that the signed count of points in the boundary of their compactification is zero. Two kinds of contributions appear: 
\begin{itemize}
\item breaking at external edges, contributing to the homotopy summands $\partial ({\alpha}^k_l)$,
\item domain degenerations, where lengths of some internal edges tend to infinity.
\end{itemize}
As we consider one dimensional moduli spaces, only codimension one degenerations occur, generically. A somewhat counter-intuitive phenomenon can be observed (already appearing for the multiplihedron). In the associahedron, breaking at $k$ edges of a tree is a codimension $k$ phenomenon; while in our setting it can sometimes be of codimension one, as we shall see in more details in Section~\ref{sec:biass_bimult}. Loosely speaking, 
as long as a nonvertical tree is facing on the other side, it forces gluing parameters to be related; while when something vertical is facing, those parameters are unrelated.

This reflects the fact that these spaces are not ``face-coherent'': 
their codimension one boundary faces are not products of $K^{k_i}_{l_i}$'s, but rather \emph{fibered products}. Indeed, $K^k_l$ fibers over $K_k$ and $K_l$, by projecting respectively to the first and second tree. For example, breaking as in Figure~\ref{fig:breaking_K_4_3} 
appearing in the boundary of $K^4_3$ is of codimension one, and corresponds to $( K^2_3 \times_{K_3} K^2_3 )\times K_2$.

\begin{figure}[!h]
    \centering
    \def\svgwidth{.60\textwidth}
\begingroup%
  \makeatletter%
  \providecommand\color[2][]{%
    \errmessage{(Inkscape) Color is used for the text in Inkscape, but the package 'color.sty' is not loaded}%
    \renewcommand\color[2][]{}%
  }%
  \providecommand\transparent[1]{%
    \errmessage{(Inkscape) Transparency is used (non-zero) for the text in Inkscape, but the package 'transparent.sty' is not loaded}%
    \renewcommand\transparent[1]{}%
  }%
  \providecommand\rotatebox[2]{#2}%
  \newcommand*\fsize{\dimexpr\f@size pt\relax}%
  \newcommand*\lineheight[1]{\fontsize{\fsize}{#1\fsize}\selectfont}%
  \ifx\svgwidth\undefined%
    \setlength{\unitlength}{419.52755906bp}%
    \ifx\svgscale\undefined%
      \relax%
    \else%
      \setlength{\unitlength}{\unitlength * \real{\svgscale}}%
    \fi%
  \else%
    \setlength{\unitlength}{\svgwidth}%
  \fi%
  \global\let\svgwidth\undefined%
  \global\let\svgscale\undefined%
  \makeatother%
  \begin{picture}(1,0.33783784)%
    \lineheight{1}%
    \setlength\tabcolsep{0pt}%
    \put(0,0){\includegraphics[width=\unitlength,page=1]{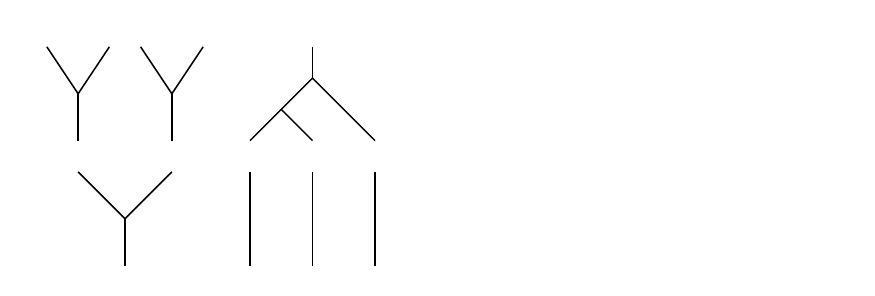}}%
    \put(0.49127263,0.21335931){\color[rgb]{0,0,0}\makebox(0,0)[lt]{\lineheight{1.25}\smash{\begin{tabular}[t]{l}$K^{(2,2)}_{3} \simeq K^{2}_{3}  \times_{K_3} K^{2}_{3}$ \end{tabular}}}}%
    \put(0.48533511,0.07579742){\color[rgb]{0,0,0}\makebox(0,0)[lt]{\lineheight{1.25}\smash{\begin{tabular}[t]{l}$K^{2}_{(1,1,1)} \simeq K_{2}$\end{tabular}}}}%
  \end{picture}%
\endgroup%

      \caption{A fibered product boundary face of $K^4_3$.}
      \label{fig:breaking_K_4_3}
\end{figure}

This ``face-incoherence'' is a priori a problem: the boundary terms in the coherence relations cannot be expressed as combinations of ${\alpha}^k_l$ maps, they are new quantities. 
Possible solutions include: 
\begin{enumerate}
\item ``Continue the moduli spaces'', i.e. glue new moduli spaces to the problematic boundaries, that will have simpler boundaries. This can usually be done by using some resolutions of diagonals, in order to resolve fiber products. 
\item Enlarge the family of moduli spaces, in order to include the fibered products of the boundaries. 
\end{enumerate}

Approach (1) might also work, and perhaps would correspond to \cite{SaneblidzeUmble}, see Section~\ref{ssec:rel_work}. But further subtleties coming from the fact that the diagonals of the associahedra are not associative (see \cite{MarklSchnider_celldiag_ass}) would later lead to complications that we prefer to avoid (not to mention other subtleties of realizing geometrically and coherently the new boundaries).

A priori, approach (2) could never reach an end. But fortunately in our case it does, on a geometrically simple family: trees are replaced by forests. The new moduli spaces have a very natural boundary description, leading to fairly simple coherence relations. Therefore, we follow the second approach (2), as it  seems best suited for the future constructions we have in mind, see Section~\ref{ssec:rel_work}.

We will call this larger family the $f$-biassociahedron, and denote it $K^{\kund}_{\lund}$, with $\kund = (k_1, \ldots , k_a)$ and $\lund = (l_1, \ldots , l_b)$ encoding respectively  the ascending and descending forests (i.e. the ascending forests has $a$ trees, and the $i$-th tree has $k_i$ leaves).  
For example, the boundary strata of Figure~\ref{fig:breaking_K_4_3} will correspond to $K^{(2,2)}_{3} \times K^{2}_{(1,1,1)} $. These will lead, with $A=CM(G)$, to a family of maps:
\e
{\alpha}^{\kund}_{\lund} \colon A^{\otimes b ( k_1 + \cdots + k_a)} \to A^{\otimes a (l_1 + \cdots + l_b)}
\e
satisfying coherence relations of the form:
\e
\partial ( {\alpha}^{\kund}_{\lund}) = \sum_{ ``\partial^1 K^{\kund}_{\lund}\text{''}} {\alpha}^{\kund^0}_{\lund^0}  \circ {\alpha}^{\kund^1}_{\lund^1} ,
\e
as well as some other relations, saying that for some multi-indices, the ${\alpha}^{\kund}_{\lund}$ can be simplified. 
These will be our prototype of an $f$-bialgebra. From $K^{\kund}_{\lund}$ we will also derive several notions of bimodules (and ``quadrimodules''), as well as some categorical analogs. 

Recall that \Ainf -algebras (resp. \Ainf -categories) come with a notion of morphisms (resp. functors), whose coherence relations are encoded by the multiplihedron. 
We also introduce an $f$-multiplihedron $J^{\kund}_{\lund}$ that will permit to define notions of morphisms between $f$-bialgebras and bimodules, and functors between $f$-categories. These will also correspond to moduli spaces of biforests, endowed with a horizontal ``grafting level''. In fact, we will first define $J^{\kund}_{\lund}$, and construct $K^{\kund}_{\lund}$ as a quotient of it.

\subsection{Related work}
\label{ssec:rel_work}
\paragraph*{\textit{Lie Group actions on Fukaya categories}}

The question of how a Lie group acts on a Fukaya category was raised by Teleman in \cite[Conj.~2.9]{Teleman_icm}, he conjectured that ``$G$ should act topologically on $\Fuk(M)$'', and outlined several possible ways of making this statement precise. Progress was made in \cite{EvansLekili}, and a version of this conjecture has been established in  \cite{OhTanaka_quotients_localizations,OhTanaka_actions}. 
Our work can be seen as an alternative approach to this question, and it would be interesting to relate both approaches.
\vspace*{.3cm}
\paragraph*{\textit{Diagonals, biassociahedron and \Ainf -bialgebras}}
Versions of homotopy bialgebras were first defined in \cite{Markl_resolution}, and later revisited in the general setting of properads \cite{MerkulovVallette1,MerkulovVallette2}.  
\Ainf-bialgebras have been introduced in \cite{SaneblidzeUmble}, derived from the combinatorics of their biassociahedron $KK_{m,n}$. 
Markl revisited their construction as a two step process \cite{Markl_biperm_biass}. 
First, he introduces a step-one biassociahedron. 
These are not face-coherent polytopes: their boundaries are not products of themselves, but rather fibered products over the associahedron. He then outlines how one should use a cellular approximation of the  diagonal of the base to resolve the fiber product, leading to a subdivision of the boundaries, and a step-two biassociahedron that should correspond to Saneblidze and Umble's $KK_{m,n}$.  

Cellular  diagonals of the associahedron were first introduced algebraically in \cite{SaneblidzeUmble_diag}, and an alternative construction  appeared in \cite{MarklSchnider_celldiag_ass}. The first topological construction appeared in \cite{MTTVdiag}, where they recover the formula of \cite{MarklSchnider_celldiag_ass}. Both \cite{SaneblidzeUmble_comparing_diag} and \cite{MarklSchnider_celldiag_ass} were recently shown to coincide in \cite{SaneblidzeUmble_comparing_diag} and \cite{DLPS_diag_perm}. 
Such diagonals can  also be used to form tensor products of \Ainf-algebras and categories \cite{Amorim_tensorprod,LAM_diag_multiplihedra,LOT_diag}.  We refer to \cite{DLPS_diag_perm} for more about diagonals, and explicit comparisons between the various ones. There it is shown that there exists exactly two cellular ``operadic'' diagonals of the associahedron and multiplihedron (and more generally, operahedron). Unfortunately, such diagonally-induced tensor products can never be strictly associative \cite{MarklSchnider_celldiag_ass}, and tensor products of \Ainf-morphisms are never strictly compatible with composition \cite{LAM_diag_multiplihedra}. Avoiding the use of diagonals as we do should allow one to overcome these issues.

The step-one biassociahedra have been realized as polytopes in \cite{chapoton2022shuffles}, using a new operation on polytopes called shuffle product. We believe these should correspond to our $K^{\kund}_{\lund}$, with $\kund = (k)$ and $\lund = (l)$ single integers.  
To our best understanding, the step-two biassociahedron have not been realized as polytopes yet. While crucial in combinatorics,  polytope realizations appear to be of secondary interest for our purposes: what matters most is to have coherent geometric realizations, i.e. as moduli spaces of geometric objects one can count in Morse and Floer theory (as in \cite{MauWoodwardrealiz,Botman_witch_realize}). We provide such realizations in Section~\ref{ssec:geom_realiz}, and we believe that Chapoton-Pilaud shuffle product should allow one to realize $K^{\kund}_{\lund}$ as polytopes, for general multi-indices.

\vspace*{.3cm}
\paragraph*{\textit{Koszul duality for props}}
\Ainf -algebras are strong homotopy counterparts of associative algebras: these are algebras over the operad \Ainf , which is the cobar construction of the cooperad Koszul dual to the operad $\Acal ss$ encoding associative algebras. 
Bialgebras are ``gebras'' over a properad $\Bcal iass$ and there exists a Koszul duality theory for properads \cite{vallette_koszul_duality}. Unfortunately the most economical presentation of the properad $\Bcal iass$ is not a homogeneous quadratic so it does not fit directly into the Koszul duality for properads. Therefore the homotopical generalisation of associative bialgebras is a notion much more subtle than \Ainf -algebras and this theory requires further developments.

\vspace*{.3cm}
\paragraph*{\textit{\WW\ theory}} We now explain our main geometric motivations for this work. In \cite{WWfft}\cite{WWffttangles}\cite{Wehrheimphilo}, \WW\ introduce a framework to construct an extended TFT in dimensions $(2..4)$ corresponding to Donaldson-Floer instanton gauge theory \cite{Donaldsonpoly,Floerinst}. Their construction takes the form of a 2-functor from a cobordism (or tangle) 2-category to a 2-category enhancement of Weinstein's symplectic category $\Symp$ \cite{weinsteincat}, in which Lagrangian Floer cohomology classes are viewed as 2-morphisms.

In order to construct a chain-level version of $\Symp$, Bottman introduced a 2-associahedron \cite{Bottman_2_ass} in order to turn  $\Symp$ to an $(A_\infty, 2)$-category \cite{BottmanCarmeli} that contains Fukaya categories as 1-morphism categories.
 He encountered similar issues of polytopes not being ``face-coherent'' as explained before, and suggested in \cite{Bottman_short_note} a strategy to resolve it, which partly inspired our work. A more detailed account will appear in \cite{Bottman_flowcat}. Frédéric Chapoton informed us that he also considered using similar multi-index polytopes back in 1998.

In \cite{ham}, the first author outlined how to extend \WW's constructions one dimension down, i.e. as a $(1..4)$-extended TFT. 
It overcomes issues of moduli spaces having quotient singularities by working equivariantly, following \MW's work \cite{MW}. 
There, the target is (the completion of) a ``partial\footnote{in the sense that some compositions are partially defined} 2-category'' $\Ham$: a real analog of the Moore-Tachikawa category \cite{MooreTachikawa}, in which objects are compact Lie groups, and which contains $\Symp$ as a subcategory of $\mathrm{End}_{\Ham}(\lbrace e \rbrace,\lbrace e \rbrace )$. 
We expect:
\begin{itemize}
\item this work to allow one to promote $\Ham$ to some sort of 3-category (that could be called a partial $(\infty,3)$-category), in the same way that $\Symp$ should be an $(A_\infty, 2)$-category,
\item $f$-bialgebras (or $d$-categories) to  form the objects of (some sort of) 3-category $\fBialg$ (or $\dCat$),
\item some sort of 3-functor $\Ham \to \dCat$ to exist, essentially from Conjecture~\ref{conj:Floer}, extending $\Symp \to A_{\infty}\text{-}\mathcal{C}at$  \cite{MauWehrheimWoodward,Fukaya_functor}.
\end{itemize}
After composing with the functor $\Cob_{1..3}\to \Ham$ of \cite{ham} and extending it to dimension 4, one should get a more algebraic TFT $\Cob_{1..4}\to \dCat$ that could be seen as an instanton counterpart of cornered Heegaard Floer theory, presented below. 

\vspace*{.3cm}

\paragraph*{\textit{\OSz\ theory}}
Donaldson-Floer theory has a conjecturally equivalent cousin: Seiberg-Witten theory \cite{SW_electicmagn_duality}. In the same way that \WW's theory is the Atiyah-Floer counterpart of Donaldson-Floer theory, Heegaard-Floer homology \cite{OSzholodisk1} 
is the symplectic counterpart of Monopole Floer homology \cite{KMbook}. These are TFT-like invariants that associate homology groups to closed 3-manifolds, an one can also go down in dimensions: 
\begin{itemize}
\item a ``bordered'' version of it (allowing for 3-manifolds with boundary) is developped in \cite{LOT_bordered_HF},
\item a ``cornered'' version of it (allowing for 3-manifolds with corners) is developped in \cite{DouglasManolescu,DouglasLipshitzManolescu}, and revisited by \cite{ManionRouquier}, using higher representations of the quantum group (a bialgebra!) $U_q(\mathfrak{gl}(1|1)^+)$, in place of the nilCoxeter 2-algebras involved in \cite{DouglasManolescu,DouglasLipshitzManolescu}. Structural similarities bring hope to bridge these with our constructions in the future.
\end{itemize}

\subsection{Organization of the paper}
\label{ssec:organization}

In Section~\ref{sec:biass_bimult} we introduce the various notions of forests that will be involved in the constructions of the moduli spaces $K^{\kund}_{\lund}$  and  $J^{\kund}_{\lund}$. We orient these spaces, and construct their partial compactifications by gluing moduli spaces of broken biforests, and determine how orientations are affected by these gluings.

This then allows us in Section~\ref{sec:bialg_bimod_mph} to define the various algebraic notions appearing in Conjectures~\ref{conj:Morse}, \ref{conj:Floer} and \ref{conj:wrapped}.

\subsection{Conventions about signs and orientations} 
\label{ssec:orientations}

We hope to provide a clear and unified treatment for signs.  
Most signs will have a simple geometric definition, as the orientation of a map between moduli spaces; and then  explicit formulas will be provided. We will follow the standard conventions below:

\begin{itemize}
\item We orient direct sums as usual: if $(e_1, \ldots, e_m)$ is a direct frame for $E$, and $(f_1, \ldots, f_n)$ is a direct frame for $F$, then $(e_1, \ldots, e_m,f_1, \ldots, f_n)$ is a direct frame for $E\oplus F$.
\item If $E\subset F$ are both oriented, we orient a complement $E^{\perp}$ of $E$ consistently with the splitting $F = E \oplus E^{\perp}$
\item If a connected oriented Lie group $G$ acts freely and properly on an oriented manifold $X$, then the quotient $X/G$ is locally a complement of the $G$-orbits, i.e. $T_x X \simeq T_x (G.x) \oplus T_{[x]} X/G$. We orient it as such.
\item We orient boundaries by the outward pointing first convention.
\end{itemize}
For graded chain complexes:
\begin{itemize}
\item We use Koszul's rule, whenever we exchange tensors $A\otimes B \to B\otimes A$ we use the sign $a\otimes b \mapsto (-1)^{\abs{a}\abs{b}} b\otimes a$. Likewise, if $\varphi , \psi $ are linear maps, $\varphi \otimes \psi$ is defined by $(\varphi \otimes \psi )(x\otimes y)= \varphi(x) \otimes \psi(y) $.

\item If $(A, \partial_A)$,  $(B, \partial_B)$ are chain complexes, $A\otimes B$ is equipped with $id\otimes \partial_B + \partial_A \otimes id$, and $\hom (A,B)$ with $\partial(\varphi) = \partial_{B}\circ \varphi - (-1)^{\abs{\varphi}} \varphi \circ \partial_A$.
\end{itemize}

\begin{acknow}We are indebted to André Henriques the key observation of Figure~\ref{fig:inters_graphs}. We are also grateful to Bruno Vallette for his interest, encouragements, and many insightful conversations and suggestions; as well as to Guillaume Laplante-Anfossi for a careful reading of a preliminary version, and very useful comments and improvements. 

GC would like to thank Nathaniel Bottman for explaining him his work in detail, and the problem of fiber products in the boundary of the 2-associahedron; as well as Frédéric Chapoton, Chris Douglas, Fabian Haiden, Dominic Joyce, Kobi Kremnitzer, Andy Manion, Ciprian Manolescu, Dasha Polyakova, Alex Ritter, Raphaël Rouquier, Constantin Teleman and Chris Woodward for helpful conversations.
\end{acknow}

\section{$f$-Biassociahedron and $f$-Bimultiplihedron}
\label{sec:biass_bimult}

\subsection{Trees, forests and the interior of the $f$-bimultiplihedron} 
\label{ssec:trees_forests}
Here we introduce the kinds of trees and forests that will be involved in our moduli spaces.

\begin{defi}\label{def:forest} A (rooted ribbon) \emph{forest} $\varphi
$ consists in:
\begin{itemize}
\item Ordered finite sets of leaves $\Leaves(\varphi)$ and roots $\Roots(\varphi)$,
\item Finite sets of edges $\Edges(\varphi)$ and vertices  $\Vert(\varphi)$,
\item Source and target maps 
\ea
s &\colon \Edges(\varphi) \to \Vert(\varphi) \cup \Leaves(\varphi),\\
t &\colon \Edges(\varphi) \to \Vert(\varphi) \cup \Roots(\varphi).
\ea
\end{itemize}
Such that:
\begin{itemize}
\item Each leaf (resp. root) is the source (resp. target) of exactly one edge,
\item Each vertex is the source of exactly one edge, and the target of at least two edges,
\item There are no cycles (i.e. cyclic sequences of edges $e_1, \ldots , e_k$ with $t(e_i) = s(e_{i+1})$ and $t(e_k) = s(e_{1})$ are not allowed).

For a vertex $v$, we will denote $\mathrm{In}(v)$ and $\mathrm{Out}(v)$ the sets of incoming and outgoing edges. 
$\mathrm{Out}(v)$ consists in one element. The arity of $v$ is the number of incoming edges $ar(v) = \# \mathrm{In}(v) \geq 2 $, and the  valency $val(v) = 1 + ar(v)$ the number of adjacent edges. We say that a forest is trivalent if 
$val(v)= 3$ for each $v$.
\item The following order-preserving condition is satisfied. The order on $\mathrm{Leaves}(\varphi)$ defines an order on each $\mathrm{In}(v)$ in the following way: if $\gamma$ and $\gamma'$ are two paths going from leaves $l,l'$ to $e,e'\in \mathrm{In}(v)$, then we order $e$ and $e'$ the same way as $l,l'$. This means that we want this order to be independent on the choices of the ``ancestors'' $l,l'$ of $e,e'$). This condition is not satisfied by the right picture in Figure~\ref{fig:forest}.
\end{itemize}

Let the set of \emph{internal edges} be defined as
\e
\IntEd(\varphi) =  s^{-1}(\Vert(\varphi)) \cap t^{-1}(\Vert(\varphi))  \subset \Edges(\varphi).
\e
The remaining edges as refered to as \emph{semi-infinite}, or \emph{infinite} (in the case when the source is a leaf, and the target is a root).

Starting from a leaf $l$, we can form a sequence of edges $e_1, e_2, \ldots $ with $t(e_i) = s(e_{i+1})$. Since there are no cycles, the sequence must terminate to a root $r$. We say that $l$ lies on top of $r$.
 This defines a surjection:
\e
\rho \colon \Leaves(\varphi) \to \Roots(\varphi).
\e
If as an ordered set, $\Roots(\varphi) = (r_1, \ldots , r_a)$, we associate the multi-index of cardinalities:
\e
\kund = (\# \rho^{-1}(r_1), \ldots , \# \rho^{-1}(r_a) )
\e
called the \emph{type} of $\varphi$.

We say that $\varphi$ is a (rooted ribbon) \emph{tree} if it has only one root. So a forest is an ordered union of trees.

The following two kinds of forests will play a particular role. 
We say that $\varphi$ is \emph{vertical} (or trivial) if it has no vertices, i.e. $\kund = (1, \ldots , 1 )$, i.e. all edges are infinite. 
We say that $\varphi$ is \emph{almost vertical} if it is not vertical, is trivalent and has no internal edges, i.e. $\kund$ has only 1 and 2 as coefficients (and at least one 2).
\end{defi}

\begin{figure}[!h]
    \centering
    \def\svgwidth{.50\textwidth}
    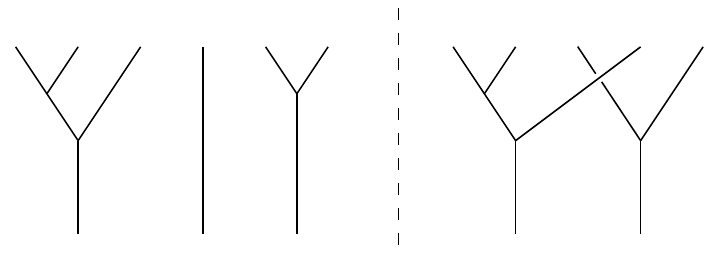
      \caption{The left side corresponds to a forest with $\kund = (3,1,2)$. The right side is not a forest.}
      \label{fig:forest}
\end{figure}

\begin{defi}\label{def:metric_forest} A \emph{metric forest} $F = (\varphi, L)$ consists in a forest $\varphi$, with a length function on the set of internal edges:
\e
L\colon \mathrm{IntEd}(\varphi) \to [0, +\infty)
\e

\end{defi}

We will regard metric forests modulo equivalence.

\begin{defi}\label{def:edge_collapse}
Define the equivalence relation on metric forests generated by either:
\begin{itemize}
\item If $\varphi\simeq \varphi'$\footnote{in the sense that the sets of leaves, roots, edges and vertices are in bijection, consistently with orders, sources and targets.} and under this identification $L\simeq L'$, then we declare $(\varphi, L) \sim (\varphi', L')$.
\item If  $e\in \mathrm{IntEd}(\varphi)$, let $\varphi(e)$ consist in the forest obtained from $\varphi$ by collapsing $e$ (i.e. by removing $e$ and merging $s(e)$ 	and $t(e)$). If $L(e) = 0$, then we declare $F=(\varphi,L)$ to be equivalent to $F' = (\varphi(e), L')$, with $L'=L$ on $\mathrm{IntEd}(\varphi') =  \mathrm{IntEd}(\varphi)\setminus \lbrace e\rbrace$.
\end{itemize}
We say that $F$ is \emph{irreducible} if $L(e) > 0$ for every edge. Each metric forest can be put in a unique irreducible form.
\end{defi}

In this paper we will be interested in metric forests with extra parameters (one height for each nonvertical tree)

\begin{defi}[Ascending and descending forests]\label{def:up_down_forest}An \emph{ascending forest} $U = (\varphi,h)$ is a forest $\varphi$ with a height function $h\colon \Vert(\varphi) \to \rr$ such that if $e$ is an internal edge from $v_1$ to $v_2$, then $h(v_1)\geq h(v_2)$. That is, edges are pointing down. For leaves $l$ and roots $r$, we set $h(l)=+\infty$ and $h(r)= -\infty$.

Ascending forests are regarded up to equivalence as well: if for some edge $e\colon v_1\to v_2$, one has $h(v_1)= h(v_2)$, then one identifies $U$ with $(\varphi(e), h_e)$, with $h_e$ assigning the common value to the merged vertex $v_e$. 
Up to this equivalence they form a moduli space $U^{\kund}$, for a given type $\kund$. 

An ascending forest $U = (\varphi,h)$ determines a metric forest $T = (\varphi,L)$ by setting $L(e) = h(v_1) - h(v_2)$ for $e\colon v_1\to v_2$.

Let $\tilde{a}$ stand for the number of nonvertical trees in $\varphi$. The group $\rr^{\tilde{a}}$ acts on $U^{\kund}$ by shifting the heights of the nonvertical trees. This action is free, and leaves the underlying metric forest unchanged.

Likewise, one can define a \emph{descending forest} $D = (\varphi,h)$ to be such that $(\varphi,-h)$ is an ascending forest. For descending forests we will use multi-indices $\lund = (l_1, \ldots , l_b)$, and denote their moduli spaces $D_{\lund}$, acted on by $\rr^{\tilde{b}}$.

\end{defi}

\begin{defi}\label{def:up_down_forests}

A \emph{biforest} is a pair  $(U,D)$ of an ascending  and a descending forest. For a pair of multi-indices $\kund = (k_1, \ldots , k_a)$  and $\lund = (l_1, \ldots , l_b)$, let the (interior of the) bi-multiplihedron be defined as
\e
J^{\kund}_{\lund} = U^{\kund} \times D_{\lund}.
\e
\end{defi}
Geometrically, we think of elements in $J^{\kund}_{\lund}$ as having a ``grafting level'' at height $h=0$, as in Figure~\ref{fig:1_grafted_biforest}.

\begin{figure}[!h]
    \centering
    \def\svgwidth{.50\textwidth}
\begingroup%
  \makeatletter%
  \providecommand\color[2][]{%
    \errmessage{(Inkscape) Color is used for the text in Inkscape, but the package 'color.sty' is not loaded}%
    \renewcommand\color[2][]{}%
  }%
  \providecommand\transparent[1]{%
    \errmessage{(Inkscape) Transparency is used (non-zero) for the text in Inkscape, but the package 'transparent.sty' is not loaded}%
    \renewcommand\transparent[1]{}%
  }%
  \providecommand\rotatebox[2]{#2}%
  \newcommand*\fsize{\dimexpr\f@size pt\relax}%
  \newcommand*\lineheight[1]{\fontsize{\fsize}{#1\fsize}\selectfont}%
  \ifx\svgwidth\undefined%
    \setlength{\unitlength}{368.50393701bp}%
    \ifx\svgscale\undefined%
      \relax%
    \else%
      \setlength{\unitlength}{\unitlength * \real{\svgscale}}%
    \fi%
  \else%
    \setlength{\unitlength}{\svgwidth}%
  \fi%
  \global\let\svgwidth\undefined%
  \global\let\svgscale\undefined%
  \makeatother%
  \begin{picture}(1,0.44615385)%
    \lineheight{1}%
    \setlength\tabcolsep{0pt}%
    \put(0,0){\includegraphics[width=\unitlength,page=1]{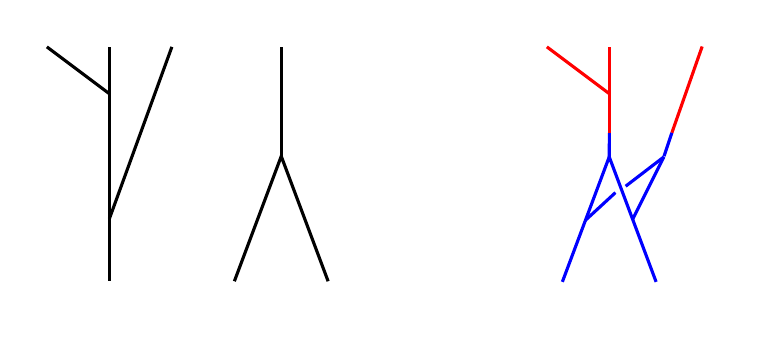}}%
    \put(0.58004808,0.25323796){\color[rgb]{0,0,0}\makebox(0,0)[lt]{\lineheight{1.25}\smash{\begin{tabular}[t]{l}$0$\end{tabular}}}}%
    \put(0,0){\includegraphics[width=\unitlength,page=2]{1_grafted_biforest.pdf}}%
  \end{picture}%
\endgroup%

      \caption{Metric biforests have a grafting level at height $h=0$.}
      \label{fig:1_grafted_biforest}
\end{figure}

We will use the following notations associated to a multi-index $\kund$:
\begin{itemize}
\item $\abs{\kund} = k_1 + \cdots + k_a$ corresponds to the number of leaves of forests
\item $n({\kund})=a$ the number of trees (i.e. the number of roots)
\item $\tilde{\kund}$ the multi-index obtained by removing all the entries in $\kund$ equal to 1.
\item $\tilde{a} = \tilde{n}({\kund}) = n(\tilde{\kund})$ the number of nonvertical trees
\item $v(\kund) = \abs{\kund} - n({\kund})$ the generic number of vertices (see below).
\item we will denote $\One_a= (1, \ldots,1)$ the multi-index with $a$ entries equal to 1. 
\end{itemize}

The spaces $U^{\kund}$, $D_{\lund}$ and $J^{\kund}_{\lund}$ can be explicitly described, and oriented, as follows. For a multi-index $\kund$, define the ordered set
\e
\Vert(\kund) = \left\lbrace 1, \ldots , \abs{\kund} \right\rbrace \setminus \left\lbrace k_1, k_1 + k_2, \ldots , \abs{\kund} \right\rbrace ;
\e
which has cardinality $v(\kund) = \abs{\kund} - n({\kund})$. Observe that if $\varphi$ is a forest of type $\kund$, one has a surjective map $\Vert(\kund)\to\Vert(\varphi)$ which is bijective iff $\varphi$ is trivalent. Indeed, the elements of $\Vert(\kund)$ correspond to pairs of consecutive leaves $l,l'$ lying on the same tree (i.e. over the same root $r$). Map such a pair to the first vertex where the paths going from $l,l'$ to $r$ meet.

It is straightforward to check that the map it induces
\e\label{eq:identif_bimult}
i^{\kund}_{\lund} \colon J^{\kund}_{\lund} \to \rr^{\Vert(\kund) \cup \Vert(\lund)} \simeq \rr^{v(\kund)} \oplus  \rr^{v(\lund)},
\e
mapping a biforest to all its heights (first the ascending forest, then the descending one); is a homeomorphism. We use it to give $J^{\kund}_{\lund}$ the structure of an oriented vector space. This will be useful for computing orientations and signs.

\subsection{Geometric realizations}
\label{ssec:geom_realiz}

The (interior of the) $f$-biassociahedron $K^{\kund}_{\lund}$ will be defined as a quotient of $J^{\kund}_{\lund}$, corresponding to removing the grafting level. Given a biforest $(U,D)$, one can form a metric graph (explained below), and we will quotient by the symmetries leaving this graph unchanged.

If  $U = (\varphi, h)$ is either an ascending or descending forest we associate the following spaces

\ea
\abs{U} &= \coprod_{e\in \Edges(\varphi)}I_e, \\
\langle U \rangle &= \abs{U}_{/_\sim}
\ea
where $I_e \subset \rr$ is either $[h(v_2), h(v_1)]$ or  $[h(v_1), h(v_2)]$, if $e\colon v_1\to v_2$ (when $h(v_i) = \pm \infty$ the bound is excluded). In $\langle U \rangle$ the equivalence relation we quotient out by is given by  identifying endpoints corresponding to a given vertex.

We also still denote $h\colon \abs{U} \to \rr$ and $h\colon \langle U \rangle \to \rr$ the obvious ``vertical'' projections. The graph associated to the biforest, which we will refer to as the \emph{Henriques intersection}, is defined to be the fibered product
\e
\Gamma(U, D) = \langle U \rangle\times_{\rr} \langle D \rangle,
\e
with induced height function. This corresponds to the intersection picture mentioned in the introduction (Figure~\ref{fig:inters_graphs}).

\begin{figure}[!h]
    \centering
    \def\svgwidth{.90\textwidth}
    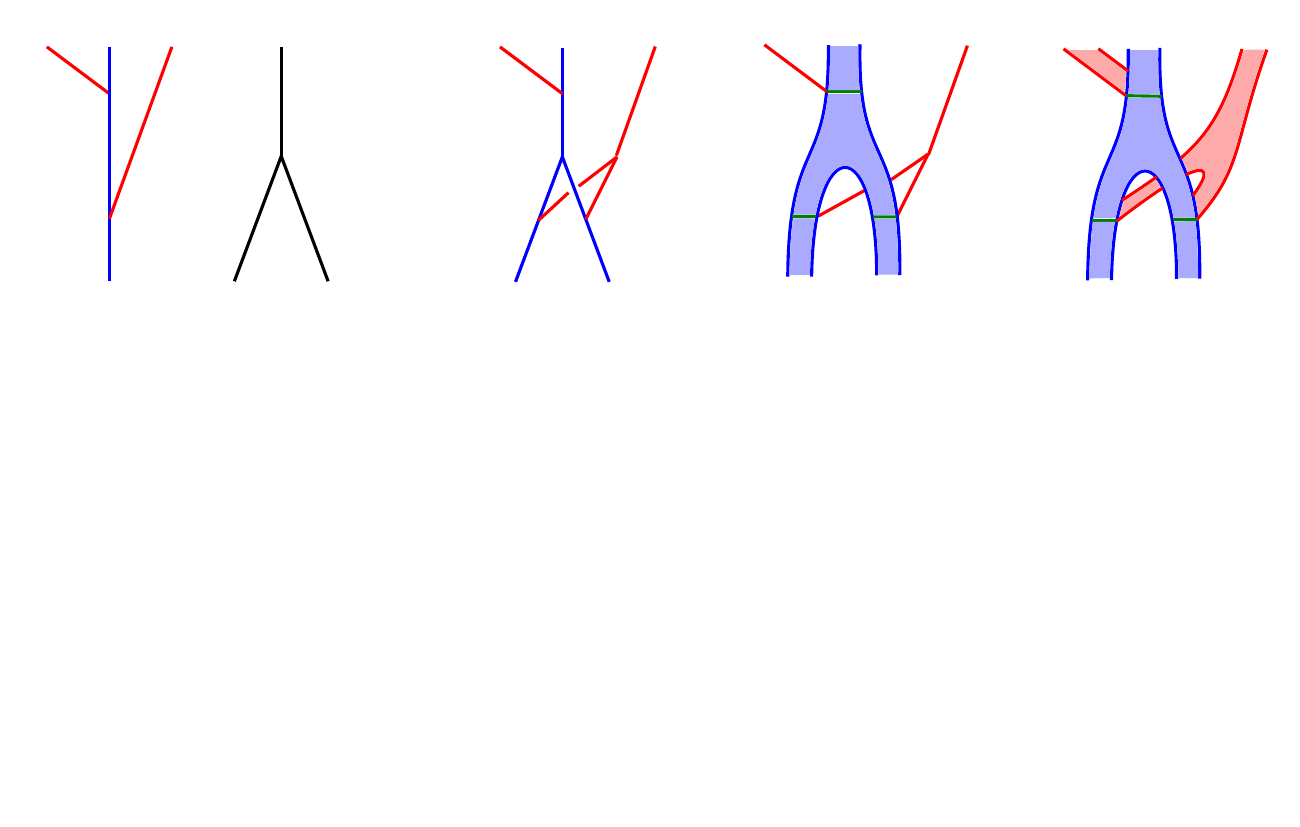
      \caption{A biforest $(U,D)$, the corresponding graph, hybrid graph, and foam, and their closed counterpart. We have drawn $\abs{U}^{\mathrm{Morse}}$ in red (corresponding to the groups), and $\abs{U}^{\mathrm{Floer}}$ in blue (corresponding to the $G\times H$-manifolds). In $H(U, D)$ and $H_c(U, D)$, the vertices of the red edges should ``touch'' every points of the green seam (condition (\ref{eq:hybrid_vert_cond})).}
      \label{fig:graph_hybr_foam}
\end{figure}

For Morse and Floer theory experts,  we describe the domains that can be used in order to form the moduli spaces involved in  Conjectures~\ref{conj:Morse}, \ref{conj:Floer} and \ref{conj:wrapped}, see Figure~\ref{fig:graph_hybr_foam}. 
In Morse theory (Conjecture~\ref{conj:Morse}), to be precise,  we will actually use $\abs{U}\times_{\rr} \abs{D} $ rather than $\Gamma(U, D)$.
In Floer (open string) theory, we will consider a thickened version $\langle\!\langle D \rangle\!\rangle$ of $\langle D \rangle$, a Riemann surface made of \emph{shredded strips}:
\e
\langle\!\langle D \rangle\!\rangle = \bigcup_{i=1}^b \left( \left( [0, l_i] \times\rr \right)\setminus\!\!\!\setminus \left(\bigcup_{i'=1}^{l_i-1} \lbrace i' \rbrace\times (-\infty, h_{i,i'}) \right)\right),
\e
Where the symbol $\setminus\!\!\setminus$ stands for ``cutting'', i.e. removing and gluing back twice. 
For Conjecture~\ref{conj:wrapped}, we will consider \emph{pseudo-holomorphic foams} with domain
\e
F(U, D) =  \abs{U} \times_{\rr} \langle\!\langle D \rangle\!\rangle,
\e
and for Conjecture~\ref{conj:Floer}, we will consider a hybridation of $\Gamma(U, D)$ and $F(U, D)$, as in \cite{equiv_trees}.
Writing $ \abs{U} = \abs{U}^{\mathrm{Morse}}\cup \abs{U}^{\mathrm{Floer}}$ the parts corresponding to the groups $G,H,$ and to the manifold $M$ respectively, the associated \emph{hybrid graph} is:
\e
H(U, D) = \abs{U}^{\mathrm{Morse}} \times_{\rr}  \langle D \rangle \cup \abs{U}^{\mathrm{Floer}}\times_{\rr} \langle\!\langle D \rangle\!\rangle.
\e
As in \cite{PSS,Oh_relat_Floer_QC,CorneaLalonde}, these are mixing flowlines and pseudo-holomorphic curves, but the way they interact is different. The vertex conditions, for triples of maps locally of the form
\ea
\gamma &\colon (-\infty, 0] \to G, \\
u_- &\colon (-\infty, 0] \times [0,1] \to M, \\
u_+ &\colon [0, +\infty) \times [0,1] \to M, 
\ea
will be given by:
\e\label{eq:hybrid_vert_cond}
\forall t\in [0,1],\  \gamma(0)\cdot u_-(0,t) = u_+(0,t) .
\e

If one  considers closed strings instead (quantum or symplectic homology), one might replace $\langle\!\langle D \rangle\!\rangle$ by the punctured sphere:
\e
\lbrace D \rbrace = \langle\!\langle D \rangle\!\rangle^- \cup_{\partial \langle\!\langle D \rangle\!\rangle} \langle\!\langle D \rangle\!\rangle,
\e
and form the closed counterparts of $H(U, D)$ and $F(U, D)$:
\ea
H_c(U, D) &= \abs{U}^{\mathrm{Morse}} \times_{\rr}  \langle D \rangle \cup \abs{U}^{\mathrm{Floer}}\times_{\rr} \lbrace D \rbrace, \\
F_c(U, D) &=  \abs{U} \times_{\rr} \lbrace D \rbrace.
\ea

\subsection{The interior of the $f$-biassociahedron}
\label{ssec:int_f_biass}

The group of symmetries of $J^{\kund}_{\lund}$ preserving $\Gamma(U, D)$ as a metric graph\footnote{i.e. the graph with  lenghts of its edges.} is:
\e
G^{\kund}_{\lund} = \begin{cases} 
\rr^{\tilde{a}} &\text{ if }\lund = \One_b,\\
\rr^{\tilde{b}} &\text{ if }\kund = \One_a ,\\
\rr&\text{ otherwise.}
\end{cases}
\e

This is an oriented vector space, and under the identification (\ref{eq:identif_bimult}), it acts in the first above case by translations along vectors $v_1, \ldots , v_{\tilde{a}}$, with 
\e
v_i = (0, \ldots , 0, 1, \ldots , 1, 0 ,\ldots,  0)
\e 
corresponding to the vertices of the i-th nonvertical tree. Likewise in the second case. In the third case it acts by an overall shift along $v=(1, \ldots, 1)$.

\begin{defi}\label{def:int_biass}
Let then the (interior of the) forest biassociahedron be defined as:
\e
{K}^{\kund}_{\lund} = \begin{cases}J^{\kund}_{\lund} /G^{\kund}_{\lund} &\text{ if }\dim G^{\kund}_{\lund} =1 ,\\ \emptyset & \text{ otherwise.}\end{cases}
\e
We will occasionally consider the variation $\widetilde{K}^{\kund}_{\lund} = J^{\kund}_{\lund} /G^{\kund}_{\lund}$, regardless of $\dim G^{\kund}_{\lund}$. 
We orient both spaces as quotients of $J^{\kund}_{\lund}$. That is, as oriented vector spaces, 
$ J^{\kund}_{\lund} =  G^{\kund}_{\lund} \oplus \widetilde{K}^{\kund}_{\lund}$, and when not empty, $ J^{\kund}_{\lund} =  G^{\kund}_{\lund} \oplus {K}^{\kund}_{\lund}$. In particular,  we identify $K^{\kund}_{\lund}$ with the standard orthogonal complement of $G^{\kund}_{\lund}$ in $J^{\kund}_{\lund} \simeq \rr^{v(\kund) + v(\lund)}$.
\end{defi}

\begin{remark}The variant $\widetilde{K}^{\kund}_{\lund}$ seems more natural to consider, and is convenient for describing the higher codimensional corner strata. However it is redundant: when $\dim G^{\kund}_{\lund}\geq 2$, $\widetilde{K}^{\kund}_{\lund}$ is a product of associahedrons up to a sign (Proposition~\ref{prop:K_vertic_split} below). It turns out to be more convenient to set ${K}^{\kund}_{\lund} = \emptyset$ and the associated maps ${\alpha}^{\kund}_{\lund} = 0$ in the definition of $f$-bialgebras, and this causes no loss of information.
\end{remark}
For a multi-index $\kund$, let us introduce:
\e\label{eq:spadesuit}
{\spadesuit_{\kund}} = \sum_{i=1}^{a} (a-i) k_i.
\e

\begin{prop}\label{prop:K_vertic_split}
Assume that $\lund$  is vertical, then as oriented manifolds:
\e\label{eq:K_lund_vertic}
\widetilde{K}^{\kund}_{\lund} \simeq (-1)^{\spadesuit_{\tilde{\kund}}} K_{k_1} \times \cdots \times K_{k_a}.
\e
Notice the tilde in $\spadesuit_{\tilde{\kund}}$, with $\tilde{\kund}$ standing for $\kund$ with the vertical trees removed.

Likewise, if $\kund$  is vertical, then as oriented manifolds:
\e\label{eq:K_kund_vertic}
\widetilde{K}^{\kund}_{\lund} \simeq (-1)^{\spadesuit_{\tilde{\lund}}} K_{l_1} \times \cdots \times K_{l_b}.
\e
\end{prop}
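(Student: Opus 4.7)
By the symmetry between ascending and descending forests, it suffices to treat the case $\lund = \One_b$; the case $\kund = \One_a$ will follow by an identical argument applied to the descending factor. In this case $v(\One_b) = 0$, so $D_{\One_b}$ is a single point and the identification $i^{\kund}_{\lund}$ of~\eqref{eq:identif_bimult} realizes $J^{\kund}_{\One_b} \simeq U^{\kund}$ as oriented vector spaces. This in turn factors canonically as an oriented product
\[
U^{\kund} \;\simeq\; U^{k_1} \times \cdots \times U^{k_a}
\]
over the constituent trees, with each vertical factor $U^1$ being a point.

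For each nonvertical tree ($k_i \geq 2$), the height-shift action of $\rr$ on $U^{k_i}$ is free and proper with quotient the interior of the standard associahedron $K_{k_i}$, so the orientation convention of Section~\ref{ssec:orientations} gives an oriented splitting $U^{k_i} \simeq \rr \oplus K_{k_i}$ in which the first summand is the orbit direction. Substituting this into the product decomposition above and absorbing the trivial point factors coming from vertical trees yields the interleaved decomposition
\[
J^{\kund}_{\One_b} \;\simeq\; \rr_1 \oplus K_{k_{i_1}} \oplus \rr_2 \oplus K_{k_{i_2}} \oplus \cdots \oplus \rr_{\tilde{a}} \oplus K_{k_{i_{\tilde{a}}}},
\]
where $i_1 < \cdots < i_{\tilde{a}}$ enumerate the nonvertical trees and $\rr_l$ is the $l$-th generator of $G^{\kund}_{\One_b}$.

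To conclude, we compare this with the defining oriented splitting $J^{\kund}_{\One_b} = G^{\kund}_{\One_b} \oplus \widetilde{K}^{\kund}_{\One_b}$ from Definition~\ref{def:int_biass}. Pulling all the $\rr_l$'s past the interleaved associahedron factors to the front produces, for each $l \geq 2$, a transposition of $\rr_l$ with $K_{k_{i_1}}, \ldots, K_{k_{i_{l-1}}}$, yielding a total Koszul sign
\[
(-1)^{\sum_{l=2}^{\tilde{a}} \sum_{m<l} \dim K_{k_{i_m}}} \;=\; (-1)^{\sum_{l=1}^{\tilde{a}} (\tilde{a}-l)(k_{i_l}-2)} \;=\; (-1)^{\spadesuit_{\tilde{\kund}}},
\]
where we use $\dim K_k = k-2$ and the fact that the even terms $-2(\tilde{a}-l)$ drop out modulo $2$. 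This establishes~\eqref{eq:K_lund_vertic}, and~\eqref{eq:K_kund_vertic} follows by the symmetric argument. The only delicate point is the sign bookkeeping above; the underlying homeomorphism is tautological from the product decomposition of $U^{\kund}$ (respectively $D_{\lund}$ in the symmetric case).
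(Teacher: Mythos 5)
Your argument is correct and is essentially the paper's own proof: both reduce to comparing the interleaved splitting $J^{\kund}_{\One_b}\simeq\bigoplus_i(\rr v_i\oplus K_{k_i})$ with the grouped splitting $G^{\kund}_{\One_b}\oplus\widetilde{K}^{\kund}_{\One_b}$, and both compute the resulting Koszul sign as $\sum_{l}\sum_{m<l}\dim K_{k_{i_m}}\equiv\spadesuit_{\tilde{\kund}}\pmod 2$. The only cosmetic difference is that the paper first discards the vertical trees via the orientation-preserving identification $\widetilde{K}^{\kund}_{\lund}\simeq\widetilde{K}^{\tilde{\kund}}_{\tilde{\lund}}$, whereas you absorb the point factors in place; the sign bookkeeping is identical.
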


\begin{proof}
Observe first that the obvious $\widetilde{K}^{\kund}_{\lund} \simeq \widetilde{K}^{\tilde{\kund}}_{\tilde{\lund}} $ is orientation preserving, since it commutes with the identifications $i^{\kund}_{\lund}$. Assume therefore that all vertical trees have been removed, so that $\tilde{\kund} = \kund$. On the one hand, $ K_{k_1} \times \cdots \times K_{k_a}$ is oriented so that
\e
J^{\kund}_{\lund} \simeq ( \langle v_1 \rangle \oplus K_{k_1}) \oplus \cdots \oplus (\langle v_{a} \rangle \oplus K_{k_a}).
\e
On the other hand, $\widetilde{K}^{\kund}_{\lund}$ is oriented so that
\e
J^{\kund}_{\lund} \simeq ( \langle v_1 \rangle \oplus \cdots \oplus \langle v_{a} \rangle ) \oplus ( K_{k_1} \oplus \cdots \oplus K_{k_a}).
\e
The sign ${\spadesuit_{\kund}}$ corresponds to moving all the $v_i$'s upfront: first exchanging $v_2$ with $K_{k_1}$ gives $\dim K_{k_1} = k_1\ mod\ 2$, then exchanging $v_3$ with  $K_{k_1} \oplus K_{k_2}$ gives $k_1+ k_2$, etc until $k_1 + \cdots + k_{a-1}$.
\end{proof}

The following is immediate from the definitions:
\begin{prop}\label{prop:dim_K_J}The dimensions of $J^{\kund}_{\lund}$, $K^{\kund}_{\lund}$ and $\widetilde{K}^{\kund}_{\lund}$ are given by
\ea
\dim J^{\kund}_{\lund} &= v({\kund}) + v({\lund}), \\
\dim K^{\kund}_{\lund} &= v({\kund}) + v({\lund}) -1, \\
\dim \widetilde{K}^{\kund}_{\lund} &= v({\kund}) + v({\lund}) -c(\kund,\lund), 
\ea
with
\e
c(\kund,\lund) = \dim G^{\kund}_{\lund} = \begin{cases} 
\tilde{a} &\text{ if }\tilde{b}=0,\\
\tilde{b} &\text{ if }\tilde{a}=0,\\
1&\text{ otherwise.}
\end{cases}
\e
\end{prop}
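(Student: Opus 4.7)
My plan is to read off each dimension directly from the definitions of Sections~\ref{ssec:trees_forests}--\ref{ssec:int_f_biass}. The homeomorphism $i^{\kund}_{\lund}$ of (\ref{eq:identif_bimult}) identifies $J^{\kund}_{\lund}$ with $\rr^{v(\kund)} \oplus \rr^{v(\lund)}$, since by definition $\#\Vert(\kund) = v(\kund)$; this gives the first formula. I then observe that the case-by-case definition of $c(\kund,\lund)$ in the statement matches that of $\dim G^{\kund}_{\lund}$ given just before Definition~\ref{def:int_biass}, because $\tilde{b} = 0 \Longleftrightarrow \lund = \One_b$ and $\tilde{a} = 0 \Longleftrightarrow \kund = \One_a$ (all entries equal to $1$).

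The only substantive point is to verify that the action of $G^{\kund}_{\lund}$ on $J^{\kund}_{\lund}$ is free, so that the quotient drops the dimension by $\dim G^{\kund}_{\lund}$. In the first two cases (one of $\kund, \lund$ vertical) this was already recorded in Definition~\ref{def:up_down_forest}: each factor of $\rr^{\tilde{a}}$ (resp.\ $\rr^{\tilde{b}}$) acts freely on $U^{\kund}$ (resp.\ $D_{\lund}$) by shifting the heights of one nonvertical tree, which contains at least one vertex. In the remaining mixed case, $G^{\kund}_{\lund} = \rr$ acts by simultaneous translation of all heights along $(1,\ldots,1) \in \rr^{v(\kund)+v(\lund)}$; the hypotheses $\tilde{a}, \tilde{b} \geq 1$ force $v(\kund), v(\lund) \geq 1$, so this direction vector is nonzero and the action is free. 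The standard dimension formula for quotients of vector spaces by free linear actions then yields the third formula.

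Finally, the second formula is immediate from Definition~\ref{def:int_biass}: when $\dim G^{\kund}_{\lund}=1$, $K^{\kund}_{\lund}=\widetilde{K}^{\kund}_{\lund}$ has dimension $v(\kund)+v(\lund)-1$; otherwise $K^{\kund}_{\lund}=\emptyset$ and there is nothing to check. I expect no real obstacle: the whole proof is a matter of unpacking $i^{\kund}_{\lund}$ and the definition of the symmetry group, with the only care needed being the bookkeeping of the verticality equivalences and the freeness argument in the mixed case.
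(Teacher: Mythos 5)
Your proposal is correct and matches the paper exactly: the paper simply declares the proposition ``immediate from the definitions,'' and your argument is precisely the expected unpacking of the identification $i^{\kund}_{\lund}$, the definition of $G^{\kund}_{\lund}$, and the freeness of the translation action. Nothing is missing.
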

\arnaque

\subsection{Partial compactification}
\label{ssec:compactification}

Topologically, ${K}^{\kund}_{\lund}$ and $J^{\kund}_{\lund}$ are just vector spaces, what is interesting is their compactification, and the combinatorics of their boundary. 
They can be compactified to compact manifolds with generalized corners (as in Joyce \cite{Joyce_corners}) $\overline{K}^{\kund}_{\lund}$ and $\overline{J}^{\kund}_{\lund}$, by gluing to them moduli spaces of ``broken biforests'' of various codimensions. 
We expect these spaces to have polytope realizations, as in  \cite{chapoton2022shuffles}, using ``shuffle products''.

As explained in the introduction, what we mostly care about in Morse and Floer theory are the ``partial compactifications'' $\left(\overline{K}^{\kund}_{\lund}\right)_{\leq 1}$ and $\left(\overline{J}^{\kund}_{\lund} \right)_{\leq 1}$ obtained by only adding the codimension one boundary strata. 
We construct these in detail, and quickly outline how one could construct the full compactifications. Polytope constructions might be more direct for doing so, though our constructions are more transparently related to Morse/Floer theory applications.

The important feature of these spaces is that they are ``face-coherent'', i.e. their boundaries are products of themselves. This is what allows one to write down coherent relations for the maps they will define, leading to the notions in Section~\ref{sec:bialg_bimod_mph}.

One can ``glue'' multi-indices as in Figure~\ref{fig:glueing_multind}:

\begin{figure}[!h]
    \centering
    \def\svgwidth{.50\textwidth}
\begingroup%
  \makeatletter%
  \providecommand\color[2][]{%
    \errmessage{(Inkscape) Color is used for the text in Inkscape, but the package 'color.sty' is not loaded}%
    \renewcommand\color[2][]{}%
  }%
  \providecommand\transparent[1]{%
    \errmessage{(Inkscape) Transparency is used (non-zero) for the text in Inkscape, but the package 'transparent.sty' is not loaded}%
    \renewcommand\transparent[1]{}%
  }%
  \providecommand\rotatebox[2]{#2}%
  \newcommand*\fsize{\dimexpr\f@size pt\relax}%
  \newcommand*\lineheight[1]{\fontsize{\fsize}{#1\fsize}\selectfont}%
  \ifx\svgwidth\undefined%
    \setlength{\unitlength}{229.60629921bp}%
    \ifx\svgscale\undefined%
      \relax%
    \else%
      \setlength{\unitlength}{\unitlength * \real{\svgscale}}%
    \fi%
  \else%
    \setlength{\unitlength}{\svgwidth}%
  \fi%
  \global\let\svgwidth\undefined%
  \global\let\svgscale\undefined%
  \makeatother%
  \begin{picture}(1,0.51851852)%
    \lineheight{1}%
    \setlength\tabcolsep{0pt}%
    \put(0,0){\includegraphics[width=\unitlength,page=1]{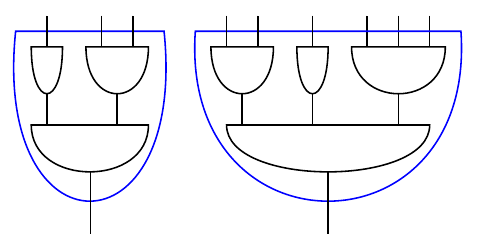}}%
    \put(0.02911351,0.09464317){\color[rgb]{0,0,1}\makebox(0,0)[lt]{\lineheight{1.25}\smash{\begin{tabular}[t]{l}$\kund$\end{tabular}}}}%
    \put(-0.05292602,0.19780139){\color[rgb]{0,0,0}\makebox(0,0)[lt]{\lineheight{1.25}\smash{\begin{tabular}[t]{l}$\kund^0$\end{tabular}}}}%
    \put(-0.05376885,0.35810106){\color[rgb]{0,0,0}\makebox(0,0)[lt]{\lineheight{1.25}\smash{\begin{tabular}[t]{l}$\kund^1$\end{tabular}}}}%
  \end{picture}%
\endgroup%

      \caption{Gluing $\kund^1=(1,2,2,1,3)$ on top of $\kund^0=(2,3)$ gives $\kund = \kund^1 \sharp \kund^0 = (3,6)$.}
      \label{fig:glueing_multind}
\end{figure}

\begin{defi}\label{def:glueing_mult_ind} Let $\kund^0 , \kund^1$ be two multi-indices such that $\abs{\kund^0} = n(\kund^1)$. Let ``\emph{$\kund^1$ glued on top of $\kund^0$}'' be the multi-index defined as:
\e
\kund^1 \sharp \kund^0 = ( k^1_1 + \cdots + k^1_{k^0_1} , k^1_{k^0_1 +1} + \cdots + k^1_{k^0_1 + k^0_2}, \cdots , k^1_{k^0_1 +\cdots + k^0_{a^0-1} +1} + \cdots + k^1_{a^1}).
\e
\end{defi}
We have $\abs{\kund^1 \sharp \kund^0} = \abs{\kund^1 } $, $n(\kund^1 \sharp \kund^0) = n(\kund^0) $, and this operation is associative. Moreover, one has a natural identification 
\e\label{eq:split_set_vertices}
\Vert(\kund^1 \sharp \kund^0) \simeq  \Vert(\kund^1 ) \cup \Vert( \kund^0) .
\e
Indeed, one has $\Vert(\kund^1 ) \subset \Vert(\kund^1 \sharp \kund^0)$, and $\Vert(\kund^1 \sharp \kund^0) \setminus \Vert(\kund^1 )$ is in an order-preserving one-to-one correspondence with $\Vert( \kund^0)$.

In particular, given $\kund$, there are $2^{v(\kund)}$ ways to split $\kund$ as $\kund^1 \sharp \kund^0$, which corresponds to the number of ways of splitting  $\Vert(\kund)$ into two subsets.

It follows from (\ref{eq:split_set_vertices}) and the identification (\ref{eq:identif_bimult}) that one has a natural (unoriented, the sign change will be computed in Section~\ref{ssec:orient}) identification 
\e
J^{\kund}_{\lund} \simeq J^{\kund^0}_{\lund^0} \oplus J^{\kund^1}_{\lund^1}.
\e

Denote
\ea 
v^i &= (1, \ldots , 1) \in G^{\kund^i}_{\lund^i} \subset J^{\kund^i}_{\lund^i}\simeq \rr^{v(\kund^i) + v(\lund^i)},\text{ and} \\ 
v &= v^0+ v^1 = (1, \ldots , 1) \in G^{\kund}_{\lund} \subset J^{\kund}_{\lund}. 
\ea

If $c(\kund^1, \lund^1)=1$, then one has $J^{\kund^1}_{\lund^1} \simeq \rr v^1 \oplus K^{\kund^1}_{\lund^1}$. Call the resulting identification the gluing map:
\ea
g^{\kund^0,\kund^1}_{\lund^0,\lund^1; 0} \colon \rr \times J^{\kund^0}_{\lund^0} \times K^{\kund^1}_{\lund^1} &\to J^{\kund}_{\lund} \\ 
(h, B_0, B_1) &\mapsto h v_1  + B_0+ B_1 .
\ea
Observe that when the parameter $h$ is sufficiently large, this corresponds to gluing the biforest of $K^{\kund^1}_{\lund^1}$ (shifted up by $h$) on top of the one of $J^{\kund^0}_{\lund^0}$, as in Figure~\ref{fig:glueing_biforest}.

\begin{figure}[!h]
    \centering
    \def\svgwidth{.50\textwidth}
\begingroup%
  \makeatletter%
  \providecommand\color[2][]{%
    \errmessage{(Inkscape) Color is used for the text in Inkscape, but the package 'color.sty' is not loaded}%
    \renewcommand\color[2][]{}%
  }%
  \providecommand\transparent[1]{%
    \errmessage{(Inkscape) Transparency is used (non-zero) for the text in Inkscape, but the package 'transparent.sty' is not loaded}%
    \renewcommand\transparent[1]{}%
  }%
  \providecommand\rotatebox[2]{#2}%
  \newcommand*\fsize{\dimexpr\f@size pt\relax}%
  \newcommand*\lineheight[1]{\fontsize{\fsize}{#1\fsize}\selectfont}%
  \ifx\svgwidth\undefined%
    \setlength{\unitlength}{229.60629921bp}%
    \ifx\svgscale\undefined%
      \relax%
    \else%
      \setlength{\unitlength}{\unitlength * \real{\svgscale}}%
    \fi%
  \else%
    \setlength{\unitlength}{\svgwidth}%
  \fi%
  \global\let\svgwidth\undefined%
  \global\let\svgscale\undefined%
  \makeatother%
  \begin{picture}(1,0.59259259)%
    \lineheight{1}%
    \setlength\tabcolsep{0pt}%
    \put(0,0){\includegraphics[width=\unitlength,page=1]{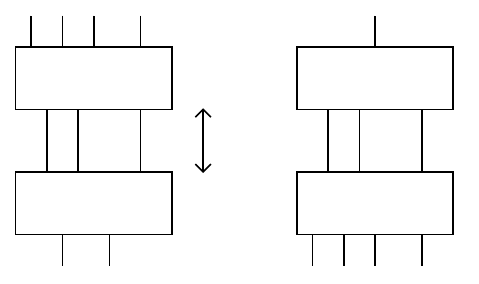}}%
    \put(0.45464035,0.2712962){\color[rgb]{0,0,0}\makebox(0,0)[lt]{\lineheight{1.25}\smash{\begin{tabular}[t]{l}$h$\end{tabular}}}}%
    \put(0.14881549,0.15473598){\color[rgb]{0,0,0}\makebox(0,0)[lt]{\lineheight{1.25}\smash{\begin{tabular}[t]{l}$\Uund^0$\end{tabular}}}}%
    \put(0.1479673,0.41782779){\color[rgb]{0,0,0}\makebox(0,0)[lt]{\lineheight{1.25}\smash{\begin{tabular}[t]{l}$\Uund^1$\end{tabular}}}}%
    \put(0.73687714,0.15216297){\color[rgb]{0,0,0}\makebox(0,0)[lt]{\lineheight{1.25}\smash{\begin{tabular}[t]{l}$\Dund^0$\end{tabular}}}}%
    \put(0.73569902,0.41730185){\color[rgb]{0,0,0}\makebox(0,0)[lt]{\lineheight{1.25}\smash{\begin{tabular}[t]{l}$\Dund^1$\end{tabular}}}}%
  \end{picture}%
\endgroup%

      \caption{Gluing two biforests.}
      \label{fig:glueing_biforest}
\end{figure}

Likewise, if $c(\kund^0, \lund^0)=1$, then one has $J^{\kund^0}_{\lund^0} \simeq \rr v^0 \oplus K^{\kund^0}_{\lund^0}$. Identify $\rr \simeq \rr (-v^0)$, and call the resulting identification the gluing map:
\ea
g^{\kund^0,\kund^1}_{\lund^0,\lund^1; 1} \colon \rr \times K^{\kund^0}_{\lund^0} \times J^{\kund^1}_{\lund^1} &\to J^{\kund}_{\lund}\\ 
(h, B_0, B_1) &\mapsto -h v_0  + B_0+ B_1 .
\ea
This corresponds to gluing the biforest of $K^{\kund^0}_{\lund^0}$, shifted down by $h\in \rr$, below the one of $J^{\kund^1}_{\lund^1}$.

Assume now that $c(\kund, \lund)=c(\kund^0, \lund^0)=c(\kund^1, \lund^1)=1$, then let the gluing map 
\e
g^{\kund^0,\kund^1}_{\lund^0,\lund^1} \colon \rr \times K^{\kund^0}_{\lund^0} \times K^{\kund^1}_{\lund^1} \to K^{\kund}_{\lund}.
\e
be obtained by composing the map 
\ea
\rr \times K^{\kund^0}_{\lund^0} \times K^{\kund^1}_{\lund^1} &\to J^{\kund}_{\lund}, \\ (h, B_0, B_1) &\mapsto h v_1  + B_0+ B_1 ,
\ea
with the projection $J^{\kund}_{\lund}\to K^{\kund}_{\lund}$. 
Then define the partial compactifications:
\ea
 \left( \overline{K}^{\kund}_{\lund} \right)_{\leq 1} &=  \left( {K}^{\kund}_{\lund} \cup \bigcup_{\begin{subarray}{c} \kund = \kund^1 \sharp \kund^0 \\ \lund = \lund^0 \sharp \lund^1 \end{subarray}} (-\infty, + \infty] \times {K}^{\kund^0}_{\lund^0}\times {K}^{\kund^1}_{\lund^1} \right) / \sim \\
 \left( \overline{J}^{\kund}_{\lund} \right)_{\leq 1} &=  \left( {J}^{\kund}_{\lund} \cup \bigcup_{\begin{subarray}{c} \kund = \kund^1 \sharp \kund^0 \\ \lund = \lund^0 \sharp \lund^1 \end{subarray}} (-\infty, + \infty] \times \left( {J}^{\kund^0}_{\lund^0}\times {K}^{\kund^1}_{\lund^1} \cup {K}^{\kund^0}_{\lund^0}\times {J}^{\kund^1}_{\lund^1}  \right) \right) / \sim ,
\ea
Where $\sim$ identifies points with their image under the gluing maps. 
These are manifolds with boundary:

\ea
\partial \left( \overline{K}^{\kund}_{\lund} \right)_{\leq 1} &=   \bigcup_{\begin{subarray}{c} \kund = \kund^1 \sharp \kund^0 \\ \lund = \lund^0 \sharp \lund^1 \end{subarray}} (-1)^{ \rho  } \cdot {K}^{\kund^0}_{\lund^0}\times {K}^{\kund^1}_{\lund^1} , \label{eq:bdry_K}\\
 \partial \left( \overline{J}^{\kund}_{\lund} \right)_{\leq 1} &=  \bigcup_{\begin{subarray}{c} \kund = \kund^1 \sharp \kund^0 \\ \lund = \lund^0 \sharp \lund^1 \end{subarray}} \left( (-1)^{ \rho_0  } \cdot {J}^{\kund^0}_{\lund^0}\times {K}^{\kund^1}_{\lund^1} \cup (-1)^{ \rho_1  } \cdot {K}^{\kund^0}_{\lund^0}\times {J}^{\kund^1}_{\lund^1}  \right) \label{eq:bdry_J},
\ea
where ${ \rho = \rho^{\kund^0,\kund^1}_{\lund^0,\lund^1}  }, { \rho_0 = \rho^{\kund^0,\kund^1}_{\lund^0,\lund^1;0}  }, { \rho_1 = \rho^{\kund^0,\kund^1}_{\lund^0,\lund^1;1}  } \in \Z{2}$ stand for the orientations of the gluing maps ${ g^{\kund^0,\kund^1}_{\lund^0,\lund^1}  }, { g^{\kund^0,\kund^1}_{\lund^0,\lund^1;0}  }, { g^{\kund^0,\kund^1}_{\lund^0,\lund^1;1}  } $, which will appear in the coherence relations of  $f$-bialgebras 
(\ref{eq:R_kund_lund})  (resp. of morphisms of $f$-algebras (\ref{eq:M_kund_lund_mph})); and which we are going to determine now.

\subsection{Orientations and signs}
\label{ssec:orient}

Given a multi-index $\kund = (k_1, \ldots , k_a)$, leaves of the corresponding forest can be indexed in two different ways. Either by a pair of integers $(i,i')$, with $1\leq i \leq a$ and $1\leq i' \leq k_i$, corresponding to the $i'$-th leaf of the $i$-th tree; or by a single integer $h$ such that $1\leq h \leq \abs{\kund}$, corresponding to the position in the overall set of leaves. These are related by $h= s_{\kund}(i,i')$, with:
\e\label{eq:s_kund}
s_{\kund}(i,i') = k_1 + \cdots + k_{i-1} + i'.
\e
If $h= s_{\kund}(i,i')$, let $v_{\geq h}(\kund)$ stand for ``the number of vertices at the right side of $h$'':
\e
v_{\geq h}(\kund) = \mathrm{Card} \left(\Vert (\kund) \cap [ h,+\infty ) \right) 
= (k_i - i') + (k_{i+1} - 1) + \cdots +(k_{a} - 1).
\e
If $\kund = \kund^1 \sharp \kund^0$, recall that $\Vert (\kund) \simeq \Vert (\kund^0) \cup \Vert (\kund^1)$. Since these are ordered sets, it corresponds to a permutation of $\left\lbrace 1, \ldots , v(\kund) \right\rbrace$. Observe that its signature is given by $(-1)^{\heartsuit^{\kund^1}_{\kund^0}}$, with
\e\label{eq:heartsuit}
\heartsuit^{\kund^1}_{\kund^0} = \sum_{h = 1}^{a^1} (k^1_{h} -1) v_{\geq h}(\kund^0).
\e
Indeed,  one can go from  $\Vert (\kund^0) \cup \Vert (\kund^1) $ to $\Vert (\kund)$ by moving successively the vertices of $\Vert (\kund^1)$ corresponding to the tree $h$ in front of those of $\kund^0$ at the right of $h$: this contributes to $(k^1_{h} -1) v_{\geq h}(\kund^0)$. It follows that:

\begin{lemma}\label{lem:signs_glueing_forests}Assume $\kund = \kund^1 \sharp \kund^0$ and $\lund = \lund^0 \sharp \lund^1$, then, as oriented vector spaces:
\ea
U^{\kund} &\simeq (-1)^{\heartsuit^{\kund^1}_{\kund^0}} \cdot U^{\kund^0}  \times U^{\kund^1}, \label{eq:sign_glueing_U}\\
D_{\lund} &\simeq  (-1)^{\heartsuit^{\lund^0}_{\lund^1} + v(\lund^0) v(\lund^1)}  \cdot D_{\lund^0}  \times D_{\lund^1},\label{eq:sign_glueing_D}\\
J^{\kund}_{\lund} &\simeq  (-1)^{\heartsuit^{\kund^1}_{\kund^0} + \heartsuit^{\lund^0}_{\lund^1} + v(\lund^0) ( v(\kund^1) + v(\lund^1) )}  \cdot  J^{\kund^0}_{\lund^0}\times J^{\kund^1}_{\lund^1}\label{eq:sign_glueing_J}
\ea

\end{lemma}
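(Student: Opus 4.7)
The plan is to reduce all three identities to a permutation-of-ordered-sets computation plus, at most, one Koszul swap. By~(\ref{eq:identif_bimult}), $U^{\kund}$, $D_{\lund}$ and $J^{\kund}_{\lund}$ are oriented by listing vertex heights in the natural orders of $\Vert(\kund)$, $\Vert(\lund)$ and $\Vert(\kund) \cup \Vert(\lund)$ respectively. In particular, as oriented vector spaces, $U^{\kund} = \rr^{\Vert(\kund)}$, $D_{\lund} = \rr^{\Vert(\lund)}$, and $J^{\kund}_{\lund} = U^{\kund} \times D_{\lund}$. Each claimed sign is thus determined by comparing one of these natural orders with a concatenated order coming from the splitting of the vertex set dictated by the $\sharp$-decomposition.

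The identity~(\ref{eq:sign_glueing_U}) is immediate from the combinatorial fact recalled in the paragraph preceding the lemma: since $\kund = \kund^1 \sharp \kund^0$, the bijection $\Vert(\kund) \simeq \Vert(\kund^0) \cup \Vert(\kund^1)$ relates the natural order on $\Vert(\kund)$ to the concatenated order ``$\Vert(\kund^0)$ then $\Vert(\kund^1)$'' by a permutation of signature $(-1)^{\heartsuit^{\kund^1}_{\kund^0}}$. Read at the level of the coordinate spaces $\rr^{\Vert(\kund^i)}$, this is exactly the claim.

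For (\ref{eq:sign_glueing_D}), the plan is to apply the same fact, being careful that the paper's convention for descending splittings is $\lund = \lund^0 \sharp \lund^1$, so in the $\sharp$-formalism it is now $\lund^0$ that plays the role of the ``upper'' multi-index (i.e.~of $\kund^1$) and $\lund^1$ the role of the ``lower'' one (i.e.~of $\kund^0$). The combinatorial fact thus gives $D_{\lund} \simeq (-1)^{\heartsuit^{\lund^0}_{\lund^1}} \, D_{\lund^1} \times D_{\lund^0}$. To match the ordering $D_{\lund^0} \times D_{\lund^1}$ used in the statement, one then swaps the two oriented factors, producing the extra Koszul sign $(-1)^{v(\lund^0) v(\lund^1)}$.

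Finally, (\ref{eq:sign_glueing_J}) follows by substituting (\ref{eq:sign_glueing_U}) and (\ref{eq:sign_glueing_D}) into $J^{\kund}_{\lund} = U^{\kund} \times D_{\lund}$ and reordering the four resulting factors into the shape $J^{\kund^0}_{\lund^0} \times J^{\kund^1}_{\lund^1} = (U^{\kund^0} \times D_{\lund^0}) \times (U^{\kund^1} \times D_{\lund^1})$. The only non-trivial reordering is the swap of the adjacent blocks $U^{\kund^1}$ and $D_{\lund^0}$, which contributes a Koszul sign $(-1)^{v(\kund^1) v(\lund^0)}$. Summing with the two previously computed exponents gives $\heartsuit^{\kund^1}_{\kund^0} + \heartsuit^{\lund^0}_{\lund^1} + v(\lund^0)(v(\kund^1) + v(\lund^1))$, as stated. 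The computation is essentially bookkeeping; the one point that requires vigilance, and that I would treat as the main obstacle, is the asymmetry of the $\sharp$-convention between the ascending and descending sides, which is precisely what forces the $(-1)^{v(\lund^0) v(\lund^1)}$ correction in (\ref{eq:sign_glueing_D}).
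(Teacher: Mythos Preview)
Your proof is correct and follows essentially the same approach as the paper: both deduce (\ref{eq:sign_glueing_U}) directly from the signature computation (\ref{eq:heartsuit}), obtain (\ref{eq:sign_glueing_D}) by the same argument followed by the Koszul swap of the $\lund^0$ and $\lund^1$ blocks, and then derive (\ref{eq:sign_glueing_J}) by substituting into $J^{\kund}_{\lund} = U^{\kund} \times D_{\lund}$ and exchanging the adjacent factors $U^{\kund^1}$ and $D_{\lund^0}$. Your explicit remark about the asymmetry of the $\sharp$-convention is a helpful clarification but does not change the argument.
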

\begin{proof}Equation (\ref{eq:sign_glueing_U}) follows from  (\ref{eq:heartsuit}). 
Identification (\ref{eq:sign_glueing_D}) follows from (\ref{eq:sign_glueing_U}), plus exchanging positions of $\lund^0$ and $\lund^1$, which adds $v(\lund^0) v(\lund^1)$. 
Finally, from (\ref{eq:sign_glueing_D}) and (\ref{eq:sign_glueing_U}):
\ea
J^{\kund}_{\lund} &\simeq U^{\kund} \times D_{\lund}\\
 &\simeq (-1)^{\heartsuit^{\kund^1}_{\kund^0} + \heartsuit^{\lund^0}_{\lund^1} + v(\lund^0) v(\lund^1)} \cdot U^{\kund^0}  \times U^{\kund^1} \times D_{\lund^0}  \times D_{\lund^1} \nonumber
\ea
which proves (\ref{eq:sign_glueing_J}) after exchanging $U^{\kund^1}$ and $D_{\lund^0}$.
\end{proof}

\begin{prop}\label{prop:or_glueing_maps} With $x^i = v(\kund^i)$, $y^i = v(\lund^i)$, $\heartsuit_k = \heartsuit^{\kund^1}_{\kund^0}$, $\heartsuit_l = \heartsuit^{\lund^0}_{\lund^1}$, the orientations of the gluing maps are given by:
\ea
{\rho} &= \heartsuit_k + \heartsuit_l + y^0 (x^1 + y^1) + x^0 + y^0 +1 ,\label{eq:sign_glueing_BF} \\
{\rho_0} &= \heartsuit_k + \heartsuit_l + y^0 (x^1 + y^1) + x^0 + y^0 \label{eq:sign_glueing_GBF_0} \\
{\rho_1} &= \heartsuit_k + \heartsuit_l + y^0 (x^1 + y^1) +1 .\label{eq:sign_glueing_GBF_1}
\ea

\end{prop}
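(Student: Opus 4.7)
The plan is to reduce all three computations to Lemma~\ref{lem:signs_glueing_forests} by factoring each gluing map through the splittings $J^{\kund^i}_{\lund^i} = \rr v_i \oplus K^{\kund^i}_{\lund^i}$ of Definition~\ref{def:int_biass}. Writing $\sigma = \heartsuit_k + \heartsuit_l + y^0(x^1+y^1)$ for the sign appearing in \eqref{eq:sign_glueing_J}, the task becomes to compute $\rho_0 - \sigma$, $\rho_1 - \sigma$, and $\rho - \sigma$ modulo $2$ by identifying the source of each gluing map with a rewriting of $J^{\kund^0}_{\lund^0} \oplus J^{\kund^1}_{\lund^1}$ and keeping track of transpositions of oriented blocks.

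For $\rho_0$ I would split off $\rr v_1$ from $J^{\kund^1}_{\lund^1}$ and move it to the front past the $(x^0+y^0)$-dimensional block $J^{\kund^0}_{\lund^0}$, matching the source order of $g_0$. This single transposition contributes $(-1)^{x^0+y^0}$, while $h \mapsto h v_1$ is orientation-preserving since $v_1$ is a positive generator; this yields $\rho_0 = \sigma + x^0 + y^0$. For $\rho_1$ I would instead split $J^{\kund^0}_{\lund^0} = \rr v_0 \oplus K^{\kund^0}_{\lund^0}$; the resulting order already matches the source of $g_1$, so no transposition is required and the only extra sign comes from the orientation-reversing $h \mapsto -h v_0$, giving $\rho_1 = \sigma + 1$. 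For $\rho$ both splittings are needed, producing the identification
\e
J^{\kund}_{\lund} \simeq (-1)^{\sigma + x^0 + y^0}\, \rr v_1 \oplus \rr v_0 \oplus K^{\kund^0}_{\lund^0} \oplus K^{\kund^1}_{\lund^1} .
\e
The key algebraic move is then to replace the $2$-plane frame $(v_1, v_0)$ with $(v, v_1)$, where $v = v_0 + v_1$: a direct $2 \times 2$ determinant gives $(v_1, v_0) = -(v, v_1)$ as oriented frames, producing the missing ``$+1$'' and rewriting $J^{\kund}_{\lund}$ as $(-1)^{\sigma + x^0 + y^0 + 1}$ times $\rr v \oplus W$ with $W = \rr v_1 \oplus K^{\kund^0}_{\lund^0} \oplus K^{\kund^1}_{\lund^1}$.

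It remains to compare $W$ with the canonical complement $K^{\kund}_{\lund}$ via the projection $\pi$ along $\rr v = G^{\kund}_{\lund}$. Since $\pi(f) - f \in \rr v$ for every $f \in W$, the change-of-basis matrix from any frame $(v, f_1, \ldots, f_m)$ of $\rr v \oplus W$ to $(v, \pi(f_1), \ldots, \pi(f_m))$ of $\rr v \oplus K^{\kund}_{\lund}$ is unipotent upper-triangular, hence has determinant $1$; so $\pi|_W$ carries the orientation on $W$ induced by the splitting $J^{\kund}_{\lund} = \rr v \oplus W$ precisely to that of $K^{\kund}_{\lund}$. Composing with the tautologically orientation-preserving identification $\rr \times K^{\kund^0}_{\lund^0} \times K^{\kund^1}_{\lund^1} \simeq W$ then delivers \eqref{eq:sign_glueing_BF}. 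The main obstacle I anticipate is this last step, namely justifying that projection along $\rr v$ contributes no spurious sign and that the replacement $(v_1, v_0) \leadsto (v, v_1)$ is the correct way to bring the inherited decomposition into the normal form of Definition~\ref{def:int_biass}; the remainder is routine dimension-counting of transpositions.
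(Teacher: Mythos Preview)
Your proposal is correct and follows essentially the same route as the paper's proof: reduce to the splitting sign $\sigma$ of Lemma~\ref{lem:signs_glueing_forests}, then track the transpositions needed to bring the blocks into the source order of each gluing map. The only cosmetic difference is in the treatment of $\rho$: the paper moves $\rr v^1$ only past $K^{\kund^0}_{\lund^0}$ (contributing $\dim K^{\kund^0}_{\lund^0} = x^0 + y^0 - 1$) and then performs the determinant-$1$ change of basis $(v^0, v^1) \leadsto (v^0+v^1, v^1)$, whereas you move $\rr v_1$ all the way past $\rr v_0$ as well (contributing $x^0 + y^0$) and then compensate with the determinant-$(-1)$ change of basis $(v_1, v_0) \leadsto (v, v_1)$; the net sign is identical. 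Your explicit unipotent-matrix justification for why the projection along $\rr v$ introduces no further sign is a point the paper leaves implicit.
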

\begin{proof} 
Let $K = {K}^{\kund}_{\lund}$, $K^i = {K}^{\kund^i}_{\lund^i}$, $i=0, 1$; and likewise let $J = {J}^{\kund}_{\lund}$, $J^i = {J}^{\kund^i}_{\lund^i}$.

Start with $\rho_1$, and assume that $c(\kund^0, \lund^0)=1$. The gluing map is given by composing the isomorphisms
\ea
J &\simeq J^0 \oplus J^1 \\
 &\simeq \rr (-v^0)  \oplus K^0 \oplus J^1 .
\ea
From (\ref{eq:sign_glueing_J}), the isomorphism on the first line affects sign by $\heartsuit_k + \heartsuit_l + y^0 (k^1 + y^1)$. The second line changes the sign, therefore we get $\rho_1$.

For $\rho_0$, assuming $c(\kund^1, \lund^1)=1$, The corresponding gluing map is given by:
\ea
J &\simeq J^0 \oplus J^1 \\
 &\simeq   J^0  \oplus \rr v^1 \oplus K^1 \\
 &\simeq   \rr v^1 \oplus J^0 \oplus K^1 ,
\ea
which affects signs by $\heartsuit_k + \heartsuit_l + y^0 (x^1 + y^1) + \dim J^0 $.

For $\rho$, write 
\ea
\rr (v^0 + v^1) \oplus K &\simeq    \rr v^0 \oplus K^0  \oplus \rr v^1 \oplus K^1 \\
 &\simeq    \rr v^0 \oplus \rr v^1 \oplus K^0   \oplus K^1 \\
 &\simeq    \rr (v^0 + v^1)\oplus \rr v^1 \oplus K^0   \oplus K^1 ,
\ea
which gives  $\rho = \heartsuit_k + \heartsuit_l + y^0 (x^1 + y^1) + \dim K^0 + 0$.
\end{proof}

\subsection{Higher codimension corner strata}
\label{ssec:higher_codim_corner_strata}

The partial compactifications $\left( \overline{K}^{\kund}_{\lund} \right)_{\leq 1}$ and $\left( \overline{J}^{\kund}_{\lund} \right)_{\leq 1}$ are noncompact manifolds with boundary. One can compactify them by adding inductively higher corner strata: typically, if $\kund = \kund^n \sharp \cdots \sharp \kund^0$ and $\lund = \lund^0 \sharp \cdots \sharp \lund^n$, one should have  \emph{in good cases} some gluing maps of the form
\e
\left( (-\infty, + \infty]^c \setminus \left\lbrace + \infty \right\rbrace^c \right) \times {K}^{\kund^0}_{\lund^0} \times \cdots  \times {K}^{\kund^n}_{\lund^n}  \to \left( \overline{K}^{\kund}_{\lund} \right)_{\leq c- 1},
\e
where in most cases $c=n$, and then form inductively
\e
 \left( \overline{K}^{\kund}_{\lund} \right)_{\leq c} = \left( \left( \overline{K}^{\kund}_{\lund} \right)_{\leq c- 1} \cup \bigcup \left( (-\infty, + \infty]^c \setminus \left\lbrace + \infty \right\rbrace^c \right) \times {K}^{\kund^0}_{\lund^0} \times \cdots  \times {K}^{\kund^n}_{\lund^n}  \right) /\sim .
\e
However, this is slightly more subtle than one might think. In particular, for some choices of multi-indices one has $c<n$, and the local corner structure might be more complicated than the standard corner $(-\infty, + \infty]^c$, as is already the case for the multiplihedra $J_4 \simeq \overline{K}^4_2$, see for ex.  \cite[Fig.~11]{MauWoodwardrealiz}. More details about these subtleties can be found in the first arXiv version of this paper.

\begin{remark}\label{rem:generalized_bimult} Both $K=J(0)$ and $J= J(1)$ are part of a larger family $J(n)$ of \emph{$n$-grafted $f$-multiplihedra}, defined as moduli spaces of biforests with $n$ grafting levels. These  would correspond to sequences of group morphisms
\e
\xymatrix{G_0 \ar[r]^{\phi_1} & G_1 \ar[r]^{\phi_2} & G_2  \ar[r]^{\phi_3}   & \cdots  \ar[r]^{\phi_n}& G_n}.
\e
Its boundary strata would include products of the form
\e
J(n_0)^{\kund^0}_{\lund^0} \times \cdots \times J(n_m)^{\kund^m}_{\lund^m} ,
\e
with
\ea
\kund &= \kund^m \sharp \cdots \sharp \kund^0, \\
\lund &= \lund^0 \sharp \cdots \sharp \lund^m, \\
n &= n_0 +\cdots + n_m .
\ea

In particular, the codimension one strata is given by:
\e
\partial^1 J(n)^{\kund}_{\lund} =  (n-1) J(n-1)^{\kund}_{\lund}\sqcup \coprod_{\begin{subarray}{c}\kund = \kund^1 \sharp \kund^0  \\ \lund = \lund^0 \sharp \lund^1 \\ n = n_0+n_1 \end{subarray}} J(n_0)^{\kund^0}_{\lund^0} \times  J(n_1)^{\kund^1}_{\lund^1}.
\e
The first part corresponds to grafting levels merging together, and the second part corresponds to breaking. 
The first author will introduce these spaces in more detail in \cite{morse_fun} in order to construct the higher categorical structure of $\fBialg$, and $\infty$-functors from the Morse complex.

\end{remark}

\section{Forest bialgebras, bimodules and morphisms}
\label{sec:bialg_bimod_mph}
We now detail the algebraic structures induced by $\overline{K}_{\leq 1}$ and  $\overline{J}_{\leq 1}$. Let us first set some notations and conventions.
\subsection{Notations and conventions}
\label{ssec:notat_convent}

Let $A$ be a chain complex, that we assume absolutely graded over $\zz$, or possibly relatively graded over some $\Z{2N}$, as long as it is absolutely graded over $\Z{2}$ (except if the ground ring has characteristic 2).  
For concision,  $A^n$ will always stand for $A^{\otimes n} = A\otimes \cdots \otimes A$, equipped with the tensor product differential
\e
\partial_{A^n} = \sum_{i=1}^n  id^{i-1}\otimes \partial \otimes id^{n-i}
\e
that we still denote $\partial$.

The maps we will consider will have several inputs (on top) and outputs (at the bottom), which we think as lying in rectangular grids as in Figure~\ref{fig:rectangular_box}. We will take the convention to order the edges by lexicographical order: at a given level, let the edges of the ascending and descending forests be respectively $e_1, \ldots , e_a$ and $f_1, \ldots , f_b$. Then the edges of the fiber product graph are ordered as:
\e
e_1 f_1,  e_1 f_2, \ldots , e_1 f_b , e_2 f_1, \ldots ,e_2 f_b, \ldots ,e_a f_1,   \ldots , e_a f_b  .
\e

\begin{figure}[!h]
    \centering
    \def\svgwidth{.50\textwidth}
    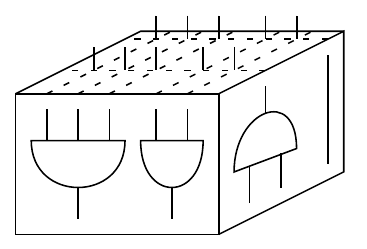
      \caption{The maps $\alpha^{\kund}_{\lund}$ as rectangular boxes.}
      \label{fig:rectangular_box}
\end{figure}

This means that we consider the ``rectangular tensor product'' chain complex $(A^b)^a$. It is identified with $A^{ba}$. When splitting $a=a_1+ \cdots +a_k$ we will use the obvious identification
\e\label{eq:iso_split_a}
(A^b)^a \simeq (A^b)^{a_1} \otimes \cdots \otimes (A^b)^{a_k}.
\e
However, when splitting $b=b_1+ \cdots +b_k$, the identification 
\e\label{eq:iso_split_b}
(A^b)^a \simeq (A^{b_1})^{a} \otimes \cdots \otimes (A^{b_k})^{a}
\e
involves some mixing. Let $\tau^a_b\colon (A^b)^a \to (A^a)^b$ stand for the map induced by the obvious permutation, with sign given by the Koszul sign convention. Without the sign, this corresponds to the permutation $\sigma(a,b)$ appearing in Markl's fractions \cite{Markl_resolution}. 
More explicitly:

\begin{prop}The map  $\tau^a_b\colon (A^b)^a \to (A^a)^b$ is given, on homogeneous elements, by:
\ea
\tau^a_b & \left( (x_{11} \otimes \cdots \otimes x_{1b} ) \otimes \cdots \otimes (x_{a1} \otimes \cdots \otimes x_{ab} ) \right) \label{eq:tau_a_b_init} \\
= & (-1)^{\clubsuit^a_b } \left( (x_{11} \otimes \cdots \otimes x_{a1} ) \otimes \cdots \otimes (x_{1b} \otimes \cdots \otimes x_{ab} ) \right),\nonumber
\ea
with
\e
\clubsuit^a_b  = \sum_{\begin{subarray}{c} 1 \leq i'<i \leq a \\ 1\leq j<j' \leq b \end{subarray}} \abs{x_{ij}} \abs{x_{i'j'}}. 
\e
\end{prop}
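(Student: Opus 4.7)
The statement is a direct application of the Koszul sign rule to an explicit permutation, so the plan is simply to identify the permutation, enumerate its inversions, and read off the sign.

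First I would unpack the source and target identifications. Under the isomorphisms $(A^{b})^{a} \simeq A^{ba} \simeq (A^{a})^{b}$, a homogeneous factor $x_{ij}$ sits in position $p(i,j) = (i-1)b + j$ in the domain (lex order in $(i,j)$, $i$ slow) and in position $q(i,j) = (j-1)a + i$ in the codomain (lex order in $(j,i)$, $j$ slow). The map $\tau^{a}_{b}$ is, up to sign, the permutation $\sigma$ of the $ba$ tensor factors which sends position $p(i,j)$ to position $q(i,j)$. By the Koszul sign convention, the sign of $\tau^{a}_{b}$ on a homogeneous pure tensor is
\[
(-1)^{\mathrm{inv}(\sigma)}, \qquad \mathrm{inv}(\sigma) = \sum_{\{(i,j),(i',j')\}\ \text{inversion}} |x_{ij}|\,|x_{i'j'}|,
\]
where an inversion is an unordered pair of indices whose relative order is reversed by $\sigma$.

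Next I would enumerate the inversions. Take an unordered pair $\{(i,j),(i',j')\}$ with, say, $i' \le i$ and $p(i',j') < p(i,j)$. One checks the four cases $i' < i$ versus $i'=i$ against $j < j'$, $j > j'$, $j = j'$: the pair is inverted by $\sigma$ precisely when $i' < i$ and $j < j'$, i.e.\ when the two indices form an ``antidiagonal'' pair $(i,j)$, $(i',j')$ with $i' < i$ and $j < j'$ (equivalently $(i',j)$ and $(i,j')$ with $i' < i$, $j < j'$). In each of these cases the pair $(i,j)$ comes \emph{before} $(i',j')$ in the domain order but \emph{after} it in the codomain order, contributing $|x_{ij}|\,|x_{i'j'}|$ to $\mathrm{inv}(\sigma)$.

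Summing these contributions gives exactly
\[
\clubsuit^{a}_{b} = \sum_{\substack{1\le i' < i \le a \\ 1\le j < j' \le b}} |x_{ij}|\,|x_{i'j'}|,
\]
which yields (\ref{eq:tau_a_b_init}). The only potential pitfall is a bookkeeping confusion between the two lex orders on $\{1,\dots,a\}\times\{1,\dots,b\}$ when classifying inversions, so I would lay out the case analysis carefully and illustrate with a small example (say $a=b=2$, where the single inversion recovers the expected sign $(-1)^{|x_{12}||x_{21}|}$). No further ingredients are needed.
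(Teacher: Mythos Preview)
Your argument is correct and arrives at the same sign by the same underlying idea (counting Koszul inversions of the transposition permutation), with one small slip in the prose: in your inversion case $i'<i$, $j<j'$, it is $(i',j')$ that precedes $(i,j)$ in the \emph{domain} order (since $i$ is the slow index there) and $(i,j)$ that precedes $(i',j')$ in the \emph{codomain} order (since $j$ is slow there); you have this reversed in the sentence, though the contribution $|x_{ij}||x_{i'j'}|$ and the final formula are unaffected.

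The paper's proof differs only in bookkeeping: instead of classifying all inversions at once via the case analysis on $(i,i',j,j')$, it runs an insertion-sort, moving $x_{11}, x_{21}, x_{31},\ldots$ one by one into their target positions and recording, for each $x_{ij}$, the ``rectangle'' of elements $x_{i'j'}$ with $i'<i$, $j'>j$ that it must pass over. Your direct enumeration is arguably cleaner and makes the inversion set explicit in one stroke; the paper's sequential version makes the geometric picture (each element sliding past a rectangular block) more vivid. Both are standard, equivalent ways to extract the Koszul sign.
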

\begin{proof} Start with the initial configuration in (\ref{eq:tau_a_b_init}), and move the vectors one by one to the final one in the order that they appear in that final configuration. First, $x_{ij}=x_{11}$ is already at the right place : it is exchanged with sign contribution
\e
0 =  \sum_{\begin{subarray}{c} i'<1  \\ 1<j'  \end{subarray}} \abs{x_{11}} \abs{x_{i'j'}}.
\e
Next, $x_{ij}=x_{21}$ is exchanged with $x_{12} \otimes \cdots \otimes x_{1b}$, which affects the sign by
\e
 \sum_{\begin{subarray}{c} i'<2  \\ 1<j'  \end{subarray}} \abs{x_{12}} \abs{x_{i'j'}}.
\e
Then  $x_{ij}=x_{31}$ is exchanged with $(x_{12} \otimes \cdots \otimes x_{1b}) \otimes (x_{22} \otimes \cdots \otimes x_{2b}) $, since $x_{21}$ has already been moved. This adds
\e
 \sum_{\begin{subarray}{c} i'<3  \\ 1<j'  \end{subarray}} \abs{x_{13}} \abs{x_{i'j'}}.
\e
In general, $x_{ij}$ moves in front of the rectangle of $x_{i'j'}$, with  $ i'<i$, $j<j'$.
\end{proof}

Then the identification (\ref{eq:iso_split_b}) is as in the diagram:
\e
\xymatrix{ (A^b)^a \ar[d]^{\tau^a_b} \ar[r] & (A^{b_1})^{a} \otimes \cdots \otimes (A^{b_k})^{a} \\
(A^a)^b \ar[r]  & (A^{a})^{b_1} \otimes \cdots \otimes (A^{a})^{b_k} \ar[u]_{\tau^{b_1}_a \otimes \cdots \otimes \tau^{b_k}_a }}
\e

We will also need to move apart some blocks. If $a=a_1+a_2+a_3$, we will identify 
\e
(A^b)^a\simeq (A^b)^{a_1 + a_3} \otimes (A^b)^{a_2} 
\e
via $\xund_1 \otimes \xund_2 \otimes \xund_3 \mapsto (-1)^{\abs{\xund_2}\abs{\xund_3}} (\xund_1 \otimes \xund_3 ) \otimes \xund_2$ Likewise, if $b=b_1+b_2+b_3$, we will identify 
\e
(A^b)^a\simeq (A^{b_1 + b_3})^a \otimes (A^{a_2})^a 
\e 
similarly, after conjugating with $\tau^a_b$ maps.

Given maps $P_i\colon (A^b)^{a_i} \to (A^d)^{c_i}$ and  $P\colon (A^b)^{a} \to (A^d)^{c}$, with $a = a_1+ \cdots + a_k$ and $c = c_1+ \cdots + c_k$, whenever we write $P \simeq P_1 \otimes \cdots \otimes P_k$, we will mean that the following diagram commutes, where the vertical maps are the identifications (\ref{eq:iso_split_a}):
\e
\xymatrixcolsep{.8in}
\xymatrix{ (A^b)^{a} \ar[d]\ar[r]^P & (A^d)^{c} \ar[d]\\
(A^b)^{a_1} \otimes \cdots \otimes (A^b)^{a_k} \ar[r]^{P_1 \otimes \cdots \otimes P_k} & (A^d)^{c_1} \otimes \cdots \otimes (A^d)^{c_k} 
}
\e
Likewise for maps $P_i\colon (A^{b_i})^{a} \to (A^{d_i})^{c}$ and  $P\colon (A^b)^{a} \to (A^d)^{c}$, with $b = b_1+ \cdots + b_k$, $d = d_1+ \cdots + d_k$, and the identifications (\ref{eq:iso_split_b}).
\subsection{$f$-bialgebras}
\label{ssec:f_bialg}

We can now define the forest version of a bialgebra:

\begin{defi}\label{def:f_bialg} An  \emph{$f$-bialgebra} is a chain complex $A$ with a collection of operations 
\e
{\alpha}^{\kund}_{\lund}\colon (A^b)^{\abs{\kund}} \to (A^{\abs{\lund}})^{a}
\e
of degree $\deg {\alpha}^{\kund}_{\lund} = \dim K^{\kund}_{\lund} = v(\kund) + v(\lund)- 1$, such that: 
\begin{itemize}
\item They satisfy the family of coherence relations:
\e\label{eq:R_kund_lund}\tag{$R^{\kund}_{\lund}$}
0= \sum_{\begin{subarray}{c} \kund = \kund^1 \sharp \kund^0 \\ \lund = \lund^0 \sharp \lund^1 \end{subarray}} (-1)^{\rho} \cdot  {\alpha}^{\kund^0}_{\lund^0}\circ {\alpha}^{\kund^1}_{\lund^1} ,
\e
where $\rho$ is given in Proposition~\ref{prop:or_glueing_maps}. 
It follows from $(R^{\One_a}_{\One_b})$ that $\alpha^{\One_a}_{\One_b}$ is a differential on $(A^b)^a$. 

\item For $a,b\geq 1$, $\alpha^{\One_a}_{\One_b}$ coincides with the tensor product differential induced by the differential $\partial_A$ of $A$. We will refer to this condition as $(T^a_b)$.
\item If $\kund = \One_a$ and $\lund = (1, \ldots 1, l_j , 1, \ldots, 1)$, with $l_j\geq 2$, then
\e\label{eq:V_a_lund}\tag{$V^{a}_{\lund}$}
{\alpha}^{\kund}_{\lund} \simeq id_{(A^{j-1})^a} \otimes {\alpha}^{\kund}_{l_j} \otimes id_{(A^{b-j})^a},
\e
where $\simeq$ is in the sense of (\ref{eq:iso_split_b}).
\item Likewise, if  $\lund = \One_b$ and $\kund = (1, \ldots 1, k_i , 1, \ldots, 1)$, with $k_i\geq 2$, then
\e\label{eq:V_kund_b}\tag{$V^{\kund}_{b}$}
{\alpha}^{\kund}_{\lund} \simeq id_{(A^{b})^{i-1}} \otimes {\alpha}^{k_i}_{\lund} \otimes id_{(A^{b})^{a-i}},
\e
where $\simeq$ is in the sense of (\ref{eq:iso_split_a}).
\item If either $\kund = \One_a$ and $\lund$ has at least two entries $\geq 2$, or vice versa, then ${\alpha}^{\kund}_{\lund} =0$. We will refer to these conditions as $(V^{a}_{\lund})$ and $(V^{\kund}_{b})$ as well.

\item (vertical tree deletion) Suppose that $\lund$ is an almost vertical forest, i.e. only with 1 and 2. Let $\kund$ have a vertical tree at position $i$, i.e. $k_i = 1$, and let $\widehat{\kund} = (k_1, \ldots , \widehat{k_i} , \ldots, k_a)$, then
\e\label{eq:D_kund_i_lund}\tag{$D^{\kund,i}_{\lund}$}
{\alpha}^{\kund}_{\lund} \simeq {\alpha}^{\widehat{\kund}}_{\lund} \otimes  {\widetilde{\alpha}}^{1}_{\lund},
\e
where ${\widetilde{\alpha}}^{1}_{\lund} =  {\widetilde{\alpha}}^{1}_{l_1} \otimes \cdots \otimes {\widetilde{\alpha}}^{1}_{l_b}$, with ${\widetilde{\alpha}}^{1}_{1}= id_A$ and ${\widetilde{\alpha}}^{1}_{2}= \alpha^1_2$.

Likewise, if $\kund$ is an almost vertical forest. Let $\lund$ have a vertical tree at position $i$, i.e. $l_i = 1$, and let $\widehat{\lund} = (l_1, \ldots , \widehat{l_i} , \ldots, l_b)$, then
\e\label{eq:D_kund_lund_i}\tag{$D^{\kund}_{\lund,i}$}
{\alpha}^{\kund}_{\lund} \simeq {\alpha}^{\kund}_{\widehat{\lund}} \otimes  \widetilde{\alpha}^{\kund}_{1},
\e
where ${\widetilde{\alpha}}^{\kund}_{1} =  {\widetilde{\alpha}}^{k_1}_{1} \otimes \cdots \otimes {\widetilde{\alpha}}^{k_a}_{}$, with ${\widetilde{\alpha}}^{1}_{1}= id_A$ and ${\widetilde{\alpha}}^{2}_{1}= \alpha^2_1$.

\end{itemize}
\end{defi}

\begin{remark}The maps ${\widetilde{\alpha}}^{\kund}_{\lund}$ come from the spaces $\widetilde{K}^{\kund}_{\lund}$.
\end{remark}

\begin{remark}The formula for $\rho$ in Proposition~\ref{prop:or_glueing_maps}  give the orientation of the gluing map under the assumption that $c(\kund, \lund) = c(\kund^0, \lund^0) = c(\kund^1, \lund^1) = 1$. Nevertheless, the expression makes sense for any $\kund^0, \lund^0,\kund^1, \lund^1$. In particular, when $c(\kund^0, \lund^0) = 0$ (resp. $c(\kund^1, \lund^1) = 0$), $\rho=1$ (resp. $\rho=v(\kund) + v(\lund) +1 =  \deg \alpha^{\kund}_{\lund} \text{ mod }2$). 
Since we have assumed $\alpha^{\kund}_{\lund} = 0$ for $c(\kund, \lund) \geq 2$, (\ref{eq:R_kund_lund}) can be written equivalently as:
\e
\partial( \alpha^{\kund}_{\lund} ) =  \sum_{\begin{subarray}{c} c(\kund^0, \lund^0)  = 1\\ c(\kund^1, \lund^1) = 1 \end{subarray}} (-1)^{\rho} \cdot  {\alpha}^{\kund^0}_{\lund^0}\circ {\alpha}^{\kund^1}_{\lund^1} ,
\e
Where $\partial$ is the differential on $\mathrm{Hom}( (A^b)^{\abs{\kund}} , (A^{\abs{\lund}})^{a})$ induced by $\partial_A$, and the right hand side  corresponds to summing over the faces of $\partial \left( \overline{K}^{\kund}_{\lund}\right)_{\leq 1}$.

\end{remark}

\begin{prop}
\label{prop:rel_f_bialg_A_inf} Let $(A, {\alpha}^{\kund}_{\lund})$ be an $f$-bialgebra. 
Then, with $m^k = {\alpha}^{k}_{1}$, $(A,m^k)$ is an \Ainf -algebra (with the sign convention below), i.e.
\e\label{eq:Ainf_alg_rel}
0 = \sum_{\begin{subarray}{c} k+1 = k^0 + k^1 \\ 1\leq i \leq k^0 \end{subarray}} (-1)^{k^0 k^1 + i(k^1 +1)} \cdot m^{k^0} \circ \left(id^{\otimes (i-1)} \otimes m^{k^1} \otimes id^{\otimes (k^0 -i)}   \right) .
\e
Likewise, if $A$ is an $f$-bialgebra, 
then with $\delta^l = \alpha^1_l$, $(A,\delta^l)$ is an \Ainf -coalgebra (with the sign convention below), i.e.
\e\label{eq:Ainf_coalg_rel}
0 = \sum_{\begin{subarray}{c} l+1 = l^0 + l^1 \\ 1\leq j \leq l^1 \end{subarray}} (-1)^{(l^0 +1)j +1} \cdot \left(id^{\otimes (j-1)} \otimes \delta^{l^0} \otimes id^{\otimes (l^1 -j)}   \right) \circ \delta^{l^1} .
\e
\end{prop}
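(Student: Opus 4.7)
The plan is to specialize the coherence relations $(R^{\kund}_{\lund})$ of Definition~3.2 to $\kund = (k)$, $\lund = (1)$ for the algebra relation, and to $\kund = (1)$, $\lund = (l)$ for the coalgebra relation; then identify precisely which splittings contribute.

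First, for the algebra case, I would enumerate the allowed splittings $\kund = \kund^1 \sharp \kund^0$, $\lund = \lund^0 \sharp \lund^1$ of $(k)$ and $(1)$. The constraint $n(\kund^0) = n(\kund) = 1$ forces $\kund^0 = (k^0)$; the constraint $|\lund^0| = n(\lund^1) = 1$ forces $\lund^0 = \lund^1 = (1)$; and $\kund^1$ must consist of $k^0$ trees with $|\kund^1| = k$. Condition $(V^{\kund^1}_1)$ (available since $\lund^1 = \One_1$) then eliminates any $\kund^1$ with two or more entries $\geq 2$; in the surviving case $\kund^1 = (1,\ldots,1,k^1,1,\ldots,1)$ with $k^1 \geq 2$ at position $i$, it yields $\alpha^{\kund^1}_{(1)} \simeq id^{i-1} \otimes m^{k^1} \otimes id^{k^0 - i}$, subject to $k^0 + k^1 = k + 1$. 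The remaining ``all-ones'' splittings $\kund^1 = \One_{k^0}$ force $k^0 = k$, and condition $(T^k_1)$ identifies $\alpha^{\One_k}_{\One_1}$ with the tensor differential $\partial_{A^k}$; symmetrically the splitting $\kund^1 = (k)$, $\kund^0 = (1)$ produces $\partial_A \circ m^k$. Together these ``degenerate'' terms recombine with the $k^1 = 1$ and $k^0 = 1$ cases of (\ref{eq:Ainf_alg_rel}) and account for the $A_\infty$-boundary of $m^k$.

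Next, I would compute the sign $\rho$ from Proposition~2.10 in each case. Plugging in $v(\kund^0) = k^0 - 1$, $v(\kund^1) = k - k^0$ and $v(\lund^0) = v(\lund^1) = 0$, one sees that $\heartsuit^{\lund^0}_{\lund^1}$ vanishes while $\heartsuit^{\kund^1}_{\kund^0} = (k^1 - 1)(k^0 - i)$, using $\Vert((k^0)) = \{1,\ldots,k^0 - 1\}$ in the definition of $v_{\geq h}$. A short mod-$2$ check then reduces $\rho = (k^1 - 1)(k^0 - i) + k^0$ to $k^0 k^1 + i(k^1 + 1)$ via the identities $-j \equiv j$ and $k^1 - 1 \equiv k^1 + 1 \pmod{2}$, matching exactly the sign appearing in (\ref{eq:Ainf_alg_rel}).

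The coalgebra relation follows from the symmetric specialization $(R^{(1)}_{(l)})$: the splittings force $\kund^0 = \kund^1 = (1)$ and $\lund^1 = (l^1)$, with $\lund^0$ either all-ones (yielding differential terms via $(T^1_l)$) or having exactly one entry $l^0 \geq 2$ at some position $j$ with $l^0 + l^1 = l + 1$; condition $(V^1_{\lund^0})$ then supplies $\alpha^{(1)}_{\lund^0} \simeq id^{j-1} \otimes \delta^{l^0} \otimes id^{l^1 - j}$, and the analogous sign computation yields $\rho \equiv (l^0 + 1) j + 1 \pmod 2$ as in (\ref{eq:Ainf_coalg_rel}). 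The main obstacle is the delicate sign bookkeeping in Proposition~2.10: one must carefully combine $\heartsuit^{\kund^1}_{\kund^0}$ (resp. $\heartsuit^{\lund^0}_{\lund^1}$) with the dimensional contributions $x^0 + y^0 + 1$, exploit the parity identities above, and verify that the ``degenerate'' splittings (where one subforest is vertical) feed $(T^a_b)$ to reproduce the correct $\partial_A$ and $\partial_{A^k}$ pieces of the $A_\infty$-differential.
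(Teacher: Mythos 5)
Your proposal is correct and follows essentially the same route as the paper: specialize $(R^{(k)}_{(1)})$ (resp. $(R^{(1)}_{(l)})$), use the $(V)$ and $(T)$ conditions to identify the surviving splittings $\kund^1=(1,\dots,1,k^1,1,\dots,1)$, and evaluate $\rho$ via Proposition~\ref{prop:or_glueing_maps} with $\heartsuit^{\kund^1}_{\kund^0}=(k^1-1)(k^0-i)$; your mod-$2$ reductions of the signs, including the coalgebra case $\rho\equiv(l^0+1)j+1$, check out. The only difference is that you spell out the vertical/degenerate splittings feeding $\partial(m^k)$ more explicitly than the paper does, which is harmless.
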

\begin{remark} More generally, one can show that $A^b$, endowed with $m^k = \alpha^k_{\One_b}$, is an \Ainf -algebra in the above sense. Likewise,  $A^a$, endowed with $\delta_l = \tau^a_l \circ \alpha^{\One_a}_{l}$, is an \Ainf -coalgebra.
\end{remark}
\begin{proof}
Take $\kund = (k)$ and $\lund = (1)$. If $\kund =\kund^1  \sharp \kund^0 $ and $ \lund = \lund^0  \sharp \lund^1 $, then one must have $\lund^0 =\lund^1= (1)$, $\kund^0 = (k^0)$, and condition $c(\kund^1, \lund^1)=1$ implies that $\kund^1$ is of the form $\kund^1 = (1, \ldots , 1, k^1, 1, \ldots , 1)$, with $k^1$ at position $i$, $1\leq i \leq k^0 $. One then has:
\ea
v(\lund^0) &= v(\lund^1) = 0, \\
v(\kund^0) &= k^0 -1, \
v(\kund^1) = k^1 -1 , \\
\heartsuit^{\lund^0}_{\lund^1} &= 0,\
\heartsuit^{\kund^1}_{\kund^0} = (k^1 -1)(k^0-i).
\ea
From Proposition~\ref{prop:or_glueing_maps}, it follows that $\rho= k^0 k^1 + i(k^1 +1)\text{ mod }2$.
Finally, $(V^{\kund^1}_1)$ implies that 
\e
\alpha^{\kund^1}_1 =  \left( id^{\otimes (i-1)} \otimes m^{k^1} \otimes id^{\otimes (k^0 -i)}  \right).
\e
The \Ainf -relation (\ref{eq:Ainf_alg_rel}) follows. The proof of the \Ainf -coalgebra statement is analogous.
\end{proof}

Let us now describe $(R^2_2)$ more explicitly. Assume now $\kund = \lund = (2)$.  If $(\kund, \lund) =(\kund^1, \lund^1)  \sharp (\kund^0, \lund^0) $, then (excluding vertical floors) one must have either
\ea
\kund^1 = \lund^0 = (2) ,\\
\kund^0 = \lund^1 = (1) ,
\ea
or
\ea
\kund^1 = \lund^0 = (1,1) ,\\
\kund^0 = \lund^1 = (2) .
\ea
In the first case,  $\rho= 1$; and in the second,  $\rho= 0$. Furthermore, $(D^{(1,1);1}_{(2)})$ and $(D^{(2)}_{(1,1);1})$ give respectively:
\ea
\alpha^{(1,1)}_{(2)} &=\alpha^{1}_{2}  \otimes \alpha^{1}_{2} ,\\
\alpha^{(2)}_{(1,1)} &= (\alpha^{2}_{1} \otimes \alpha^{2}_{1})\circ \tau^2_2 . 
\ea
Therefore, $(R^2_2)$ reads:
\e
\partial (\alpha^2_2) = -\alpha^{1}_{2}  \circ \alpha^{2}_{1} +  (\alpha^{2}_{1} \otimes \alpha^{2}_{1})\circ \tau^2_2 \circ (\alpha^{1}_{2}  \otimes \alpha^{1}_{2}),
\e
which is indeed the Hopf relation of bialgebras up to homotopy, as expected. It follows that the homology of an $f$-bialgebra is a bialgebra. Furthermore:

\begin{prop}
\label{prop:rel_dg_bialg} Say that $(A, {\alpha}^{\kund}_{\lund})$ is a \emph{dg $f$-bialgebra} if $ {\alpha}^{\kund}_{\lund}= 0$ unless $\deg  {\alpha}^{\kund}_{\lund} \leq 0$. 
Then, dg $f$-bialgebras are in one-to-one correspondence with dg bialgebras.
\end{prop}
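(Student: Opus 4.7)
The plan is to first classify the possible nonzero operations in a dg $f$-bialgebra, then show that the coherence relations restricted to these operations reduce exactly to the dg bialgebra axioms. The key observation is that $\deg \alpha^{\kund}_{\lund} = v(\kund) + v(\lund) - 1$, so the dg hypothesis forces $\alpha^{\kund}_{\lund} = 0$ whenever $v(\kund) + v(\lund) \geq 2$. This leaves only two cases to analyze: $v(\kund) + v(\lund) = 0$, for which $\kund = \One_a$, $\lund = \One_b$ and condition $(T^a_b)$ fixes $\alpha^{\One_a}_{\One_b}$ as the tensor differential; and $v(\kund) + v(\lund) = 1$, for which either $\lund = \One_b$ and $\kund$ has a single entry equal to $2$ at some position $i$, or symmetrically for $\kund = \One_a$. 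By iterating $(V^{\kund}_b)$ followed by $(D^{(2)}_{\One_b,j})$, each such operation is determined by $m := \alpha^{2}_{1}$; dually by $(V^{a}_{\lund})$ and $(D^{\kund}_{\One_a,j})$, the other case is determined by $\Delta := \alpha^{1}_{2}$. Thus the data of a dg $f$-bialgebra reduces to the triple $(\partial_A, m, \Delta)$.

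Next, I would specialize the coherence relations $(R^{\kund}_{\lund})$ to verify that they amount exactly to the dg bialgebra axioms. Concretely, $(R^1_1)$ gives $\partial_A^2 = 0$; $(R^2_1)$ and $(R^1_2)$ express that $m$ and $\Delta$ are chain maps; $(R^3_1)$ collapses to strict associativity of $m$ since $\partial(\alpha^3_1) = 0$ (because $\alpha^3_1 = 0$), and $(R^1_3)$ similarly gives strict coassociativity; finally $(R^2_2)$, since $\alpha^2_2 = 0$, reduces via the computation already carried out in the paper to the exact Hopf compatibility $\Delta \circ m = (m \otimes m) \circ \tau^2_2 \circ (\Delta \otimes \Delta)$. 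Conversely, given a dg bialgebra, one defines $\alpha^{\kund}_{\lund}$ by the rules above for $v(\kund) + v(\lund) \leq 1$ and zero otherwise, and checks that the conditions $(T)$, $(V)$, $(D)$ are satisfied by construction and that the two assignments are mutually inverse.

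The main obstacle is verifying that the remaining coherence relations $(R^{\kund}_{\lund})$ with $v(\kund) + v(\lund) \geq 2$ are automatically satisfied, not producing spurious constraints beyond the bialgebra axioms. The argument I would give is that in each summand ${\alpha}^{\kund^0}_{\lund^0}\circ {\alpha}^{\kund^1}_{\lund^1}$ of $(R^{\kund}_{\lund})$, both factors must themselves satisfy $v+v \leq 1$ for the composition to be nonzero; using $(V)$ and $(D)$ to expand each factor into tensor products of copies of $m$, $\Delta$, and identities, the whole relation reduces, after suitable sign bookkeeping using Proposition~\ref{prop:or_glueing_maps} and the Koszul convention for $\tau^a_b$, to an identity expressing one of the three bialgebra axioms (associativity, coassociativity, Hopf compatibility) applied in a single block of the grid with identities in the remaining blocks. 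The tricky part is matching the combinatorial sign $\rho$ with the Koszul signs introduced by the identifications $(V)$ and $(D)$; the $(R^2_2)$ computation already performed in the paper provides a template, and a careful induction on $v(\kund)+v(\lund)$ handles the general case.
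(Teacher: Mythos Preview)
Your proposal is correct and follows essentially the same approach as the paper's proof: extract $(\partial_A,\alpha^2_1,\alpha^1_2)$ and use $(R^2_1),(R^3_1),(R^1_2),(R^1_3),(R^2_2)$ to recover the dg bialgebra axioms, and conversely let $(V)$ and $(D)$ prescribe the degree~$\leq 0$ operations while setting all positive-degree ones to zero. You are in fact more thorough than the paper, which simply asserts that ``setting the positive degree operations equal to zero gives a dg $f$-bialgebra'' without checking the higher $(R^{\kund}_{\lund})$; your observation that $v(\kund^0)+v(\lund^0)+v(\kund^1)+v(\lund^1)=v(\kund)+v(\lund)$ forces all summands to vanish when $v(\kund)+v(\lund)\geq 3$, and that the case $v(\kund)+v(\lund)=2$ reduces via $(V)$ and $(D)$ to a single bialgebra axiom tensored with identities, is exactly the missing verification.
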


\begin{proof}If $(A, {\alpha}^{\kund}_{\lund})$ is a \emph{dg $f$-bialgebra}, then it follows from $(R^2_1)$, $(R^3_1)$, $(R^1_2)$, $(R^1_3)$ and  $(R^2_2)$ that $(A, \partial, \alpha^2_1,\alpha^1_2)$ is a dg bialgebra. Conversely, given a dg bialgebra, relations $(V)$ and $(D)$ prescribe the degree $\leq 0$ maps. Setting the positive degree  operations equal to zero gives a dg $f$-bialgebra.

\end{proof}

In many cases, some maps $\alpha^{\kund}_{\lund}$ must vanish for degree reasons:
\begin{prop}
\label{prop:vanishing_alpha_deg}  If $A$ is $\zz$-graded and is concentrated in degrees 0, ..., $\mathrm{maxdeg}A$, then $\alpha^{\kund}_{\lund}=0$  if $\abs{\kund} > (a \cdot \mathrm{maxdeg} A -1)\abs{\lund} +a+b+1.$
\end{prop}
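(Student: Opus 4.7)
The proof is essentially a one-line degree count, so the plan is short and has no real obstacle. The approach is to use the fact that $A$ is concentrated in non-negative bounded degrees to constrain both the minimum degree of any input and the maximum degree of any output of $\alpha^{\kund}_{\lund}$, then compare these with the intrinsic degree of the map itself.

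Concretely, first I would record the degree of $\alpha^{\kund}_{\lund}$ from its definition:
\[
\deg \alpha^{\kund}_{\lund} = \dim K^{\kund}_{\lund} = v(\kund)+v(\lund)-1 = (\abs{\kund}-a)+(\abs{\lund}-b)-1.
\]
Next, since $A$ is concentrated in degrees $0,\ldots,\mathrm{maxdeg}A$, the codomain $(A^{\abs{\lund}})^a = A^{\otimes a\abs{\lund}}$ is concentrated in degrees $[0,\,a\abs{\lund}\cdot\mathrm{maxdeg}A]$, while any homogeneous element $x$ in the domain $(A^b)^{\abs{\kund}} = A^{\otimes b\abs{\kund}}$ satisfies $\deg x \geq 0$. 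For $\alpha^{\kund}_{\lund}(x)$ to be nonzero, one needs
\[
0 \le \deg x + \deg\alpha^{\kund}_{\lund} \le a\abs{\lund}\cdot \mathrm{maxdeg}A.
\]

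Using $\deg x \geq 0$, a sufficient condition for $\alpha^{\kund}_{\lund}$ to vanish identically is
\[
\deg\alpha^{\kund}_{\lund} > a\abs{\lund}\cdot\mathrm{maxdeg}A.
\]
Substituting the formula from the first step and rearranging gives
\[
\abs{\kund} > a\abs{\lund}\cdot\mathrm{maxdeg}A - \abs{\lund} + a + b + 1 = (a\cdot\mathrm{maxdeg}A - 1)\abs{\lund} + a + b + 1,
\]
which is exactly the hypothesis. Hence under that inequality $\alpha^{\kund}_{\lund}$ is forced to land in degrees strictly above $a\abs{\lund}\cdot\mathrm{maxdeg}A$, so it must be zero on every homogeneous element and therefore zero. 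There is no real obstacle: the only thing to be careful about is keeping track of the correct tensor factors ($b\abs{\kund}$ copies of $A$ in the domain, $a\abs{\lund}$ copies in the codomain) and remembering that since $A$ lives only in non-negative degrees, $\deg x \ge 0$ is automatic and so only the upper bound on the codomain has to be used.
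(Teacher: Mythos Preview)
Your proof is correct and follows essentially the same approach as the paper: compute $\deg\alpha^{\kund}_{\lund} = \abs{\kund}-a+\abs{\lund}-b-1$, use that the domain has minimum degree $\geq 0$ and the codomain has maximum degree $a\abs{\lund}\cdot\mathrm{maxdeg}A$, and rearrange. The paper's proof is slightly terser but the argument is identical.
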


\begin{proof}${\alpha}^{\kund}_{\lund}\colon (A^b)^{\abs{\kund}} \to (A^{\abs{\lund}})^{a}$ has degree $\deg {\alpha}^{\kund}_{\lund}  = v(\kund) + v(\lund)- 1 = \abs{\kund} -a + \abs{\lund} -b -1 $, therefore it is zero if
\e
\deg {\alpha}^{\kund}_{\lund} + \mathrm{mindeg}(A^b)^{\abs{\kund}}  > \mathrm{maxdeg}(A^{\abs{\lund}})^{a}.
\e
Furthermore, $\mathrm{mindeg}(A^b)^{\abs{\kund}} \geq 0$ and $\mathrm{maxdeg}(A^{\abs{\lund}})^{a} = a \abs{\lund} \mathrm{maxdeg}A$. The claim follows.
\end{proof}

\subsection{Morphisms of $f$-bialgebras}
\label{ssec:mph_Ainf_bialg}

Just like the coherence relations of \Ainf -morphisms come from the boundary structure of the multiplihedron, our notion of morphisms between $f$-bialgebras comes from the boundary structure of $J^{\kund}_{\lund}$. 

\begin{defi}\label{def:mph_f_bialg} Let $(A, \alpha^{\kund}_{\lund})$ and  $(B, {\beta}^{\kund}_{\lund})$ be two $f$-bialgebras. A morphism $f\colon A \to B$ is a collection of operations:
\e
f^{\kund}_{\lund} \colon  (A^b)^{\abs{\kund}} \to (B^{\abs{\lund}})^{a}
\e
satisfying the family of coherence relations: 
\e\label{eq:M_kund_lund_mph}\tag{$M^{\kund}_{\lund}$}
0 = \sum_{  \begin{subarray}{c} \kund = \kund^1 \sharp \kund^0 \\ \lund = \lund^0 \sharp \lund^1 \end{subarray}} (-1)^{\rho_0} \cdot  f^{\kund^0}_{\lund^0}\circ {\alpha}^{\kund^1}_{\lund^1}  +  (-1)^{\rho_1} \cdot  {\beta}^{\kund^0}_{\lund^0}\circ f^{\kund^1}_{\lund^1},
\e
Furthermore, $f$ satisfies the simplification relations:
\begin{itemize}
\item If $\lund$ is vertical, then
\e\label{eq:W_kund_b}\tag{$W^{\kund}_{b}$}
f^{\kund}_{\lund}  \simeq 
f^{k_1}_{\lund}  \otimes \cdots \otimes  f^{k_a}_{\lund}.
\e

\item If $\kund$ is vertical, then
\e\label{eq:W_a_lund}\tag{$W^{a}_{\lund}$}
f^{\kund}_{\lund}  \simeq  f^{\kund}_{l_1} \otimes \cdots \otimes f^{\kund}_{l_b} .
\e
\end{itemize}

\end{defi}

\begin{remark}
Since ${\alpha}^{\kund^1}_{\lund^1}  = 0$ if $c(\kund^1, \lund^1 )\geq 2$ and  ${\beta}^{\kund^0}_{\lund^0} = 0$ if $c(\kund^0 , \lund^0)\geq 2$, one can write (\ref{eq:M_kund_lund_mph}) as:
\e
\partial (f^{\kund}_{\lund}) = \sum_{ c(\kund^1, \lund^1 ) =1 } (-1)^{\rho_0} \cdot  f^{\kund^0}_{\lund^0}\circ {\alpha}^{\kund^1}_{\lund^1}  +  \sum_{  c(\kund^0, \lund^0 )=1} (-1)^{\rho_1} \cdot  {\beta}^{\kund^0}_{\lund^0}\circ f^{\kund^1}_{\lund^1}.
\e
\end{remark}

These induce \Ainf -morphisms:
\begin{prop}
\label{prop:Ainf_mph_from_f_mph} Consider $f\colon A \to B$ as in Definition~\ref{def:mph_f_bialg} above. Then the family $\varphi^k = f^k_1$ defines an \Ainf -morphism from $A$ to $B$ (in the sign convention below), endowed with the \Ainf -algebra structures $m_A^k$, $m_B^k$ of Proposition~\ref{prop:rel_f_bialg_A_inf}, namely they satisfy:
\ea\label{eq:Ainf_alg_mph_rel}
0 &= \sum_{\begin{subarray}{c} k+1 = k^0+ k^1,\\ 1\leq i \leq k^0 \end{subarray}} (-1)^{k^1 k^0 + (k^1+1)i +1} \cdot \varphi^{k^0} \circ \left( id^{\otimes (i-1)} \otimes m_A^{k^1} \otimes id^{\otimes (k^0 -i)}   \right) \\
&+ \sum_{\begin{subarray}{c} \abs{\kund^1} = k,\\ n(\kund^1)= k^0 \end{subarray}} (-1)^{1+\sum_{i=1}^{k^0} (k^1_i +1 )(k^0 +i) } \cdot  m_B^{k^0} \circ \left( \varphi^{k^1_1} \otimes \cdots \otimes \varphi^{k^1_{k^0}}   \right). \nonumber
\ea
Likewise, $\psi^l = f^1_l$ defines an \Ainf -morphism from $A$ to $B$, endowed with the \Ainf -coalgebra structures $\delta_A^l$, $\delta_B^l$ of Proposition~\ref{prop:rel_f_bialg_A_inf}, namely they satisfy:
\ea\label{eq:Ainf_coalg_mph_rel}
0 &=  \sum_{\begin{subarray}{c} l+1 = l^0+ l^1,\\ 1\leq j \leq l^1 \end{subarray}} (-1)^{l^0  j+ l^0 +j  } \cdot \left( id^{\otimes (j-1)} \otimes \delta_B^{l^0} \otimes id^{\otimes (l^1 - j)}   \right) \circ \psi^{l^1} \\
&+\sum_{\begin{subarray}{c} \abs{\lund^0} = l,\\ n(\lund^0)= l^1 \end{subarray}} (-1)^{\sum_{j=1}^{l^1} j(l^0_j +1) } \cdot  \left( \psi^{l^0_1} \otimes \cdots \otimes \psi^{l^0_{l^1}}  \right) \circ  \delta_A^{l^1} .\nonumber
\ea
\end{prop}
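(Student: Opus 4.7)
The plan is to specialize the morphism coherence relation $(M^{\kund}_{\lund})$ of Definition~\ref{def:mph_f_bialg} to $\kund = (k)$, $\lund = (1)$ for the algebra statement (\ref{eq:Ainf_alg_mph_rel}), and symmetrically to $\kund = (1)$, $\lund = (l)$ for the coalgebra statement (\ref{eq:Ainf_coalg_mph_rel}). In both cases one side of the biforest is $(1)$, so every splitting on that side must be trivial, and the sum collapses to one governed by the constraints $c(\kund^i, \lund^i) = 1$ that ensure the corresponding $\alpha$, $\beta$ or $f$ operations are nonzero.

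For the algebra statement, I would enumerate the splittings of $\kund = (k)$ with $\lund^0 = \lund^1 = (1)$. The term $f^{\kund^0}_{\lund^0} \circ \alpha^{\kund^1}_{\lund^1}$ requires $c(\kund^1, \lund^1) = 1$; since $\lund^1$ is vertical this forces $\kund^1 = (1, \ldots, 1, k^1, 1, \ldots, 1)$ with $k^1 \geq 2$ at some position $i$, together with $\kund^0 = (k^0)$, $k^0 + k^1 = k+1$ and $1 \leq i \leq k^0$. Relation $(V^{\kund^1}_{1})$ then yields $\alpha^{\kund^1}_{(1)} = id^{\otimes(i-1)} \otimes m_A^{k^1} \otimes id^{\otimes(k^0 - i)}$, while $f^{\kund^0}_{(1)} = \varphi^{k^0}$. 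Symmetrically, $\beta^{\kund^0}_{\lund^0} \circ f^{\kund^1}_{\lund^1}$ requires $c(\kund^0, \lund^0) = 1$, which forces $\kund^0 = (k^0)$ with $k^0 \geq 2$ and $\kund^1 = (k^1_1, \ldots, k^1_{k^0})$ with $\sum_h k^1_h = k$; then $\beta^{\kund^0}_{(1)} = m_B^{k^0}$, and $(W^{\kund^1}_{1})$ --- whose identification is the sign-free one of (\ref{eq:iso_split_a}) --- gives $f^{\kund^1}_{(1)} = \varphi^{k^1_1} \otimes \cdots \otimes \varphi^{k^1_{k^0}}$. These two families of contributions reproduce exactly the two sums of (\ref{eq:Ainf_alg_mph_rel}).

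It remains to check that the signs from Proposition~\ref{prop:or_glueing_maps} and the $\heartsuit$-formula (\ref{eq:heartsuit}) agree with those prescribed. For the first summand $y^0 = y^1 = 0$, $\heartsuit_l = 0$, $\heartsuit_k = (k^1 - 1)(k^0 - i)$ and $x^0 = k^0 - 1$, so $\rho_0 = (k^1 - 1)(k^0 - i) + (k^0 - 1) \equiv k^1 k^0 + (k^1 + 1)i + 1 \pmod{2}$. For the second summand $\heartsuit_k = \sum_{h=1}^{k^0}(k^1_h - 1)(k^0 - h)$ and $\rho_1 = \heartsuit_k + 1 \equiv 1 + \sum_{h=1}^{k^0}(k^1_h + 1)(k^0 + h) \pmod{2}$, the two expressions differing by $2\sum_h h\, k^1_h + 2(k^0)^2$. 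The coalgebra statement is obtained by the same argument specialized to $\kund = (1)$, $\lund = (l)$ and applying $(V^{1}_{\lund^0})$ and $(W^{1}_{\lund^0})$; because $a = 1$ throughout these specializations, the permutation maps $\tau^a_b$ of (\ref{eq:iso_split_b}) act as the identity and contribute no extra signs, leaving only the $\heartsuit$-arithmetic. The main obstacle is therefore purely computational --- correctly unpacking $\heartsuit^{\kund^1}_{\kund^0}$ and verifying parity agreement --- and no new geometric input is required.
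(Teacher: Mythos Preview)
Your proposal is correct and follows essentially the same approach as the paper: specialize $(M^{\kund}_{\lund})$ to $\kund=(k)$, $\lund=(1)$, use the constraint $c(\kund^i,\lund^i)=1$ to pin down the shape of $\kund^0,\kund^1$, apply $(V^{\kund^1}_1)$ and $(W^{\kund^1}_1)$, and then check the $\rho_0,\rho_1$ signs via the $\heartsuit$-formula. Your added remark that $\tau^a_b$ is the identity when $a=1$ in the coalgebra case is a nice clarification the paper leaves implicit.
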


\begin{proof} From the axioms, ${\alpha}^{\kund^1}_{\lund^1}  = 0$ unless $c(\kund^1, \lund^1 )\leq 1$. Here, $\lund = (1)$, which implies that $\kund^1$ is of the form $\kund^1 = (1, \ldots , 1, k^1, 1, \ldots , 1)$, with $k^1$ at position $i$, $1\leq i \leq k^0 $. One then has:
\ea
v(\lund^0) &= v(\lund^1) = 0, \\
v(\kund^0) &= k^0 -1, \
v(\kund^1) = k^1 -1 , \\
\heartsuit^{\lund^0}_{\lund^1} &= 0,\
\heartsuit^{\kund^1}_{\kund^0} = (k^1 -1)(k^0-i).
\ea
From Proposition~\ref{prop:or_glueing_maps}, it follows that $\rho_0=  k^0 k^1 + i(k^1 +1) +1 \text{ mod }2$.
Finally, $(V^{\kund^1}_1)$ gives 
\e
\alpha^{\kund^1}_1 =  \left( id^{\otimes (i-1)} \otimes m_A^{k^1} \otimes id^{\otimes (k^0 -i)}  \right).
\e

Consider now the second sum. As for the first one, $\lund^0 = \lund^1 = (1)$, $\kund^0= (k^0)$, but now there are no constraints on $\kund^1$, other than $\abs{\kund^1} = k$ and $n(\kund^1)= k^0 $. One has:
\ea
v(\lund^0) &= v(\lund^1) = 0, \\
v(\kund^0) &= k^0 -1, \
v(\kund^1) = \sum_{i=1}^{k^0} (k^1_i -1) , \\
\heartsuit^{\lund^0}_{\lund^1} &= 0,\
\heartsuit^{\kund^1}_{\kund^0} = \sum_{i=1}^{k^0} (k^1_i -1)(k^0-i),
\ea
and therefore, from Proposition~\ref{prop:or_glueing_maps}:
\e
\rho_1 = 1+\sum_{i=1}^{k^0} (k^1_i +1 )(k^0 +i)  .
\e
Using  $(W^{\kund^1}_{1})$, we get
\e
f^{\kund^1}_{1} = \varphi^{k^1_1} \otimes \cdots \otimes \varphi^{k^1_{k^0}},
\e
which proves (\ref{eq:Ainf_alg_mph_rel}). The proof of (\ref{eq:Ainf_coalg_mph_rel}) is similar.
\end{proof}

\begin{remark}(Composition) 
Morphisms of $f$-bialgebras can be composed: if $f\colon A \to B$ an $g\colon B\to C$, then $g\circ f$ is defined by
\e
(g\circ f)^{\kund}_{\lund} = \sum_{\begin{subarray}{c}\kund = \kund^1 \sharp \kund^0  \\ \lund = \lund^0 \sharp \lund^1 \end{subarray}} \pm g^{\kund^0}_{\lund^0} \circ f^{\kund^1}_{\lund^1},
\e
with the sign given by the orientation of the boundary strata of $J(2)^{\kund}_{\lund}$. With this operation, $f$-bialgebras form a category. However, mapping a Lie group to its Morse complex does not define a strict functor: $(g\circ f)_* = g_* \circ f_*$ only holds  up to a homotopy, measured by $J(2)$. 
In \cite{morse_fun}, the first author will show that $\fBialg$ can be enhanced to an $(\infty,1)$-category, using the higher multiplihedron $J(n)$. With this structure, the above assignment indeed defines an $\infty$-functor.
\end{remark}

\subsection{Bimodules of $f$-bialgebras}
\label{ssec:bimod_Ainf_bialg}
Recall that for two \Ainf -algebras $(A, B)$,  an \Ainf -bimodule $M$ consists in a family of operations 
\e
\mu^{k^l|1|k^r}\colon (A)^{k^l}\otimes M\otimes (B)^{k^r} \to M,
\e
satisfying the \Ainf -relations. For $f$-bialgebras, we will have a family indexed by multi-indices $\mu^{\kund^l|1|\kund^r}_{\lund}$ satisfying relations analogous to (\ref{eq:R_kund_lund}), plus extra relations analogous to (\ref{eq:V_a_lund}), (\ref{eq:V_kund_b}) and (\ref{eq:D_kund_i_lund}),  (\ref{eq:D_kund_lund_i}). In order to express the vertical tree deletion (\ref{eq:D_kund_i_lund}) when $i$ corresponds to the $M$ input, one needs to introduce extra maps with no $M$ inputs, that can not always be expressed from the structure operations of the given pair of bialgebras. We will denote these operations $\mu^{\kund^l|0|\kund^r}_{\lund}$.

\begin{figure}[!h]
    \centering
    \def\svgwidth{.50\textwidth}
\begingroup%
  \makeatletter%
  \providecommand\color[2][]{%
    \errmessage{(Inkscape) Color is used for the text in Inkscape, but the package 'color.sty' is not loaded}%
    \renewcommand\color[2][]{}%
  }%
  \providecommand\transparent[1]{%
    \errmessage{(Inkscape) Transparency is used (non-zero) for the text in Inkscape, but the package 'transparent.sty' is not loaded}%
    \renewcommand\transparent[1]{}%
  }%
  \providecommand\rotatebox[2]{#2}%
  \newcommand*\fsize{\dimexpr\f@size pt\relax}%
  \newcommand*\lineheight[1]{\fontsize{\fsize}{#1\fsize}\selectfont}%
  \ifx\svgwidth\undefined%
    \setlength{\unitlength}{127.55905512bp}%
    \ifx\svgscale\undefined%
      \relax%
    \else%
      \setlength{\unitlength}{\unitlength * \real{\svgscale}}%
    \fi%
  \else%
    \setlength{\unitlength}{\svgwidth}%
  \fi%
  \global\let\svgwidth\undefined%
  \global\let\svgscale\undefined%
  \makeatother%
  \begin{picture}(1,0.62222222)%
    \lineheight{1}%
    \setlength\tabcolsep{0pt}%
    \put(0,0){\includegraphics[width=\unitlength,page=1]{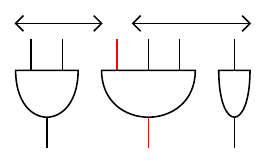}}%
    \put(0.19425114,0.56723111){\color[rgb]{0,0,0}\makebox(0,0)[lt]{\lineheight{1.25}\smash{\begin{tabular}[t]{l}$\kund^l$\end{tabular}}}}%
    \put(0.67954159,0.57208724){\color[rgb]{0,0,0}\makebox(0,0)[lt]{\lineheight{1.25}\smash{\begin{tabular}[t]{l}$\kund^r$\end{tabular}}}}%
    \put(0.43738976,0.56499181){\color[rgb]{0,0,0}\makebox(0,0)[lt]{\lineheight{1.25}\smash{\begin{tabular}[t]{l}$i$\end{tabular}}}}%
  \end{picture}%
\endgroup%

      \caption{The multi-index $(\kund^l|1|\kund^r) = (2,0 |1|2,1)$.}
      \label{fig:bimod_multind}
\end{figure}

We need a little notational preparation for splitting multi-indices. If $\kund$ is a multi-index, and $1\leq i \leq \abs{\kund}$, one can form two multi-indices 
\ea
\kund^l &= (k_1^l, \ldots, k_{a^l}^l, k_{a^l +1}^l ) \\
\kund^r &= (k_0^r,k_1^r, \ldots, k_{a^r}^r )
\ea
as in Figure~\ref{fig:bimod_multind}. The integers $a^l$ and $a^r$ are allowed to be zero, and the entries of $\kund^l$ and $\kund^l$ are $\geq 1$, except $k_{a^l+1}^l$ and $k_0^r$, which are allowed to be zero. The initial multi-index and the splitting positions can be recovered as
\ea
\kund &= (k_1^l, \ldots, k_{a^l+1}^l +1+ k_0^r, \ldots, k_{a^r}^r ),\\
i &= \abs{\kund^l}+1 .
\ea

Write $(\kund^l |1|\kund^r) = (\kund^l_1 |1|\kund^r_1) \sharp (\kund^l_0 |1|\kund^r_0)$ if the associated total multi-indices satisfy $\kund = \kund_1 \sharp \kund_0$, and the splitting positions $i_0, i_1$ are as follows (see Figure~\ref{fig:split_bimod_multind}): we first split $\kund_1$ at position $i_1 = \abs{\kund^l}+1$ to get $\kund_1^l |1|\kund_1^r$, then split $\kund_0$ at position $i_0 = n(\kund_1^l)$ to get $\kund_0^l |1|\kund_0^r$.

We will write $(\kund^l |0|\kund^r)$ if $\kund^l, \kund^r$ are multi-indices with only entries $\geq 1$, and  associate the total multi-index
\e
\kund = (k_1^l, \ldots, k_{a^l}^l , k_1^r, \ldots, k_{a^r}^r ).
\e
We will write $(\kund^l |0|\kund^r) = (\kund^l_1 |0|\kund^r_1) \sharp (\kund^l_0 |0|\kund^r_0)$ if  $\kund^l = \kund^l_1 \sharp \kund^l_0$ and  $\kund^r =\kund^r_1 \sharp \kund^r_0$.

\begin{figure}[!h]
    \centering
    \def\svgwidth{.50\textwidth}
\begingroup%
  \makeatletter%
  \providecommand\color[2][]{%
    \errmessage{(Inkscape) Color is used for the text in Inkscape, but the package 'color.sty' is not loaded}%
    \renewcommand\color[2][]{}%
  }%
  \providecommand\transparent[1]{%
    \errmessage{(Inkscape) Transparency is used (non-zero) for the text in Inkscape, but the package 'transparent.sty' is not loaded}%
    \renewcommand\transparent[1]{}%
  }%
  \providecommand\rotatebox[2]{#2}%
  \newcommand*\fsize{\dimexpr\f@size pt\relax}%
  \newcommand*\lineheight[1]{\fontsize{\fsize}{#1\fsize}\selectfont}%
  \ifx\svgwidth\undefined%
    \setlength{\unitlength}{229.60629921bp}%
    \ifx\svgscale\undefined%
      \relax%
    \else%
      \setlength{\unitlength}{\unitlength * \real{\svgscale}}%
    \fi%
  \else%
    \setlength{\unitlength}{\svgwidth}%
  \fi%
  \global\let\svgwidth\undefined%
  \global\let\svgscale\undefined%
  \makeatother%
  \begin{picture}(1,0.51851852)%
    \lineheight{1}%
    \setlength\tabcolsep{0pt}%
    \put(0,0){\includegraphics[width=\unitlength,page=1]{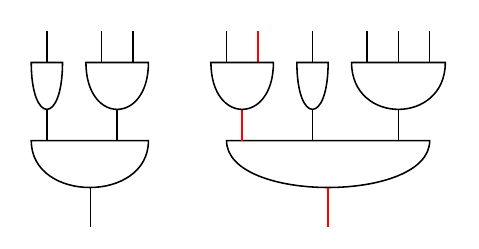}}%
    \put(0.52096208,0.4692016){\color[rgb]{0,0,0}\makebox(0,0)[lt]{\lineheight{1.25}\smash{\begin{tabular}[t]{l}$i_1$\end{tabular}}}}%
    \put(0.51646768,0.24966675){\color[rgb]{0,0,0}\makebox(0,0)[lt]{\lineheight{1.25}\smash{\begin{tabular}[t]{l}$i_0$\end{tabular}}}}%
  \end{picture}%
\endgroup%

      \caption{Splitting a bimodule multi-index.}
      \label{fig:split_bimod_multind}
\end{figure}

Recall that the prefix $u$- stands for up, and reflects the fact that we are splitting the multi-index $\kund$ corresponding to the ascending forest.

\begin{defi}\label{def:kund_bimod} Let  $(A, \left\lbrace {\alpha}^{\kund}_{\lund} \right\rbrace )$ and  $(B,  \left\lbrace {\beta}^{\kund}_{\lund} \right\rbrace )$  be two $f$-bialgebras. An \emph{$(A,B)$  $u$-bimodule (of $f$-bialgebras)} is a chain complex $(M, \partial_M)$, with a collection of operations, with $\epsilon = 0, 1$:
\e
\mu^{\kund^l |\epsilon|\kund^r}_{\lund}\colon (A^b)^{\abs{\kund^l}} \otimes M^{\epsilon b} \otimes (B^b)^{\abs{\kund^r}} \to (A^{\abs{\lund}})^{a^l } \otimes M^{\epsilon \abs{\lund}} \otimes (B^{\abs{\lund}})^{a^r } 
\e
of degree $\deg \mu^{\kund^l |\epsilon|\kund^r}_{\lund} = v(\kund) + v(\lund) -1$, with $\kund$ the total multi-index corresponding to $(\kund^l |\epsilon|\kund^r)$, such that:

\begin{itemize}
\item  they satisfy the family of relations:
\e\label{eq:R_kund_l_eps_r_lund}\tag{$R^{\kund^l |\epsilon|\kund^r}_{\lund}$}
0= \sum_{ } (-1)^{\rho } \cdot \mu^{\kund^l_0 |\epsilon|\kund^r_0}_{\lund_0}\circ \mu^{\kund^l_1 |\epsilon|\kund^r_1}_{\lund_1},
\e
where the sum runs over $(\kund^l |\epsilon|\kund^r,\lund) = (\kund^l_1 |\epsilon|\kund^r_1) \sharp (\kund^l_0 |\epsilon|\kund^r_0)$ and $\lund =\lund_0 \sharp \lund_1$.

\item If $\kund = \One_a$ and $\lund = \One_b$, then $\mu^{\kund^l |\epsilon|\kund^r}_{\lund}$ is the tensor differential  induced by $\alpha^1_1$, $\partial_M =\mu^{0 |1|0}_{1}$ and $\beta^1_1$. We will refer to this condition as $\left(T^{a^l | \epsilon |a^r}_b \right)$.

\item If $\kund^l |\epsilon| \kund^r$ is such that $\kund = \One_a$ and $\lund = (1, \ldots 1, l_j , 1, \ldots, 1)$, with $l_j\geq 2$, then
\e\label{eq:V_al_eps_ar_lund}\tag{$V^{a^l |\epsilon| a^r}_{\lund}$}
{\mu}^{\kund^l |\epsilon| \kund^r}_{\lund} \simeq 
id \otimes{\mu}^{\kund^l |\epsilon| \kund^r}_{l_j} \otimes id,
\e
where $\simeq$ is in the sense of (\ref{eq:iso_split_b}), and $id$ refer to the identities of the appropriate tensor products of $A,M$ and $B$.

\item If  $\lund = \One_b$, then
\e\label{eq:V_kundl_eps_kundr_b}\tag{$V^{\kund^l |\epsilon| \kund^r}_{b}$}
{\mu}^{\kund^l |\epsilon| \kund^r}_{\lund} \simeq 
\begin{cases}
id \otimes {\alpha}^{k_i^l}_{\lund} \otimes id, &\text{ if }\kund^l = (1, \ldots , 1, k_i^l , 1, \ldots , 1);\  \kund^r = \One , \\
id \otimes {\mu}^{k_{a^l +1}^l|1|k_0^r}_{\lund} \otimes id, &\text{ if }(\kund^l |\epsilon| \kund^r) = ( 1, \ldots , 1, k_{a^l +1}^l |1| k_0^r, 1, \ldots , 1), \\
id \otimes {\beta}^{k_i^r}_{\lund} \otimes id, & \text{ if }\kund^l = \One;\  \kund^r = (1, \ldots , 1, k_i^r , 1, \ldots , 1),
\end{cases}
\e
where in each case, $k_i^l$, $ k_{a^l +1}^l + 1+ k_0^r$ and $k_i^r $ respectively are $\geq 2$; $\simeq$ is in the sense of (\ref{eq:iso_split_a}); and $id$ refer to the identities of the appropriate tensor products of $A,M$ and $B$.

\item If $(\kund^l |\epsilon| \kund^r)$ and $\lund$ are such that $c(\kund, \lund) \geq 2$, then ${\mu}^{\kund^l |\epsilon| \kund^r}_{\lund}=0$. We refer to this condition either as $(V^{a^l |\epsilon| a^r}_{\lund})$ or $(V^{\kund^l |\epsilon| \kund^r}_{b})$.

\item When one side is empty,
\e\label{eq:E_kund_l_0_r_lund}\tag{$E^{\kund^l |0|\kund^r}_{\lund}$}
\mu^{\kund^l |0|\kund^r}_{\lund}  = \begin{cases}
\alpha^{\kund^l}_{\lund}\text{ if }\kund^r = (0), \\
\beta^{\kund^r}_{\lund}\text{ if }\kund^l = (0).
\end{cases}
\e

\item (vertical ascending tree deletion) Suppose that $\lund$ is an almost vertical forest, i.e. only with 1 and 2. Assume that $\kund^l |\epsilon|\kund^r$ has a vertical tree at position $i$, depending on what $i$ hits:
\e\label{eq:D_kundl_eps_kundr_i_lund}\tag{$D^{\kund^l |\epsilon|\kund^r,i}_{\lund}$}
\mu^{\kund^l |\epsilon|\kund^r}_{\lund} \simeq \begin{cases}
\mu^{\widehat{\kund}^l |\epsilon|\kund^r}_{\lund} \otimes  \widetilde{\alpha}^{1}_{\lund}\text{ if $i$ hits ${\kund}^l$,}\\
\mu^{\kund^l |\epsilon|\widehat{\kund}^r}_{\lund} \otimes  \widetilde{\beta}^{1}_{\lund}\text{ if $i$ hits ${\kund}^r$,} \\
\mu^{\widehat{\kund}^l |0|\widehat{\kund}^r}_{\lund} \otimes \widetilde{\mu}^{0 |1|0}_{\lund}\text{ if $i$ hits $\epsilon = 1$,}
\end{cases}
\e
where $\widetilde{\alpha}^{1}_{\lund}$ and $\widetilde{\beta}^{1}_{\lund}$ are as in Definition~\ref{def:f_bialg}, and $\widetilde{\mu}^{0 |1|0}_{\lund} = \widetilde{\mu}^{0 |1|0}_{l_1} \otimes \cdots \otimes \widetilde{\mu}^{0 |1|0}_{l_b}$ is defined analogously, with $\widetilde{\mu}^{0 |1|0}_{1}=id_M$ and $\widetilde{\mu}^{0 |1|0}_{2}={\mu}^{0 |1|0}_{2}$. 
In the third case the last (resp. first) entries of $\kund^l$ and $\kund^r$  are equal to 0, and $\widehat{\kund}^l$, $\widehat{\kund}^r$ corresponds to removing them.

\item (vertical descending tree deletion) Assume now that $\kund^l |\epsilon|\kund^r$ is an almost vertical forest, meaning that 
\e
\kund =\begin{cases} (k_1^l, \ldots, k_{a^l+1}^l +1+ k_0^r, \ldots, k_{a^r}^r ) \text{ if }\epsilon = 1,\\ (k_1^l, \ldots, k_{a^l}^l, k_1^r, \ldots, k_{a^r}^r ) \text{ if }\epsilon = 0,
\end{cases}
\e 
is almost vertical. Assume also that $\lund$ has a vertical tree at position $i$, and let $\widehat{\lund}$ with this vertical tree removed. Then:
\e\label{eq:D_kundl_eps_kundr_lund_i}\tag{$D^{\kund^l |\epsilon|\kund^r}_{\lund,i}$}
\mu^{\kund^l |\epsilon|\kund^r}_{\lund} \simeq \mu^{\kund^l |\epsilon|\kund^r}_{\widehat{\lund}}  \otimes  \mu^{\kund^l |\epsilon|\kund^r}_{1}.
\e

\end{itemize}
\end{defi}

Dually, one can define ``$d$-bimodules'' as a family of operations $\mu^{\kund}_{\lund^l |\nu|\lund^r}$ satisfying analogous relations. More generally, one can make sense of ``quadrimodules'':
\begin{defi}\label{def:quadrimod}
Let:
\begin{itemize}
\item four $f$-bialgebras $A,B,C,D$,
\item two $u$-bimodules: an $(A,B)$-bimodule $M$ and a $(C,D)$-bimodule $N$,
\item two $d$-bimodules: an $(A,C)$-bimodule $P$ and a $(B,D)$-bimodule $Q$,
\end{itemize}  
We write all these data in a box
\e
\square := \begin{pmatrix}
A & M & B \\ 
P & . & Q \\ 
C & N & D
\end{pmatrix} ,
\e
and say that $R$ is a \emph{$\square$-quadrimodule} if it is a chain complex with a family of operations, with $\epsilon, \nu = 0,1$: 
\ea
\mu^{\kund^l |\epsilon|\kund^r}_{\lund^l |\nu|\lund^r} &\colon \left( A^{b^l} {P}^{\nu} C^{b^r} \right)^{\abs{\kund^l}} \otimes \left( M^{b^l} R^{\nu} N^{b^r} \right)^{\epsilon} \otimes \left(B^{b^l} {Q}^{\nu} D^{b^r} \right)^{\abs{\kund^r}} \\
 &\to \left( A^{\abs{\lund^l}} {P}^{\nu} C^{\abs{\lund^r}} \right)^{a^l} \otimes\left(  M^{\abs{\lund^l}} R^{\nu} N^{\abs{\lund^r}} \right)^{\epsilon} \otimes \left(  B^{\abs{\lund^l}} {Q}^{\nu} D^{\abs{\lund^r}}\right)^{a^r} \nonumber
\ea
Such that:
\begin{itemize}
\item When ${\kund^l |\epsilon|\kund^r} = {\kund^l |0|0}$ (resp. ${\kund^l |\epsilon|\kund^r} = { 0|0|\kund^r}$), $\mu^{\kund^l |\epsilon|\kund^r}_{\lund^l |\nu|\lund^r}$ equals to the corresponding structure operation of $P$ (resp. $Q$),
\item When ${\lund^l |\nu|\lund^r} = {\lund^l |0|0}$ (resp. ${\lund^l |\nu|\lund^r} = { 0|0|\lund^r}$), $\mu^{\kund^l |\epsilon|\kund^r}_{\lund^l |\nu|\lund^r}$ equals to the corresponding structure operation of $M$ (resp. $N$),
\item they satisfy  coherence and simplification relations
\e\label{eq:quadrimod}
 (T^{a^l |\epsilon|a^r}_{b^l |\nu|b^r}),\  (R^{\kund^l |\epsilon|\kund^r}_{\lund^l |\nu|\lund^r}),\ (V^{\kund^l |\epsilon|\kund^r}_{b^l|\nu|b^r}),\ (V^{a^l |\epsilon|a^r}_{\lund^l |\nu|\lund^r}),\ (D^{\kund^l |\epsilon|\kund^r,i}_{\lund^l |\nu|\lund^r}),\ (D^{\kund^l |\epsilon|\kund^r}_{\lund^l |\nu|\lund^r,i})
\e
analogous to the ones for bimodules.
\end{itemize}
\end{defi}

\subsection{Morphisms of bimodules of $f$-bialgebras}
\label{ssec:mph_bimod_Ainf_bialg}
The following is the notion of morphisms appearing in Conjecture~\ref{conj:Morse}:

\begin{defi}\label{def:mph_bimod} Let:

\noindent\begin{minipage}{.2\textwidth}\begin{tikzcd}
   A \ar{r}{f}& C \\
   M \ar{r}{\Phi} \ar[,loop ,out=123,in=57,distance=2.5em]{}{} \ar[,loop ,out=-123,in=-57,distance=2.5em]{}{}& N \ar[,loop ,out=123,in=57,distance=2.5em]{}{} \ar[,loop ,out=-123,in=-57,distance=2.5em]{}{}\\
   B \ar{r}{g}& D
\end{tikzcd}
\end{minipage}
\begin{minipage}{.7\textwidth}
\begin{itemize}
\item  $A,B,C,D$ be four $f$-bialgebras,
\item $(M, \mu)$ be an $(A,B)$-bimodule, 
\item $(N, \nu)$ be a $(C,D)$-bimodule, 
\end{itemize}
\end{minipage}

A \emph{morphism of bimodules} $(f,\Phi,g) \colon M\to N$  consists in 
two morphisms of $f$-bialgebras, $f\colon A \to C$, $g\colon B \to D$, with a family of operations
\e
\Phi^{\kund^l |\epsilon|\kund^r}_{\lund}\colon (A^b)^{\abs{\kund^l}} \otimes (M^b)^\epsilon \otimes (B^b)^{\abs{\kund^r}} \to (C^{\abs{\lund}})^{a^l } \otimes (N^{\abs{\lund}})^\epsilon \otimes (D^{\abs{\lund}})^{a^r } 
\e
Satisfying:
\e\tag{$ M^{\kund^l |\epsilon|\kund^r}_{\lund}$ }
0 = \sum_{}{ (-1)^{\rho_0} \cdot \Phi^{\kund^l_0 |\epsilon|\kund^r_0}_{\lund_0}  \circ  \mu^{\kund^l_1 |\epsilon|\kund^r_1}_{\lund_1}   + (-1)^{\rho_1} \cdot  \nu^{\kund^l_0 |\epsilon|\kund^r_0}_{\lund_0}\circ \Phi^{\kund^l_1 |\epsilon|\kund^r_1}_{\lund_1}   },
\e
where the sum runs over $(\kund^l |\epsilon|\kund^r) = (\kund^l_1 |\epsilon|\kund^r_1) \sharp (\kund^l_0 |\epsilon|\kund^r_0)$ and $ \lund = \lund^0 \sharp \lund^1$. And also:
\begin{itemize}
\item If $\lund$ is vertical,
\ea\tag{$ W^{\kund^l |0|\kund^r}_{b}$ }
\Phi^{\kund^l |0|\kund^r}_{\lund} &\simeq f^{k^l_1}_{\lund}   \otimes \cdots \otimes  f^{k^l_{a^l }}_{\lund} \otimes  g^{k^r_1}_{\lund}  \otimes \cdots \otimes  g^{k^r_{a^r}}_{\lund}, \\
\tag{$ W^{\kund^l |1|\kund^r}_{b}$ }
\Phi^{\kund^l |1|\kund^r}_{\lund} &\simeq f^{k^l_1}_{\lund}   \otimes \cdots \otimes  f^{k^l_{a^l }}_{\lund} \otimes \Phi^{k^l_{a^l +1} |1|k^r_0}_{\lund}\otimes g^{k^r_1}_{\lund}  \otimes \cdots \otimes  g^{k^r_{a^r}}_{\lund} .
\ea

\item If $\kund^l |\epsilon|\kund^r$ is vertical, 
\e\tag{$ W^{a^l |\epsilon|a^r}_{\lund}$ }
\Phi^{\kund^l |\epsilon|\kund^r}_{\lund} \simeq \Phi^{\kund^l |\epsilon|\kund^r}_{l_1}  \otimes \cdots \otimes \Phi^{\kund^l |\epsilon|\kund^r}_{l_b} .
\e

\end{itemize}

\end{defi}

\subsection{Bimodule categories and functors}
\label{ssec:bimod_categ_fun}

Here we define the categorical notions appearing in Conjecture~\ref{conj:Floer}. It accounts for how an $f$-bialgebra (say the Morse complex $CM(G)$) acts on an \Ainf -(co)category (the $G$-invariant part of the Fukaya category $\Fuk(M)$). Loosely speaking, we are adding objects to a $u$-bimodule. The action is at the level of morphisms only. The more general notion, where the action is also on objects, will be introduced in Section~\ref{ssec:lund_categ_bimod_fun}.

First, some notational preliminaries. We will need to decorate multi-indices by objects. Recall from Section~\ref{ssec:geom_realiz} that to a biforest $(U,D)$ in $K^{\kund}_{\lund}$ we associate a hybrid graph 
\e
H(U,D) = \abs{U}^{\mathrm{Morse}} \times_{\rr}  \langle D \rangle \cup \abs{U}^{\mathrm{Floer}}\times_{\rr} \langle\!\langle D \rangle\!\rangle.
\e 
To form moduli spaces from these hybrid graphs, we need to prescribe some Lagrangian boundary conditions on the Riemann surface part $\abs{U}^{\mathrm{Floer}}\times_{\rr} \langle\!\langle D \rangle\!\rangle$. This is what our decorated multi-indices are doing. 

\begin{defi}\label{def:deco_multind} Let $\mathrm{Ob}$ be a set. We say $\Lund$ is a \emph{multi-index decorated by} $\mathrm{Ob}$ if it is  a multi-index $\lund = (l_1, \ldots, l_b)$, together with a family of $\abs{\lund} + n(\lund)$ elements of $\mathrm{Ob}$:
\e
L_{1,0}, \ldots ,L_{1,l_1} , L_{2, 0}, \ldots ,L_{2,l_2} , L_{3,0}, \ldots ,L_{b,l_b}.
\e
These correspond to the boundary components of the surface in Figure~\ref{fig:dec_lund}. 
\end{defi}

\begin{figure}[!h]
    \centering
    \def\svgwidth{.70\textwidth}
\begingroup%
  \makeatletter%
  \providecommand\color[2][]{%
    \errmessage{(Inkscape) Color is used for the text in Inkscape, but the package 'color.sty' is not loaded}%
    \renewcommand\color[2][]{}%
  }%
  \providecommand\transparent[1]{%
    \errmessage{(Inkscape) Transparency is used (non-zero) for the text in Inkscape, but the package 'transparent.sty' is not loaded}%
    \renewcommand\transparent[1]{}%
  }%
  \providecommand\rotatebox[2]{#2}%
  \newcommand*\fsize{\dimexpr\f@size pt\relax}%
  \newcommand*\lineheight[1]{\fontsize{\fsize}{#1\fsize}\selectfont}%
  \ifx\svgwidth\undefined%
    \setlength{\unitlength}{368.50393701bp}%
    \ifx\svgscale\undefined%
      \relax%
    \else%
      \setlength{\unitlength}{\unitlength * \real{\svgscale}}%
    \fi%
  \else%
    \setlength{\unitlength}{\svgwidth}%
  \fi%
  \global\let\svgwidth\undefined%
  \global\let\svgscale\undefined%
  \makeatother%
  \begin{picture}(1,0.53846154)%
    \lineheight{1}%
    \setlength\tabcolsep{0pt}%
    \put(0,0){\includegraphics[width=\unitlength,page=1]{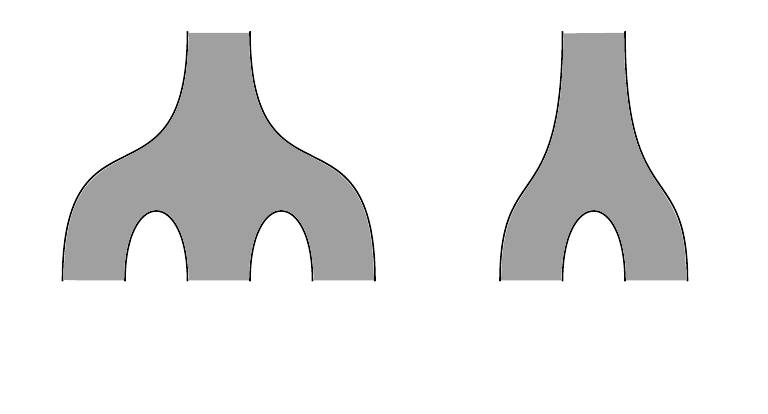}}%
    \put(0.01965682,0.13415693){\color[rgb]{0,0,0}\makebox(0,0)[lt]{\lineheight{1.25}\smash{\begin{tabular}[t]{l}$L_{1,0}$\end{tabular}}}}%
    \put(0.16488011,0.13446548){\color[rgb]{0,0,0}\makebox(0,0)[lt]{\lineheight{1.25}\smash{\begin{tabular}[t]{l}$L_{1,1}$\end{tabular}}}}%
    \put(0.32800456,0.1354807){\color[rgb]{0,0,0}\makebox(0,0)[lt]{\lineheight{1.25}\smash{\begin{tabular}[t]{l}$L_{1,2}$\end{tabular}}}}%
    \put(0.48530263,0.13271003){\color[rgb]{0,0,0}\makebox(0,0)[lt]{\lineheight{1.25}\smash{\begin{tabular}[t]{l}$L_{1,3}$\end{tabular}}}}%
    \put(0.61602678,0.13422649){\color[rgb]{0,0,0}\makebox(0,0)[lt]{\lineheight{1.25}\smash{\begin{tabular}[t]{l}$L_{2,0}$\end{tabular}}}}%
    \put(0.73124188,0.1326283){\color[rgb]{0,0,0}\makebox(0,0)[lt]{\lineheight{1.25}\smash{\begin{tabular}[t]{l}$L_{2,1}$\end{tabular}}}}%
    \put(0.89624618,0.13367875){\color[rgb]{0,0,0}\makebox(0,0)[lt]{\lineheight{1.25}\smash{\begin{tabular}[t]{l}$L_{2,2}$\end{tabular}}}}%
  \end{picture}%
\endgroup%

      \caption{A decoration of $\lund = (3,2)$.}
      \label{fig:dec_lund}
\end{figure}

If $\lund = \lund^0 \sharp \lund^1$, a decoration $\Lund$ of $\lund$ induces decorations $\Lund^0, \Lund^1$ of $\lund^0 \sharp \lund^1$. In this case we will write $\Lund = \Lund^0 \sharp \Lund^1$. This is obvious from Figure~\ref{fig:break_dec}, although writing these explicitly requires a few notational preliminaries, detailed below.

\begin{figure}[!h]
    \centering
    \def\svgwidth{.50\textwidth}
\begingroup%
  \makeatletter%
  \providecommand\color[2][]{%
    \errmessage{(Inkscape) Color is used for the text in Inkscape, but the package 'color.sty' is not loaded}%
    \renewcommand\color[2][]{}%
  }%
  \providecommand\transparent[1]{%
    \errmessage{(Inkscape) Transparency is used (non-zero) for the text in Inkscape, but the package 'transparent.sty' is not loaded}%
    \renewcommand\transparent[1]{}%
  }%
  \providecommand\rotatebox[2]{#2}%
  \newcommand*\fsize{\dimexpr\f@size pt\relax}%
  \newcommand*\lineheight[1]{\fontsize{\fsize}{#1\fsize}\selectfont}%
  \ifx\svgwidth\undefined%
    \setlength{\unitlength}{368.50393701bp}%
    \ifx\svgscale\undefined%
      \relax%
    \else%
      \setlength{\unitlength}{\unitlength * \real{\svgscale}}%
    \fi%
  \else%
    \setlength{\unitlength}{\svgwidth}%
  \fi%
  \global\let\svgwidth\undefined%
  \global\let\svgscale\undefined%
  \makeatother%
  \begin{picture}(1,0.84615385)%
    \lineheight{1}%
    \setlength\tabcolsep{0pt}%
    \put(0,0){\includegraphics[width=\unitlength,page=1]{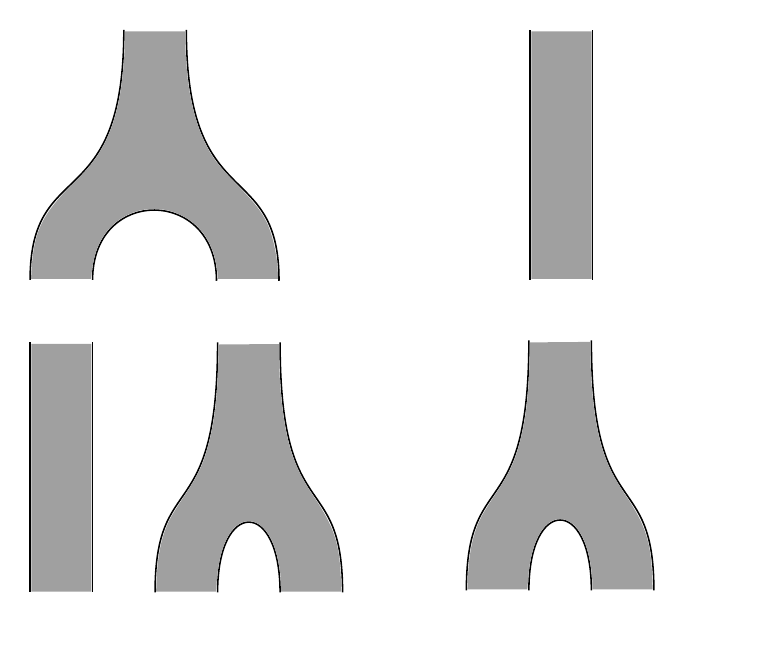}}%
    \put(0.89551282,0.62227564){\color[rgb]{0,0,0}\makebox(0,0)[lt]{\lineheight{1.25}\smash{\begin{tabular}[t]{l}$\Lund^1$\end{tabular}}}}%
    \put(0.89551282,0.2152243){\color[rgb]{0,0,0}\makebox(0,0)[lt]{\lineheight{1.25}\smash{\begin{tabular}[t]{l}$\Lund^0$\end{tabular}}}}%
  \end{picture}%
\endgroup%

      \caption{Splitting a decoration $\Lund$.}
      \label{fig:break_dec}
\end{figure}

For a multi-index $\lund = (l_1, \ldots , l_b)$, and a pair of integers $(j,j')$ such that $1\leq j \leq b$ and  $0\leq j' \leq l_j$, recall that we introduced
\e
s_{\lund}(j,j') = l_1 + \cdots + l_{j-1} + j'.
\e
Notice that here we allow $j'=0$, in which case (if $j\geq 2$) one has $s_{\lund}(j,0) = s_{\lund}(j-1,l_{j-1}) $. 
With these notations, let us define $\Lund^0$ by setting $L^0_{j,j'} = L_{h,h'}$, where:
\begin{itemize}
\item $h$ is such that $j = s_{\lund}(h,\tilde{h})$, for some $\tilde{h}$,
\item let $\lund^{0;h}$ correspond to the subforest of $\lund^0$ growing on top of the $h$-th tree of $\lund^1$, namely:
\e
\lund^{0;h} = (l^0_{l_1 + \cdots + l_{h-1} + 1}, l^0_{l_1 + \cdots + l_{h-1} + 2}, \ldots , l^0_{l_1 + \cdots + l_{h}} ).
\e
\item let $j^h = j- (l_1 + \cdots + l_{h-1})$, so that the $j$-th root of $\lund^0$ corresponds to the $j^h$-th root of $\lund^{0;h}$.
\item let finally $h' = s_{\lund^{0;h}} (j^h, j')$.
\end{itemize}

Finally, $\Lund^1$ is more easily defined by:
\e
L^1_{j,j'} = L_{j, l^0_1 + \cdots + l^0_{j'}}.
\e

We are now ready to define ``$u$-bimodule $d$-categories'', relevant to Conjecture~\ref{conj:Floer}. Here, the prefix $d$- indicates that objects decorate the multi-index $\lund$ of the descending forests.

\begin{defi}Let $A,B$ be two $f$-bialgebras. An $(A,B)$ $u$-bimodule $d$-category $\Mcal$ consists in
\begin{itemize}
\item A set of objects $\mathrm{Ob}(\Mcal)$,
\item For two objects $L_0, L_1$, a chain complex $\Mcal (L_0, L_1)$
\item A family of operations satisfying a family of relations, that we describe below. 
\end{itemize}
Let us first introduce, for $\Lund$ a decoration of $\lund$ by $\mathrm{Ob}(\Mcal)$:
\ea
\Mcal(\Lund)^{in}  &:= \bigotimes_{j=1}^b \Mcal(L_{j,0}, L_{j,l_j}) ,  \\
\Mcal(\Lund)^{out} &:= \bigotimes_{j=1}^b \bigotimes_{j'=1}^{l_j}  \Mcal(L_{j,j'-1}, L_{j,j'}).
\ea
The operations  $\mu^{\kund^l |\epsilon|\kund^r}_{\Lund}$, with $\epsilon=0$ or $1$, are of the form:
\ea
\mu^{\kund^l |\epsilon|\kund^r}_{\Lund} &\colon (A^b)^{\abs{\kund^l}} \otimes \left( \Mcal(\Lund)^{in} \right)^{\epsilon} \otimes (B^b)^{\abs{\kund^r}}\\
&\to   (A^{\abs{\lund}})^{a^l} \otimes \left( \Mcal(\Lund)^{out} \right)^{\epsilon} \otimes (B^{\abs{\lund}})^{a^r} \nonumber
\ea

\begin{figure}[!h]
    \centering
    \def\svgwidth{.50\textwidth}
    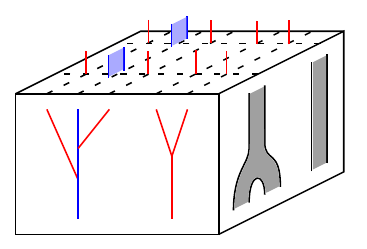
      \caption{The map $\mu^{\kund^l |1|\kund^r}_{\Lund}$ as a rectangular box.}
      \label{fig:rectangular_box_cat}
\end{figure}
For $\epsilon = 1$, one can think of $\mu^{\kund^l |1|\kund^r}_{\Lund}$ as a rectangular box as in Figure~\ref{fig:rectangular_box_cat}. 
For $\epsilon=0$ they only depend on $\Lund$ through $\lund$, so we also denote them $\mu^{\kund^l |0|\kund^r}_{\lund}$. We may also do so when $\epsilon=1$, although this is an abuse of notation (as for \Ainf -categories).
These operations are required to satisfy the obvious analogs 
\[
(R^{\kund^l |\epsilon|\kund^r}_{\Lund}),\ (T^{a^l |\epsilon|a^r}_{\Lund}),\ (V^{a^l |\epsilon|a^r}_{\Lund}),\ (V^{\kund^l |\epsilon|\kund^r}_{b}), \ (E^{\kund^l |0|\kund^r}_{\Lund}),\  (D^{\kund^l |\epsilon|\kund^r,i}_{\Lund}),\ (D^{\kund^l |\epsilon|\kund^r}_{\Lund,i})
\] 
of equations  (\ref{eq:R_kund_l_eps_r_lund}), $\left(T^{a^l | \epsilon |a^r}_b \right)$, ( \ref{eq:V_al_eps_ar_lund}), (\ref{eq:V_kundl_eps_kundr_b}),  (\ref{eq:E_kund_l_0_r_lund}),(\ref{eq:D_kundl_eps_kundr_i_lund}), (\ref{eq:D_kundl_eps_kundr_lund_i}),
%
%
i.e. replacing multi-indices $\lund, \lund^0, \lund^1$ by decorated multi-indices $\Lund, \Lund^0, \Lund^1$.
\end{defi}

Such $u$-bimodule $d$-categories come with a notion of functors, based on morphisms of $u$-bimodules:
\begin{defi}
Let

\noindent\begin{minipage}{.2\textwidth}\begin{tikzcd}
   A \ar{r}{f}& C \\
   \Mcal \ar{r}{\Phi} \ar[,loop ,out=123,in=57,distance=2.5em]{}{} \ar[,loop ,out=-123,in=-57,distance=2.5em]{}{}& \Ncal \ar[,loop ,out=123,in=57,distance=2.5em]{}{} \ar[,loop ,out=-123,in=-57,distance=2.5em]{}{}\\
   B \ar{r}{g}& D
\end{tikzcd}
\end{minipage}
\begin{minipage}{.7\textwidth}
\begin{itemize}
\item  $A,B,C,D$ be four $f$-bialgebras,
\item $f\colon A\to C$ and $g \colon B\to D$ be two morphisms of $f$-bialgebras,
\item $\Mcal$ be  an $(A,B)$  $u$-bimodule $d$-category.
\item  $\Ncal$ be  a $(C,D)$ $u$-bimodule $d$-category.
\end{itemize}
\end{minipage}

Then a \emph{functor} $\Phi\colon \Mcal \to \Ncal$ over $(f,g)$ consists in a map on objects $\Phi_{\mathrm{ob}}\colon \mathrm{Ob} (\Mcal) \to \mathrm{Ob}(\Ncal)$, and a collections of maps:
\ea
\Phi^{\kund^l |\epsilon|\kund^r}_{\Lund} &\colon (A^b)^{\abs{\kund^l}} \otimes \left( \Mcal(\Lund)^{in} \right)^{\epsilon} \otimes (B^b)^{\abs{\kund^r}}\\
&\to   (C^{\abs{\lund}})^{a^l} \otimes \left( \Ncal(\Phi_{\mathrm{ob}}(\Lund))^{out} \right)^{\epsilon} \otimes (D^{\abs{\lund}})^{a^r} \nonumber
\ea
satisfying the obvious analogs of the (M) and (W) 
relations of Definition~\ref{def:mph_bimod}.
\end{defi}

\subsection{$d$-categories, bimodules and functors}
\label{ssec:lund_categ_bimod_fun}
We now turn to the more general setting of Conjecture~\ref{conj:wrapped}, where the $f$-bialgebras become categories. This involves more general decorations. Here, the relevant domain is the foam $F(U, D) = \abs{U}\times_{\rr} \langle\!\langle D \rangle\!\rangle$, and we want to decorate its ``external'' boundary components, i.e. those of $\abs{U}\times_{\rr} \partial \langle\!\langle D \rangle\!\rangle$. The other components $\partial\abs{U}\times_{\rr}  \langle\!\langle D \rangle\!\rangle$ are ``seams'', and will be decorated with the Weinstein correspondences $\Lambda_G(M) \subset T^* G \times M^- \times M$ (resp. $\Lambda_G(T^* G) \subset T^* G \times T^* G^- \times T^* G$) that encodes the Hamiltonian action and moment map (resp. the group structure). 

To ensure ``continuity of the moduli spaces'' at graph transitions, the rule is that at a vertex $v$ of $U$, the incoming decorations determine the outgoing one. If the boundaries corresponding to incoming edges are decorated by $L\subset T^*G$ and $L' \subset M$, the outgoing boundary is decorated by the composition of correspondences $\Lambda_G(M) \circ (L\times L') \subset M$.

\begin{defi}\label{def:dec_kund_lund} Let $\mathrm{Ob}$ be a monoid. A \emph{decoration of} $(\kund,\lund)$ \emph{by} $\mathrm{Ob}$ is a collection of elements $\Lbb = \lbrace L^{i,i'}_{j,j'}\rbrace$ in $\mathrm{Ob}$, with:
\ea
1\leq i \leq a &,\ 1\leq i' \leq k_i ,\label{eq:i_i'} \\
1\leq j \leq b &,\ 0\leq j' \leq l_j .
\ea
Notice that $i'$ starts at 1, whereas $j'$ starts at 0. Equivalently, it is a collection $\Lund^{i,i'}$ of decorations of $\lund$, in the sense of Definition~\ref{def:deco_multind}.
\end{defi}

We will associate to such $\Lbb$ another collection $\Lbb^{\times} = \lbrace L^{i,\times}_{j,j'}\rbrace$ given by multiplying leaves of the ascending trees of $\kund$ together, i.e.
\e
L^{i,\times}_{j,j'} = L^{i,1}_{j,j'} \times \cdots \times L^{i,k_i}_{j,j'}.
\e

If $\kund = \kund^1 \sharp \kund^0$ and $\lund = \lund^0 \sharp \lund^1$, which we denote $(\kund, \lund) = (\kund^1,\lund^1) \sharp (\kund^0,\lund^0)$ a decoration $\Lbb$ of $(\kund, \lund)$ induces decorations $\Lbb^1, \Lbb^0$ of $(\kund^1,\lund^1)$, $(\kund^0,\lund^0)$, as we explain below. We will write $\Lbb = \Lbb^1 \sharp \Lbb^0$. Notice first that pairs $(i,i')$ satisfying (\ref{eq:i_i'}) are in bijection with $\lbrace 1, \ldots , \abs{\kund} \rbrace$ through the map $s_{\kund}$. Since $\abs{\kund}  =\abs{\kund^1} $, by applying $(s_{\kund^1})^{-1}$ one gets a pair $(\tilde{i},\tilde{i}')$ satisfying (\ref{eq:i_i'}) for $\kund^1$. Therefore, the decoration $\Lbb$ can be thought of as a decoration $\widetilde{\Lbb}$ of $(\kund^1,\lund)$. Applying the decoration constructions of Section~\ref{ssec:bimod_categ_fun} to each $\Lund^{\tilde{i},\tilde{i}'}$ individually then yields the decoration $\Lbb^1$ of $(\kund^1,\lund^1)$. 

Consider now $\widetilde{\Lbb}^{\times}$. Since $n(\kund^1) = \abs{\kund^0}$, it can be seen as a decoration of $(\kund^0,\lund)$. Applying again the decoration constructions  to  $\Lund^{\tilde{i},\times}$ then yields the decoration $\Lbb^0$ of $(\kund^0,\lund^0)$. 
Define then the (uncurved version of the) structure that $\Wcal(T^*G)$ should have, according to Conjecture~\ref{conj:wrapped}.

\begin{defi}\label{def:lund_cat} An \emph{($f$-bialgebra) $d$-category} $\Acal$ consists in:
\begin{itemize}
\item A monoid of objects $\mathrm{Ob}(\Acal)$, 
\item For each pair of objects $L,L' \in \mathrm{Ob}(\Acal)$, a chain complex $\Acal (L,L')$,
\item A collection of operations satisfying a collection of relations, detailed below.
\end{itemize}

Let $\Lbb$ be a decoration of $(\kund, \lund)$, define 
\ea
\mathrm{In}(\Lbb) &= \bigotimes_{i=1}^{a} \bigotimes_{i'=1}^{k_i} \bigotimes_{j=1}^{b} \Acal (L^{i,i'}_{j,0}, L^{i,i'}_{j,l_j}) \\
 &= \bigotimes_{i=1}^{a} \bigotimes_{i'=1}^{k_i}  \Acal (\Lund^{i,i'})^{in}, \nonumber \\
\mathrm{Out}(\Lbb) &= \bigotimes_{i=1}^{a}  \bigotimes_{j=1}^{b} \bigotimes_{j'=1}^{l_j}\Acal (L^{i,\times}_{j,j'-1}, L^{i,\times}_{j,j'}) \\
 &= \bigotimes_{i=1}^{a} \Acal (\Lund^{i,\times})^{out}. \nonumber
\ea
The operations are then denoted 
\e
\alpha (\Lbb)^{\kund}_{\lund}\colon \mathrm{In}(\Lbb) \to \mathrm{Out}(\Lbb),
\e
or just $\alpha (\Lbb)$  or $\alpha^{\kund}_{\lund}$, depending on what is convenient or implicit.

Assume $\Lbb = \Lbb^1 \sharp \Lbb^0$, and notice that 
\ea
\mathrm{In}(\Lbb) &= \mathrm{In}(\Lbb^1) ,\\
\mathrm{Out}(\Lbb^1) &= \mathrm{In}(\Lbb^0) ,\\
\mathrm{Out}(\Lbb) &= \mathrm{Out}(\Lbb^0) .
\ea
Therefore  $\alpha (\Lbb^0)$ and  $\alpha (\Lbb^1)$ can be composed, and $\alpha (\Lbb^0)\circ \alpha (\Lbb^1)$ goes from $\mathrm{In}(\Lbb)$ to $\mathrm{Out}(\Lbb)$ just as $\alpha (\Lbb)$.

These operations are required to satisfy the obvious analogs of the $f$-bialgebra relations in Definition~\ref{def:f_bialg}, that we will denote
\e\nonumber
(R(\Lbb)^{\kund}_{\lund}),\ (T(\Lbb)^{a}_{b}),\ (V(\Lbb)^{\kund}_{b}),\ (V(\Lbb)^{a}_{\lund}),\ (D(\Lbb)^{\kund;i}_{\lund}),\ (D(\Lbb)^{\kund}_{\lund;i}).
\e
\end{defi}

\begin{defi}\label{def:kund_bimod_over_lund_cats} 
Let $\Acal$, $\Bcal$ be two $d$-categories. An \emph{$(\Acal,\Bcal)$ $u$-bimodule $d$-category} $\Mcal$ consists in
\begin{itemize}
\item A set of objects $\mathrm{Ob}(\Mcal)$, endowed with two commuting actions 
\ea
\mathrm{Ob}(\Acal) \times \mathrm{Ob}(\Mcal) \to \mathrm{Ob}(\Mcal), \\
\mathrm{Ob}(\Mcal) \times \mathrm{Ob}(\Bcal) \to \mathrm{Ob}(\Mcal).
\ea
\item For each pair of objects $L,L' \in \mathrm{Ob}(\Mcal)$, a chain complex $\Mcal (L,L')$,
\item A collection of operations satisfying a collection of relations, detailed below.
\end{itemize}
We first need to decorate $u$-bimodule indices. A \emph{decoration of} $(\kund, \lund) = (\kund^l |\epsilon|\kund^r, \lund)$ is a collection $\Lbb = (\Lbb^l | \Lund^\epsilon | \Lbb^r) =  \lbrace L^{i,i'}_{j,j'}\rbrace$, where:
\begin{itemize}
\item $\Lbb^l$ is a decoration of  $(\kund^l, \lund)$ by $\mathrm{Ob}(\Acal)$,
\item $\Lund^\epsilon$ is either empty if $\epsilon = 0$, or a decoration of $\lund$ by $\mathrm{Ob}(\Mcal)$ if $\epsilon=1$,
\item $\Lbb^r$ is a decoration of  $(\kund^r, \lund)$ by $\mathrm{Ob}(\Bcal)$.
\end{itemize}
As before, $\Lbb$ induces sub-decorations $\Lbb^0, \Lbb^1$ when splitting multi-indices, and the input and output spaces are 
\ea
\mathrm{In}(\Lbb) &= \bigotimes_{i=1}^{a} \bigotimes_{i'=1}^{k_i} \bigotimes_{j=1}^{b} \Xcal (L^{i,i'}_{j,0}, L^{i,i'}_{j,l_j}), \\
\mathrm{Out}(\Lbb) &= \bigotimes_{i=1}^{a}  \bigotimes_{j=1}^{b} \bigotimes_{j'=1}^{l_j}\Xcal (L^{i,\times}_{j,j'-1}, L^{i,\times}_{j,j'}), 
\ea
where $\Xcal$ stands either for $\Acal$, $\Mcal$ or $\Bcal$.

The operations are then denoted 
\e
\mu (\Lbb)^{\kund^l |\epsilon|\kund^r}_{\lund}\colon \mathrm{In}(\Lbb) \to \mathrm{Out}(\Lbb),
\e
or just $\alpha (\Lbb)$,  or $\alpha^{\kund}_{\lund}$, etc...

These operations are required to satisfy the obvious analogs of the $u$-bimodules relations in Definition~\ref{def:kund_bimod}.
\end{defi}

Finally, $d$-categories and their $u$-bimodules come with their functors:
\begin{defi}\label{def:fun_lund_cat} A \emph{functor between $d$-categories $\Fcal\colon \Acal \to \Bcal$} consists in
\begin{itemize}
\item a morphism of monoids $\Fcal_{\mathrm{ob}}\colon \mathrm{Ob}(\Acal) \to \mathrm{Ob}(\Bcal)$,
\item for decorations $\Lbb$ of $(\kund,\lund)$, maps $\Fcal(\Lbb)^{\kund}_{\lund} \colon \mathrm{In}(\Lbb) \to \mathrm{Out}(\Fcal_{\mathrm{ob}}(\Lbb))$ satisfying analogous relations of $f$-bialgebra morphisms, as in Definition~\ref{def:mph_f_bialg}.
\end{itemize}

\end{defi}

\begin{defi}\label{def:fun_kund_bimod_lund_cat} 
Let: 

\noindent\begin{minipage}{.2\textwidth}\begin{tikzcd}
   \Acal \ar{r}{\Fcal}& \Ccal \\
   \Mcal \ar{r}{\Phi} \ar[,loop ,out=123,in=57,distance=2.5em]{}{} \ar[,loop ,out=-123,in=-57,distance=2.5em]{}{}& \Ncal \ar[,loop ,out=123,in=57,distance=2.5em]{}{} \ar[,loop ,out=-123,in=-57,distance=2.5em]{}{}\\
   \Bcal \ar{r}{\Gcal}& \Dcal
\end{tikzcd}
\end{minipage}
\begin{minipage}{.7\textwidth}
\begin{itemize}
\item  $\Acal$, $\Bcal$, $\Ccal$, $\Dcal$ be four $d$-categories,
\item $\Fcal\colon \Acal \to \Ccal$ and $\Gcal\colon \Bcal \to \Dcal$ be functors of $d$-categories,
\item $\Mcal$, be  an $(\Acal, \Bcal)$  $u$-bimodule $d$-category.
\item  $\Ncal$ be  a $(\Ccal, \Dcal)$ $u$-bimodule $d$-category.
\end{itemize}
\end{minipage}

An \emph{$(\Fcal, \Gcal)$-equivariant functor between $u$-bimodules $d$-categories $\Phi\colon \Mcal \to \Ncal$} consists in
\begin{itemize}
\item a $(\Fcal_{\mathrm{ob}}, \Gcal_{\mathrm{ob}})$-equivariant map
$\Phi_{\mathrm{ob}}\colon \mathrm{Ob}(\Mcal) \to \mathrm{Ob}(\Ncal)$,

\item for decorations $\Lbb$ of $(\kund^l |\epsilon|\kund^r,\lund)$, maps $\Phi(\Lbb)^{\kund^l |\epsilon|\kund^r}_{\lund} \colon \mathrm{In}(\Lbb) \to \mathrm{Out}(\Phi_{\mathrm{ob}}(\Lbb))$ satisfying analogous relations of $u$-bimodules morphisms, as in Definition~\ref{def:mph_bimod}. Here, if $\Lbb = (\Lbb^l | \Lund^\epsilon | \Lbb^r)$, we denote $\Phi_{\mathrm{ob}}(\Lbb) = (\Fcal_{\mathrm{ob}}(\Lbb^l) | \Phi_{\mathrm{ob}}(\Lund^\epsilon)| \Gcal_{\mathrm{ob}}(\Lbb^r) )$.
\end{itemize}

\end{defi}

\bibliographystyle{alpha}
\bibliography{biblio}

\end{document}